\newtheorem{theorem}{Theorem}[section]
\newtheorem{corollary}[theorem]{Corollary}
\newtheorem{lemma}[theorem]{Lemma}
\newtheorem{proposition}[theorem]{Proposition}
\theoremstyle{remark}\newtheorem{definition}[theorem]{Definition}
\theoremstyle{remark}\newtheorem{remark}[theorem]{Remark}
\newcommand{\C}{\mathbb{C}}
\newcommand{\D}{\mathbb{D}}
\newcommand{\E}{\mathbb{E}}
\newcommand{\Q}{\mathbb{Q}}
\newcommand{\R}{\mathbb{R}}
\renewcommand{\P}{\mathbb{P}}
\renewcommand{\Re}{\mathrm{Re}}
\newcommand{\wt}{\widetilde}
\newcommand{\eps}{\varepsilon}
\newcommand{\1}{\mathbf{1}}
\newcommand{\DMSsph}[1]{ \mu_{\mathrm{DMS}}^{#1}  }
\newcommand{\DKRVsph}[1]{ \mu_{\mathrm{DKRV}}^{#1}  }
\def\cQ{\mathcal{Q}}
\DeclareMathOperator{\Var}{Var}
\DeclareMathOperator{\dist}{dist}
\begin{document}

	\author{\begin{tabular}{c}Juhan Aru
			\thanks{juhan.aru@math.ethz.ch. Department of Mathematics, ETH Z\"urich, R\"amistr. 101, 8092 Z\"urich, Switzerland
			}	
		\end{tabular}
		\and \begin{tabular}{c}
			Yichao Huang  \thanks{yichao.huang@ens.fr. Sorbonne Universities, UPMC Univ Paris 06 \& Department of Mathematics, ENS Paris, CNRS, PSL Research University, 45 Rue d'Ulm, 75005 Paris, France
			}
		\end{tabular}\and 
		\begin{tabular}{c}Xin Sun
			\thanks{xinsun89@math.mit.edu. Department of Mathematics, MIT, Cambridge, MA, 02139, U.S.
			} 
		\end{tabular}}
		\title{Two perspectives of the 2D unit area quantum sphere and their equivalence}
		\date{}
		\maketitle
		\begin{abstract}
			2D Liouville quantum gravity (LQG) is used as a toy model for 4D quantum gravity and is the theory of world-sheet in string theory. Recently there has been growing interest in studying LQG in the realm of probability theory: David, Kupiainen, Rhodes, Vargas (2014) and Duplantier, Miller, Sheffield (2014) both provide a probabilistic perspective of the LQG on the 2D sphere. In particular, in each of them one may find a definition of the so-called unit area quantum sphere. We examine these two perspectives and prove their equivalence by showing that the respective unit area quantum spheres are the same. This is done by considering a unified limiting procedure for defining both objects.
		\end{abstract}
		
		\section{Introduction}\label{sec:intro}
		2D Liouville quantum gravity (LQG) was first introduced by Polyakov in \cite{POLYAKOV1} as a framework for integrating over surfaces. On the one hand, this integration over surfaces can be used to describe the time-evolution of bosonic strings and on the other hand, it provides a model for a random metric with a fixed topology, i.e. for quantum gravity \cite{SEIBERG}. Whereas the theory for surfaces of non-trivial moduli remains to be understood, the basic theory and the constructions for the 2D sphere are well-known in the physics literature \cite{NAK}. In the realm of rigorous probability theory, however, even the understanding of the simplest case, i.e. the theory of the random metric on the 2D sphere is relatively recent. 
		
		Liouville quantum gravity on the sphere is the limiting object of natural discrete random planar map models on the sphere. This provides one way to define the limiting object as a random measure endowed metric space \cite{LeGall,Miermont}, however the conformal structure is lacking in this framework. 
		Indeed, as in the case of 2D Riemannian manifolds, this metric, although highly non-smooth, should nevertheless determine a conformal structure such that both the metric and the measure are described using a 2D Gaussian free field defined on this conformal structure \cite{POLYAKOV1, SEIBERG}. For now (except for the recent progress in the case of $\gamma = \sqrt{8/3}$, see \cite{QLE-TBM1,QLE-TBM2,QLE-TBM3}) one can only define the corresponding volume form, that takes roughly the form `$e^{\gamma h}dz$'  where $h$ is a 2D Gaussian free field type of distribution and $dz$ the volume element.
		In fact, two different constructions of the volume form corresponding to the unit area quantum sphere have recently appeared in the probability literature. 
		
		\cite{DKRV} rigorously constructed the Liouville quantum field theory on the sphere by proving that the partition function of the theory is indeed well-defined. As a consequence, \cite{DKRV} also provides an exact formulation of the Liouville measure on the sphere conditioned to have unit volume. Later \cite{disk,torus} generalized the construction to the disk and to the torus and in \cite{critical}, the authors constructed the Liouville quantum gravity on the sphere in a certain critical situation -- when the so-called Seiberg bound is saturated. In \cite{KRV15} the authors further verify that the conformal Ward and BPZ identities for Liouville quantum field theory can be derived in the probabilistic framework. This is an important step in their project of unifying the path integral approach and the conformal bootstrap approach of Liouville conformal field theory. An important feature of this approach is the use of Gaussian multiplicative chaos theory and its strong link to the original physics literature.
		
		Another approach stems from \cite{Zipper}, where the author suggested a limiting procedure involving the Gaussian free field.  Following up this work, in \cite{mating}, the authors provided a more concrete construction via Bessel processes and showed that it is equivalent to the limiting procedure suggested in \cite{Zipper}. \cite{mating} also rigorously constructed objects like quantum disks, quantum wedges and  quantum cones. In \cite{mating2}, the authors provide further constructions of the quantum sphere. In particular, they show that the unit area quantum sphere can be obtained from mating a pair of correlated continuum trees of finite diameter, which are given by space filling SLE curves. This exemplifies an important aspect of their approach: the interplay between Gaussian free field and Schramm-Loewner evolution, already known to be linked to conformal field theories \cite{CARDY}.
		
		The \cite{DKRV} approach considers the so-called Liouville measure directly in the space of random measures whereas in the \cite{mating} approach one defines the quantum sphere as an equivalence classes of random measures. On one hand, the \cite{DKRV} approach is more explicit. On the other hand, the notion of equivalence class enables \cite{mating} to work with one or two marked points whereas the framework of \cite{DKRV} is restricted to 3 or more points.
		
		Both approaches provide evidence that they give the correct scaling limit of the random planar maps weighted by critical statistical mechanics models with 3 uniformly chosen marked points, (see \cite[Section 5.3]{DKRV} and \cite[Section 1.2]{mating2},\cite{loop2}). Following the notions preferred by the original authors, the candidate via the \cite{DKRV} approach is called the \emph{unit volume Liouville measure with three insertion points of weight $\gamma$}. The candidate via the \cite{mating} approach is called the \emph{unit area quantum sphere with three marked points}. A natural question is whether the two constructions actually agree. The main aim of this article is to give an affirmative answer to this question (Theorem~\ref{thm:main}). On the way we also revisit the two perspectives in some detail and provide a unified framework to work with both of them.
		
		We finally remark that there is yet a third mathematical approach to defining a unit area quantum sphere in the case when $\gamma=\sqrt{8/3}$ (the so-called pure gravity). In this case, the unit area quantum sphere can be defined as the scaling limit of a large class of random planar maps including uniform triangulations, uniform quadrangulations etc. Indeed, it has been shown that these random planar maps converge as metric spaces to the Brownian map in the Gromov-Hausdorff topology \cite{LeGall,Miermont}. Moreover, \cite{QLE,QLE-TBM1} recently announced that there is a canonical way of putting a metric on the unit area quantum sphere with $\gamma=\sqrt{8/3}$ via the quantum Loewner evolution to obtain the Brownian map, thus proving the equivalence between the Brownian map and the unit area quantum sphere defined in \cite{mating}. Our result  relates \cite{DKRV} to this body of works, by showing that the unit area quantum sphere can be constructed also via their approach.
		
		\bigskip
		
		\noindent \textbf{Acknowledgements}
		We are grateful to Scott Sheffield for suggesting this problem and helpful discussions. We are also thankful to Fran\c cois David, Bertrand Duplantier, Jason Miller, R\'emi Rhodes and Vincent Vargas for useful discussions. We thank the anonymous referee and Bertrand Duplantier for their careful reading and many helpful comments on the manuscript. The authors would like to thank the Isaac Newton Institute for Mathematical Sciences, Cambridge, for support and hospitality during the program \emph{Random Geometry} where most of the work on this paper was undertaken. J. Aru acknowledges support of the SNF grant SNF-155922, Y. Huang would like to thank the support from the Cambridge-PSL French embassy fund and X. Sun was partially supported by NSF grant DMS-1209044.
		
		\subsection{Outline and main results}
		We will revisit  the two constructions of the unit area quantum sphere in detail in Section~\ref{sec:two-pers}. Here we first give a brief description in order to state our main theorem.
		
		In \cite{DKRV}, given an integer $k\ge 3$, $z_1,\dotsm,z_k\in \C\cup\{\infty\}$ and $\alpha_1\dotsm,\alpha_k\in \R$ with $\alpha_1,\dotsm,\alpha_k$ satisfying certain bounds, one can use the Gaussian multiplicative chaos theory to construct the so-called Liouville measure on the sphere with log singularity $\alpha_i$ at $z_i$. Then by conditioning on the quantum area to be 1, one obtains a Liouville measure with unit volume.  We denote the law of unit volume Liouville measures obtained in this way by $\DKRVsph{\alpha_1,\dotsm,\alpha_k}$. The detailed construction will be provided in Section~\ref{Description:DKRV}.
		
		In \cite{mating}, the construction of the unit area quantum sphere  is based on the notion of quantum surfaces --- an equivalence class of random distributions with a number of marked points. In Section~\ref{Description:DMS14} we revisit the Bessel process construction of the unit area quantum sphere with 2 marked points, whose law is denoted by $\DMSsph{2}$.  The unit area quantum sphere with  $k\ge 3$  marked points (denoted by  $\DMSsph{k}$) can be obtained by first sampling according to $\DMSsph{2}$ and then sampling $k-2$ points according to the quantum area independently of each other. 
		
		The following is the main theorem of the paper:
		\begin{theorem}\label{thm:main} 
			For $\gamma\in (0,2)$,	
			$$\DMSsph{3}=\DKRVsph{\gamma,\gamma,\gamma}.$$ 
			More precisely, if we embed  $\DMSsph{3}$ such that the three marked points are fixed at $z_1,z_2,z_3\in \C\cup \{\infty\}$, then it has the same distribution as $\DKRVsph{\gamma,\gamma,\gamma}$ with marked points of weight $\gamma,\gamma,\gamma$ at $z_1, z_2, z_3$.
		\end{theorem}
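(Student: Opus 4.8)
The plan is to realize both laws on the infinite cylinder $\cC = \R \times [0,2\pi]$, which is conformally the twice-punctured sphere, sending two of the three marked points to the two ends $\pm\infty$ and keeping the third at a finite location. On $\cC$ I decompose each field into its projection $(A_s)_{s\in\R}$ onto functions that are constant on the vertical circles $\{s\}\times[0,2\pi]$ --- the circle-average (radial) process --- and the orthogonal lateral field $h^{\perp}$ having mean zero on every circle. In both constructions these two components are independent, and the lateral field is exactly the lateral part of the cylinder Gaussian free field, so it has the same law on the two sides. Hence I would reduce the theorem to two statements: (i) that $\DMSsph{2}$ coincides with the DKRV sphere carrying weight-$\gamma$ insertions at the two ends and conditioned to have unit area, a purely radial matching once the common lateral field is stripped off; and (ii) a rooting identity saying that sampling one further point from the unit-area measure turns the two-insertion object into the three-insertion object, i.e. $\DMSsph{3}$ into $\DKRVsph{\gamma,\gamma,\gamma}$.

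For (i), on the DMS side the radial process $(A_s)$ is built, by construction, from a Bessel process of the dimension fixed by $\gamma$; equivalently it is a two-sided Brownian motion with drift $-(Q-\gamma)$, where $Q=\frac{\gamma}{2}+\frac{2}{\gamma}$, reparametrized and shaped through a Williams-type excursion decomposition so that its decay at $\pm\infty$ reflects the weight-$\gamma$ insertions at the two ends. On the DKRV side the circle average of the Gaussian free field is itself a two-sided Brownian motion; the two weight-$\gamma$ insertions and the background curvature add deterministic drifts which near each end equal $\gamma-Q=-(Q-\gamma)$, matching the DMS drift, while the constant (zero) mode is not integrated out but pinned by the unit-area constraint. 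I emphasize that, because one conditions on unit area, only the Seiberg bound $\gamma<Q$ --- valid for every $\gamma\in(0,2)$ --- is needed, and the bound $\sum_i\alpha_i>2Q$ required for the unconditioned partition function is irrelevant here. The core of the proof is then to identify these two shaped, drifted Brownian laws. I would do this through the classical correspondence between a Brownian motion with drift, decomposed at its maximum via Williams' time-reversal theorem, and Bessel processes, combined with a Girsanov change of measure that converts the insertion drifts into the Bessel dimension. Since the unit-area event has probability zero, the identification must be carried out through the unified limiting (disintegration) procedure advertised in the introduction: regularize, weight by the event that the total Gaussian multiplicative chaos mass is near $1$, and pass to the limit, checking that both constructions yield the same limiting radial law.

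For (ii), sampling the extra point from the unit-area measure of $\DMSsph{2}$ does not reweight the field, the area measure already being a probability measure. On the DKRV side, adding a weight-$\gamma$ insertion at a point and integrating its location against the background measure is, by the Gaussian multiplicative chaos rooting (Girsanov) identity, exactly the operation of sampling a point from the $\gamma$-area measure; the two operations agree precisely because the insertion weight equals $\gamma$. Applying this to pass from two to three insertions, and then using the manifest exchangeability of both three-pointed objects in their three marked points together with the conformal covariance of the two constructions --- which lets me fix the embedding so that the marked points sit at prescribed $z_1,z_2,z_3\in\C\cup\{\infty\}$ --- yields the stated equality $\DMSsph{3}=\DKRVsph{\gamma,\gamma,\gamma}$.

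The hard part will be step (i): pinning down the precise limiting scheme under which the Bessel-type DMS radial process and the unit-area-conditioned, zero-mode-pinned DKRV circle-average process converge to one and the same law, and verifying that every normalizing constant --- relating $Q$, the insertion weight $\gamma$, the drift $Q-\gamma$, and the Bessel dimension --- lines up exactly. The Williams time-reversal step and the bookkeeping in the Girsanov and rooting computations are the technically delicate pieces; once the radial laws are matched and the lateral independence is in hand, assembling the full equality, including the third marked point, is routine.
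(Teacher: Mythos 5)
The central gap is in your step (i): the object you propose to match $\DMSsph{2}$ against --- ``the DKRV sphere carrying weight-$\gamma$ insertions at the two ends and conditioned to have unit area'' --- does not exist inside the DKRV framework, and your claim that only the bound $\gamma<Q$ is needed once one conditions on unit area is incorrect. The unit-volume DKRV measure is defined by reweighting by $\mu_{h_L}(\C)^{-s/\gamma}$ with $s=\sum_i\alpha_i-2Q$; for two insertions of weight $\gamma$ this exponent is $\frac{2}{\gamma}(Q-\gamma)$, and the condition \eqref{eq:new-Seiberg} (equivalently the moment criterion of Lemma~\ref{lem:moments}) would require $Q-\gamma<\frac{2}{\gamma}\wedge(Q-\gamma)$, which fails identically for every $\gamma\in(0,2)$. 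This is precisely why the paper remarks that the Seiberg-type bounds force $k\ge 3$, and why on the DMS side the two-point object is only an \emph{infinite} measure built from Bessel excursion theory. So ``conditioning on unit area'' in your step (i) means conditioning an infinite measure on a null event, which must itself be constructed through a limiting procedure. Relatedly, your reduction to ``a purely radial matching once the common lateral field is stripped off'' does not survive this conditioning: the total Gaussian multiplicative chaos mass depends jointly on the radial and lateral parts, so conditioning on (near-)unit area destroys both their independence and the claim that the lateral part still has the law of the GFF lateral component; the factorization you invoke holds only for the \emph{unconditioned} infinite measures. Consequently, everything you defer to ``the hard part'' --- the choice of regularization, the uniform moment bounds needed for dominated convergence, and the verification that the unit-area conditioning commutes with both the limit and the rooting operation of step (ii) --- is not bookkeeping; it is the entire proof, and the Williams/Girsanov radial identification alone cannot deliver it.

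For comparison, the paper avoids the two-point DKRV object altogether. Theorem~\ref{thm:3pt} constructs $\DKRVsph{\gamma,\gamma,\gamma}$ directly as a limit of a zero-boundary GFF on $\eps^{-1}\D$ with two interior $\gamma$-insertions and a tuned additive constant, conditioned to have mass in $[e^{-\gamma\delta},e^{\gamma\delta}]$; the Lebesgue zero mode of the DKRV definition emerges from a Girsanov tilt of the circle average $A_\eps$ together with Fubini identities and the moment bounds of Corollary~\ref{cor:moments} and Lemma~\ref{lem:continuity}. Then Theorem~\ref{thm:main'} runs the DMS recipe (sample the third point from the area measure of a near-unit-mass one-insertion field, Proposition~\ref{prop:2pt}) and the DKRV recipe (the same, but under the law reweighted by total mass, Proposition~\ref{prop:3pt}) through this \emph{same} approximation; on the near-unit-mass event the reweighting factor is $1+o_\delta(1)$ in total variation, so the two limits coincide. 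The rooting/Girsanov identity you invoke in step (ii) does appear there, via the rooted-measure representation \eqref{eq:root}, but only after the problem has been reduced to well-defined probability measures and the location of the sampled point has been controlled (Lemma~\ref{lem:location}). Your proposal would need an analogous reduction before either of your two steps can even be stated precisely.
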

		\begin{remark}
			Whereas we state the equivalence in terms of measures, it also holds in terms of underlying fields and thus underlying quantum surfaces. Indeed, in \cite{measure-field}, it is shown that a Gaussian free field type field and the Liouville measure it induces determine each other. One can check that this result also applies in the current context.
		\end{remark}
		\begin{remark}
			Notice that
			$\DMSsph{4}\neq\DKRVsph{\gamma,\gamma,\gamma,\gamma}.$ In fact, according to the definition, the cross ratio of the 4 marked points of $\DMSsph{4}$ is a non-trivial random variable. On the side of $\DKRVsph{\gamma,\gamma,\gamma,\gamma}$, however, the cross ratio is simply the cross ratio of the four given points $z_1,z_2,z_3, z_4$. For the same reason Theorem~\ref{thm:main} is not true for any $k>3$. The value $k=3$ is special because on the one hand it fixes a conformal structure on the sphere, but on the other hand all spheres with three marked points remain conformally equivalent. 
			
			On the one hand, the area measure of random planar quadrangulations with $n$ faces, with each face carrying area $1/n$ and with four marked points chosen independently from the area measure, should conjecturally converge to $\DMSsph{4}$ in the limit $n \rightarrow \infty$. On the other hand, if one first embeds random planar quadrangulations with $n$ faces, with each face carrying area $1/n$ and with three marked points, to the Riemann sphere with three marked points $z_1, z_2, z_3$ using say circle-packing embedding, and then adds a suitable singularity at a fixed point $z_4$ (see \cite{DKRV} section 5.3), one expects to obtain in the $n \rightarrow \infty$ limit the random  measure $\DKRVsph{\gamma,\gamma,\gamma,\gamma}$ with four marked points at $z_1, z_2, z_3, z_4$.
		\end{remark}
		
		The idea of the proof is to approximate $\DKRVsph{\gamma,\gamma,\gamma}$ and $\DMSsph{3}$ using essentially the same limiting procedure. In Section~\ref{sec:3pt} we will provide an approximation  scheme for $\DKRVsph{\gamma,\gamma,\gamma}$ which is very close to the one in \cite[Section 5.3]{mating}. In Section~\ref{sec:equi} we will show that a slight perturbation of the scheme in Section~\ref{sec:3pt} leads to both  $\DKRVsph{\gamma,\gamma,\gamma}$ and $\DMSsph{3}$, thus concluding the proof.
		
		\section{Two perspectives of the unit area quantum sphere}\label{sec:two-pers}
		In this section we first provide necessary background on the whole-plane Gaussian free field and then describe the two perspectives in some more detail.
		
		\subsection{The whole plane Gaussian free field}\label{subsec:whole}
		As in \cite{GFF}, the whole plane GFF $h$ is random distribution module an additive constant satisfying
		\[
		\Var[(h,\phi)]=-\int  \phi(x)\log|x-y|\phi(y)dxdy, \quad \forall \phi\in C^\infty_0(\R^2)\; \textrm{s.t.}\;\int\phi(x)dx=0.
		\]
		In other words, it is a Gaussian process only indexed by zero mean test functions. 
		
		One can also make $h$ into an honest random distribution (i.e. a scalar field) by pinning it: namely, we choose some finite measure $\rho(z)dz$ on $\C\cup\{\infty\}$ satisfying $\int\int|\rho(x)\log|x-y| \rho(y)|dxdy < \infty$ and set the average $(h,\rho)=\int h(z)\rho(z)dz$ to be zero, i.e. to pin the field using $\rho$. We sometimes call $\rho(z)dz$ the background measure. Then all the other averages $(h, \rho')$ are defined by choosing $c>0$ such that $c\rho$ has the same mass as $\rho'$ and setting $(h, \rho') := (h, \rho' - c\rho)$.
		
		If $\rho$ has unit mass, then the Green's function of such a field is given by \cite{DKRV}:
		\begin{equation}\label{def:green}
		G_{\rho}(z,w)=\log \frac{1}{|z-w|} - m_\rho(\log \frac{1}{|w-\cdot|}) - m_\rho(\log \frac{1}{|z-\cdot|}) + \theta_\rho,
		\end{equation}
		where we set $m_\rho(f) := \int_{\R^2} f(z)\rho(z)dz$ and $\theta_\rho = -\int\int_{\R^2 \times \R^2}\rho(z)\log|z-w|\rho(w)dwdz$.
		
		We denote the whole-plane GFF pinned using $\rho$ by $h_\rho$, its probability measure by $\P_\rho$ and the corresponding expectation operator by $\E^{\P_\rho}$. For a different background measure the scalar whole plane GFF differs by a random constant. No choice of background measure makes the field truly conformal invariant -- conformal transformations change the pinning of the field. \cite{DKRV} circumvents this by tensoring the whole plane GFF with the Lebesgue measure on $\R$ to obtain a Mobius invariant, but infinite measure. We will see the precise meaning in the next subsection. 
		
		One of the natural choices for the background measure is the normalized area measure of the spherical metric $\hat g(x)=\pi^{-1}(1+|x|^2)^{-2}$ of total mass $1$. More explicitly, we define $h_{\hat g}$ to be the random distribution with covariance given by \cite[Equation (2.12)]{DKRV}:
		\[
		G_{\hat g}(x,y):=\log\frac{1}{|x-y|}-\frac{1}{4}(\log \hat g(x)+\log\hat{g}(y) )-\frac{1}{2}.
		\]
		
		We refer to \cite{IG-IV} for more information on whole plane Gaussian free field. For later purpose we record \cite[Proposition 2.10]{IG-IV} stating that one can approximate the whole-plane GFF using Dirichlet free fields on a sequence of growing domains:
		\begin{proposition}\label{prop:whole-plane}
			Suppose $D_n$ is a sequence of growing domains with harmonically non-trivial boundary containing $0$ s.t. $\dist(0,\partial D_n) \rightarrow \infty$. In each $D_n$ we define a Dirichlet GFF $h_n$ with uniformly bounded boundary values. Then on any fixed bounded domain $D$, the restrictions of $h_n$ to $D$ converge in total variational distance to the restriction of the whole plane GFF to $D$, seen as a distribution modulo additive constant.
		\end{proposition}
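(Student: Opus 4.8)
The plan is to reduce the statement to a comparison of the \emph{harmonic parts} of the fields via the Markov property, and then to upgrade convergence of covariance kernels to convergence in total variation using the theory of equivalence of Gaussian measures. First I would fix the bounded domain $D$ and choose a slightly larger domain $D'$ with $\overline{D}\subset D'$ and $\overline{D'}\subset D_n$ for all large $n$. By the Markov property of the Dirichlet GFF, decompose $h_n = \mathring{h}_n + \varphi_n$ on $D'$, where $\mathring{h}_n$ is a zero-boundary GFF on $D'$ and $\varphi_n$ is the harmonic extension into $D'$ of the boundary values of $h_n$ on $\partial D'$, independent of $\mathring{h}_n$. The crucial observation is that $\mathring{h}_n$ has the same law for every $n$ and coincides with the corresponding part in the analogous decomposition of the whole-plane GFF on $D'$. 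Hence, after quotienting out the additive constant, the restriction of $\mathring{h}_n$ to $D$ contributes an identical independent convolution factor to the law of each $h_n|_D$ and to that of the whole-plane restriction. Since convolution is a contraction for the total variation norm (also on the quotient group), it suffices to prove that the laws of the harmonic parts $\varphi_n$, restricted to $D$ and viewed modulo constants, converge in total variation to the law of the whole-plane harmonic part $\varphi_\infty$.

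Next I would dispose of the deterministic boundary values. The bounded boundary data contribute to $\varphi_n$ a deterministic harmonic function $u_n$ on $D'$ with $\|u_n\|_\infty \le M$ uniformly in $n$. Interior gradient estimates for harmonic functions give $|\nabla u_n(x)| \le C M/\dist(x,\partial D_n)$ on $D$, so $u_n$ converges uniformly on $D$ to a constant as $\dist(0,\partial D_n)\to\infty$. Thus, modulo additive constants, the deterministic contribution is negligible, and I may assume the boundary values vanish, i.e.\ that $h_n$ is the zero-boundary Dirichlet GFF on $D_n$ and $\varphi_n$ is a \emph{centered} Gaussian harmonic field. Its covariance on $D'$ is $C_n(x,y) = G_{D_n}(x,y) - G_{D'}(x,y)$, harmonic in each variable. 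Writing $G_{D_n}(x,y) = \log\frac{1}{|x-y|} + H_n(x,y)$ with $H_n$ harmonic, the far-away boundary forces $H_n$ to converge, locally uniformly and modulo an additive constant, to the whole-plane harmonic correction; consequently $C_n \to C_\infty$ in $C^\infty_{\mathrm{loc}}(D'\times D')$ modulo constants, where $C_\infty$ is the covariance of the whole-plane harmonic part. Because the limiting kernels are smooth and $\overline{D}\subset D'$, this local-uniform convergence upgrades to convergence of the associated covariance operators, acting on the Hilbert space of mean-zero distributions supported in $D$, in Hilbert--Schmidt (indeed trace) norm.

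The main obstacle is the final step: passing from convergence of covariances to convergence in total variation, which is much stronger than weak convergence. Here I would invoke the Feldman--Hajek theory: two centered nondegenerate Gaussian measures on a Hilbert space are either mutually singular or equivalent, and in the equivalent case the total variation distance is controlled by the Hilbert--Schmidt distance between the normalized covariance operators $K_\infty^{-1/2} K_n K_\infty^{-1/2} - I$. The delicate points are (i) working on the quotient by constants, which requires fixing a pinning that removes exactly the constant mode and checking that both laws are nondegenerate Gaussians sharing a common Cameron--Martin space, and (ii) obtaining uniform control of the high harmonic modes so that the Hilbert--Schmidt convergence above indeed yields $\|K_\infty^{-1/2} K_n K_\infty^{-1/2} - I\|_{HS}\to 0$. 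A convenient way to handle (ii) is to diagonalize in a fixed orthonormal basis of harmonic functions on $D'$, truncate to the first $N$ modes, bound the tail uniformly in $n$ using smoothness and the interior estimates, and then let $N\to\infty$. Combining this Gaussian total-variation estimate for the harmonic parts with the contraction reduction of the first step yields the claimed convergence.
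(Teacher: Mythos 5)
First, a point of comparison: the paper does not prove this proposition at all --- it is imported verbatim as \cite[Proposition 2.10]{IG-IV} --- so your attempt can only be measured against the Miller--Sheffield argument and on its own merits. Your skeleton (Markov-property decomposition over an intermediate domain $D'$, cancellation of the common zero-boundary part by total-variation contraction under convolution, reduction to the harmonic parts, then a Gaussian comparison) is indeed the natural and standard route. But the last step, where all the actual work lies, has a genuine gap: the information you retain --- convergence $C_n\to C_\infty$ of the harmonic-part covariances in $C^\infty_{\mathrm{loc}}(D'\times D')$, plus ``smoothness and interior estimates'' on $D'$ to control the tail of the normalized perturbation $K_\infty^{-1/2}K_nK_\infty^{-1/2}-I$ --- is provably insufficient to conclude total-variation convergence. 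Take $D=B(0,R)$, $D'=B(0,2R)$. The limiting covariance (mod constants) is $C_\infty(x,y)=-\log|1-x\bar y/(4R^2)|=\sum_{j\ge1}\tfrac1j \Re\bigl((x\bar y)^j\bigr)(4R^2)^{-j}$, diagonal in circular harmonics with $j$-th mode variance $1/j$ in the units of $\psi_j(z)=\Re\,(z/(2R))^j$. Now let $C_n=C_\infty+\psi_{j_n}\otimes\psi_{j_n}$ with $j_n\to\infty$: each $C_n$ is the covariance of a centered Gaussian field a.s.\ harmonic on $D'$, and $C_n\to C_\infty$ in $C^\infty_{\mathrm{loc}}(D'\times D')$ since $\sup_{|z|\le \rho}|\psi_j|=(\rho/2R)^j$; yet the variance of the $j_n$-th cosine coefficient (a measurable functional of the field on $D$, well defined mod constants) is multiplied by $1+j_n\to\infty$, so the total variation distance between the laws on $D$ tends to $1$, not $0$. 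The reason is structural: the eigenvalues of $K_\infty$ decay geometrically at the fixed rate $(R/2R)^{2j}$, so the normalization amplifies the $(j,k)$ entry of any perturbation by roughly $\sqrt{jk}\,2^{j+k}$, while kernel bounds on the fixed $D'$ can only give decay $(R/\rho)^{j+k}$ with $\rho<2R$ --- a losing race. Your truncation-plus-tail plan cannot close point (ii) with the tools you allow yourself.

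The repair is hiding in your own computation, and it is exactly the quantitative mechanism the hypothesis $\dist(0,\partial D_n)\to\infty$ exists to provide. Modulo constants, the difference kernel is $C_n-C_\infty=G_{D_n}(x,y)+\log|x-y|$ --- the two $G_{D'}$ contributions cancel --- so it is harmonic in each variable on \emph{all} of $D_n\times D_n$, and after subtracting its values at a base point it tends to $0$ on compacts and is bounded on $B(0,r_n)^2$ with $r_n=\dist(0,\partial D_n)/2\to\infty$ (maximum principle, comparing boundary values $\log|x-y|$ at far-away $y\in\partial D_n$). Cauchy-type estimates on the ball of radius $r_n$ then bound its $(j,k)$ circular-harmonic coefficient at scale $R$ by $O\bigl((R/r_n)^{j+k}\bigr)$, which beats the amplification $\sqrt{jk}\,2^{j+k}$ and delivers the Hilbert--Schmidt convergence Feldman--Hajek requires. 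The same remark applies to your treatment of the deterministic boundary data: uniform convergence of $u_n$ to a constant on $D$ does not make a drift negligible in total variation (that requires smallness in the Cameron--Martin norm of the limit, which again weights the $j$-th mode by $\sqrt{j}\,2^{j}$); here too one must use that $u_n$ is harmonic and bounded by $M$ on all of $D_n$, so its mode coefficients are $O\bigl(M(CR/r_n)^j\bigr)$ and the Cameron--Martin norm vanishes in the limit. With these two corrections the proof goes through and is essentially the Miller--Sheffield argument; as written, the decisive ingredient is absent and the stated tools cannot supply it.
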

		
		\subsection{David-Kupiainen-Rhodes-Vargas' approach}\label{Description:DKRV}
		In this section we provide a detailed description of the construction of the unit volume Liouville measure in \cite{DKRV}. We identify the sphere $\mathbb S^2$ with $\C\cup \{\infty\}$ via the stereographic projection map. Before going into details of the construction, we remark that in order to understand the Definition~\ref{def:DKRV-def} of the unit volume Liouville measure, one only needs \eqref{eq:Liouville-field}, \eqref{eq:zgamma}, \eqref{eq:new-Seiberg} and the definition of the Gaussian multiplicative chaos measure, as explained between \eqref{eq:partition-function} and \eqref{eq:zgamma}.
		
		For the spherical metric $\hat g$, the first step in constructing the Liouville field on $\C\cup\{\infty\}$ with marked points at $z_1,\dots,z_k$ and log-singularities $\alpha_i$ at $z_i$ is to consider the whole-plane GFF with a shift term corresponding to the curvature of the metric and additional singularities corresponding to the marked points:
		
		\begin{equation}\label{eq:Liouville-field}
		h_L(z)=h_{\hat g}(z)+\frac{Q}{2}\log \hat g(z)+ \sum_{i=1}^{k}\alpha_iG_{\hat g}(z,z_i).
		\end{equation}
		Here $Q = 2/\gamma + \gamma /2$ with $\gamma \in (0,2)$ a fixed parameter. Now, given the so-called cosmological constant $\hat\mu>0$ and the parameter $\gamma \in (0,2)$, we define the partition function for the Liouville field, motivated by the physics literature. For any bounded continuous functional $F$ acting on $H^{-1}(\C)$ we set:
		\begin{equation}\label{eq:partition-function}
		\Pi^{(z_i,\alpha_i)_i}_{\gamma,\hat\mu}  (F)=e^{C(z)} \prod_{i}\hat g(z_i)^{\Delta_{\alpha_i}}\int_{\R} e^{sc} \E^{\P_{\hat g}}[F(c+h_L)\exp(-\hat\mu e^{\gamma c}\mu_{h_L}(\C) ) ]dc.
		\end{equation}
		Here, for a fixed parameter $\gamma$, $\mu_{h_L}(\cdot)$ denotes the Gaussian multiplicative chaos measure constructed from the log-correlated field $h_L(z)$. Shortly, it is the limit in probability of the measures $\mu_{h_L}^\eps(\cdot)$ defined by setting for any Borel $A \subset \C$, $\mu_{h_L}^\eps(A) = \int_A \eps^{\frac{\gamma^2}{2}}\exp(\gamma h_L^\eps(z))dz$, where $h_L^\eps$ is, for example, the circle-average approximation of $h_L$. See e.g. \cite{DKRV}) for more details. Moreover, we set
		\begin{align}\label{eq:zgamma}
		& s=\sum_{i=1}^k \alpha_i -2Q, \\
		\label{eq:mugamma}
		& C(z)=\frac{1}{2}\sum_{i\neq j} \alpha_i\alpha_j G_{\hat g}(z_i,z_j) +\frac{\theta_{\hat g}+\log 2}{2}\sum_i \alpha^2_i,\nonumber\\
		& \Delta_\alpha= \frac{\alpha}{2}(Q-\frac{\alpha}{2})\nonumber.
		\end{align}
		Finally, the integration over $c$ corresponds to the tensoring of the free field with the Lebesgue measure mentioned above.
		
		In \cite[Theorem 3.2]{DKRV} it is proved that the partition function \eqref{eq:partition-function} is non-trivial, finite and can be approximated by a circle-average regularization procedure if and only if $\alpha_1,\dotsm,\alpha_k\in \R$ satisfy the following Seiberg bounds:
		\begin{align*}
		&\sum^{k}_{i=1}\alpha_i >2Q\quad\text{(first Seiberg bound);}\\
		&\alpha_i<Q,\;\forall i \quad\text{(second Seiberg bound).}
		\end{align*}
		
		We remark that the two Seiberg bounds yield $k\ge 3$. This suits well with the need of three marked points for fixing the conformal structure of the sphere. 
		
		Under these constraints the partition function gives rise to a probability measure on $H^{-1}(\C)$, called the \emph{Liouville field}. Its law is denoted by $\P^{\gamma,\hat\mu}_{(z_i,\alpha_i,\hat g)}$ and it is described by setting:
		
		\begin{equation}\label{eq:field}
		\E^{\gamma,\hat\mu}_{(z_i,\alpha_i,\hat g)}[F] = \frac{\Pi^{(z_i,\alpha_i)_i}_{\gamma,\hat\mu}  (F)}{\Pi^{(z_i,\alpha_i)_i}_{\gamma,\hat\mu}  (1)}.
		\end{equation}for all bounded continuous functionals $F$ on $H^{-1}(\C)$.
		
		Via this partition function and again using the theory of Gaussian multiplicative chaos, the Liouville field also induces a corresponding Liouville measure $M(\cdot)$ on the sphere by specifying for all bounded continuous functional $F$ on $\R^n_+$ and all Borel sets $A_1,\dotsm,A_n$ in $\C$:
		\begin{align*}
		&\E^{\gamma,\hat\mu}_{(z_i,\alpha_i,\hat g)}[F(M(A_1),\dotsm,M(A_n)]\\
		=&\frac{
			\int_{\R} e^{sc} \E^{\P_{\hat g}}[F(e^{\gamma c}\mu_{h_L}(A_1),\dotsm,e^{\gamma c}\mu_{h_L}(A_n))\exp(-\hat\mu e^{\gamma c}\mu_{h_L}(\C) ) ]dc
		}{\int_{\R} e^{sc} \E^{\P_{\hat g}}[\exp(-\hat\mu e^{\gamma c}\mu_{h_L}(\C) ) ]dc
	}.
	\end{align*}
	Using a change of variables $e^{\gamma c}\mu_{h_L}(\C)=y$ we can rewrite this as follows:
	\begin{align*}
	&\E^{\gamma,\hat\mu}_{(z_i,\alpha_i,\hat g)}[F(M(A_1),\dotsm,M(A_n))]\\
	= &\frac{
		\int_0^\infty \E^{\P_{\hat g}}\left[F(y\frac{\mu_{h_L}(A_1)}{\mu_{h_L}(\C)},\dotsm,y\frac{\mu_{h_L}(A_n)}{\mu_{h_L}(\C)})\mu_{h_L}(\C)^{-\frac{s}{\gamma}} \right] e^{-\hat\mu y}y^{\frac{s}{\gamma}-1}dy
	}{\hat\mu^{\frac{s}{\gamma}} \Gamma(\frac{s}{\gamma}) \E^{\P_{\hat g}}\left[\mu_{h_L}(\C)^{-\frac{s}{\gamma}}\right]
}.\end{align*}
Thus the Liouville measure on the sphere can be in fact described as follows:
\begin{enumerate}
	\item $M(\C)$ is independent of the normalized measure $\bar M(\cdot):=M(\cdot)/M(\C)$;
	\item $M(\C)$ is distributed as Gamma distribution $\Gamma(\frac{s}{\gamma},\hat\mu)$;
	\item The normalized measure $\bar M(\cdot)$ can be sampled by the normalized measure of $\mu_{h_L}$  re-weighted by $\mu_{h_L}(\C)^{-\frac{s}{\gamma}}$. Namely, write $\bar\mu_{h_L}(\cdot):= \mu_{h_L}(\cdot) /\mu_{h_L}(\C)$ then
	\begin{equation}\label{eq:normalized-measure}
	\E^{\gamma,\hat\mu}_{(z_i,\alpha_i,\hat g)}[F(\bar M(\cdot))] 
	= \frac{\E^{\P_{\hat g}}\left[F(\bar \mu_{h_L}(\cdot)) \mu_{h_L}(\C)^{-\frac{s}{\gamma}}\right]
	}{\E^{\P_{\hat g}}\left[\mu_{h_L}(\C)^{-\frac{s}{\gamma}}\right]}.
	\end{equation}
\end{enumerate}
Notice that from this explicit structural description of the Liouville field, we see that the cosmological constant $\hat\mu$ only effects the total volume of the Liouville measure of the sphere, while the normalized measure is independent of $\hat\mu$. It is natural to then take the reweighted normalized measure $\bar \mu_{h_L}(\cdot)$ as the definition of the unit volume Liouville measure with specific log-singularities. As stated in
\cite[Lemma 3.10]{DKRV}, for (\ref{eq:normalized-measure}) to make sense, we only need (see Lemma~\ref{lem:moments}):
\begin{equation}\label{eq:new-Seiberg}
Q-\frac{\sum_{i=1}^{k}\alpha_i}{2}<\frac{2}{\gamma}\wedge \min_i(Q-\alpha_i)\quad \textrm{and}\quad \alpha_i<Q, \,\forall i.
\end{equation}
Now we are ready to give a formal definition of $\DKRVsph{\alpha_1,\cdot,\alpha_k}$. The expression for the Liouville field comes from the same change of variables as above:

\begin{definition}\label{def:DKRV-def}
	Fix $\gamma \in (0,2)$. Let $k\ge 3$, $ z_1,\dotsm,z_k $ be distinct points in $\C\cup \{\infty\}$ and $\alpha_1,\dotsm,\alpha_k\in \R$ satisfying \eqref{eq:new-Seiberg}. Let $h_L$ be defined as in \eqref{eq:Liouville-field}, $s$ as in \eqref{eq:zgamma} and $\mu_{h_L}(\cdot)$ be the Gaussian multiplicative chaos measure corresponding to $h_L$.
	
	Then the unit volume Liouville field on the sphere with marked points $(z_1,\dotsm,z_k)$ and log-singularities $(\alpha_1,\dotsm,\alpha_k)$ is a random field distributed as 
	\[
	h_L-\gamma^{-1}\log \mu_{h_L}(\C)
	\]under the re-weighted measure $$d\widehat{\P}^{(\alpha_i,z_i)_i}:=\E^{\P_{\hat g}}\left[\mu_{h_L}(\C)^{\frac{-s}{\gamma}}\right]^{-1} \mu_{h_L}(\C)^{\frac{-s}{\gamma}} d\P_{\hat g}.$$ Correspondingly, the unit volume Liouville measure, denoted hereafter by $\DKRVsph{\alpha_1,\cdot,\alpha_k}$, is the measure distributed as $\bar \mu_{h_L}(\cdot)$ under $\widehat{\P}^{(\alpha_i,z_i)_i}$.
\end{definition}

Notice that we did not include $\hat g$ in the notation of $\widehat{\P}^{(\alpha_i,z_i)_i}$. This is due to Theorem \ref{thm:iofm} which proves that the unit volume Liouville measure does not depend on the choice of normalization. We will be interested in the special case of $k=3$ and $\alpha_i = \gamma$ for all $i =1,2,3$. Note that in this case \eqref{eq:new-Seiberg} is satisfied for all $\gamma\in (0,2)$.

\subsubsection{Properties of the Liouville measure: moments, covariance under Mobius transforms and independence of the background measure}\label{sec:background}
In this section we record two invariance properties for the unit volume Liouville measure $\DKRVsph{\alpha_1,\dotsm,\alpha_k}$, and a criterion for the existence of moments of the Liouville measure.

First, we claim Definition~\ref{def:DKRV-def} is independent of the background measure, justifying our notation $\widehat{\P}^{(\alpha_i,z_i)_i}$ in that definition. Indeed, in Definition~\ref{def:DKRV-def} the only role of the background measure is to fix the whole-plane GFF. We will show in Theorem~\ref{thm:iofm} below that this choice does not matter. This corresponds to the Weyl anomaly described in \cite[Lemma~3.11.2)]{DKRV}.

Consider a probability measure $\rho(z)dz$ and let $h_\rho$ be the whole-plane GFF normalized such that $(h,\rho) = 0$. As above, denote by $m_\rho(f) := \int_{\R^2} f(z)\rho(z)dz$ and set 
\begin{equation}\label{eq:exactdefoffield}
h_{L(\rho)}(z)=h_{\rho}(z)+2Qm_\rho(\log\frac{1}{|z-\cdot|})+\sum\limits_{i=1}^{k}\alpha_i G_{\rho}(z,z_i),
\end{equation}
where $z_i$ are marked points with $\log$-singularities $\alpha_i$, and as in \cite{DKRV}, $G_{\rho}$ is the Green function associated with the background measure $\rho$ as defined in \eqref{def:green}.

Notice that for the normalized spherical metric $\hat{g}$, the field $h_{L(\hat{g})}$ is the same as $h_L$ defined above. Finally, write $\mu_{h_{L(\rho)}}$ as the volume measure corresponding to the field $h_{L(\rho)}$.
\begin{theorem}\label{thm:iofm}
	Take any $\rho_1, \rho_2$ as above and let $s=\sum\limits_{i=1}^{k}\alpha_i-2Q$. The fields
	\begin{equation*}
	h_{L(\rho_1)}-\gamma^{-1}\log \mu_{h_{L(\rho_1)}}(\mathbb{C}) \quad\textrm{under the measure}\quad c\mu_{h_{L(\rho_1)}}(\mathbb{C})^{-\frac{s}{\gamma}}d\P_{\rho_1}
	\end{equation*}
	and
	\begin{equation*}
	h_{L(\rho_2)}-\gamma^{-1}\log \mu_{h_{L(\rho_2)}}(\mathbb{C}) \quad\textrm{under the measure}\quad c\mu_{h_{L(\rho_2)}}(\mathbb{C})^{-\frac{s}{\gamma}}d\P_{\rho_2}
	\end{equation*}
	are equal in law.
	Here the constants $c$ are chosen to make the above measures probability measures.
\end{theorem}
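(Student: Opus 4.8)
The plan is to realize both pinnings on a single probability space and then remove the resulting deterministic shift by a Cameron--Martin/Girsanov argument. Since the whole--plane GFF is only defined modulo an additive constant, the two pinned fields describe the same object: working under $\P_{\rho_1}$ and writing $h=h_{\rho_1}$, the field $h-(h,\rho_2)$ has covariance $G_{\rho_2}$ (a short computation from \eqref{def:green}, using that $\rho_1,\rho_2$ are probability measures so that $\rho_2-\rho_1$ has total mass zero) and hence is distributed as $h_{\rho_2}$ under $\P_{\rho_2}$. Consequently every $\P_{\rho_2}$--expectation appearing in the second displayed field of the theorem can be rewritten as a $\P_{\rho_1}$--expectation of the corresponding functional of $h-(h,\rho_2)$, and from now on I would work entirely under $\P_{\rho_1}$.

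First I would compute the difference of the two Liouville fields under this coupling. Writing $\Delta:=(h,\rho_2)$ and inserting the explicit form of $G_\rho$ into \eqref{eq:exactdefoffield}, all marked--point contributions reorganize into
\[ h_{L(\rho_2)}(z)=h_{L(\rho_1)}(z)+f(z)+C_0-\Delta, \]
where $C_0$ is a deterministic constant, $\Delta$ is a random constant in $z$, and $f(z)=-s\int_{\C}\log\frac{1}{|z-w|}\,(\rho_2-\rho_1)(dw)$ is a deterministic function; the coefficient $-s=2Q-\sum_i\alpha_i$ is exactly what multiplies the background--dependent term $m_\rho(\log\frac{1}{|z-\cdot|})$. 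Because $\rho_2-\rho_1$ has zero mass, $f$ coincides, up to an additive constant, with the $G_{\rho_1}$--potential of the signed measure $\sigma:=-s(\rho_2-\rho_1)$, i.e. $f(z)=\int G_{\rho_1}(z,w)\sigma(dw)$ modulo a constant. Using the chaos scaling $\mu_{\phi+g}(dz)=e^{\gamma g(z)}\mu_\phi(dz)$ and setting $\psi:=h_{L(\rho_1)}+f$, this gives $\mu_{h_{L(\rho_2)}}(\C)=e^{\gamma(C_0-\Delta)}\mu_{\psi}(\C)$. The key algebraic point is that the two global constants cancel in the unit--volume normalization, so $h_{L(\rho_2)}-\gamma^{-1}\log\mu_{h_{L(\rho_2)}}(\C)=\psi-\gamma^{-1}\log\mu_{\psi}(\C)$, while in the reweighting they survive as $\mu_{h_{L(\rho_2)}}(\C)^{-s/\gamma}=e^{-sC_0}e^{s\Delta}\mu_{\psi}(\C)^{-s/\gamma}$.

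Finally I would apply the Cameron--Martin theorem for the GFF $h$ with covariance $G_{\rho_1}$ to shift the field by $f$. Since $f$ is the $G_{\rho_1}$--potential of $\sigma$, the shift $h\mapsto h+f$ turns $h_{L(\rho_1)}$ into $\psi$ at the cost of the density $\exp\!\big((h,\sigma)-\tfrac12\langle\sigma,G_{\rho_1}\sigma\rangle\big)$. The crucial cancellation is that $(h,\sigma)=-s(h,\rho_2-\rho_1)=-s\Delta$, using $(h,\rho_1)=0$, which exactly kills the factor $e^{s\Delta}$ produced above; what remains is a deterministic constant depending only on $\rho_1,\rho_2,s$. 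Hence for every bounded continuous $F$ the $\rho_2$--numerator equals a fixed constant multiple of the $\rho_1$--numerator $\E^{\P_{\rho_1}}[F(h_{L(\rho_1)}-\gamma^{-1}\log\mu_{h_{L(\rho_1)}}(\C))\,\mu_{h_{L(\rho_1)}}(\C)^{-s/\gamma}]$, the same constant for all $F$; taking $F\equiv 1$ identifies this constant with the ratio of normalizations, so after dividing the two laws coincide.

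The main obstacle I expect is making the Girsanov step rigorous at the level of the chaos measure rather than the field: one must verify that $f$ lies in the Cameron--Martin space and that $\mu_{\psi}=e^{\gamma f}\mu_{h_{L(\rho_1)}}$ together with the $\mu(\C)^{-s/\gamma}$ reweighting are jointly integrable, so that the formal shift and the reweighting may be interchanged. Everything else---the change--of--pinning covariance identity and the bookkeeping of the deterministic constants---is routine, and the computation is really the probabilistic incarnation of the Weyl anomaly of \cite{DKRV}.
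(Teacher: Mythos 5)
Your proposal is correct and takes essentially the same route as the paper: both arguments couple the two pinned fields through the whole-plane GFF modulo an additive constant, observe that the unit-volume field $h_{L(\rho)}-\gamma^{-1}\log\mu_{h_{L(\rho)}}(\C)$ is insensitive to constants while the weight $\mu_{h_{L(\rho_2)}}(\C)^{-s/\gamma}$ produces the random factor $e^{s(h,\rho_2-\rho_1)}$, and then invoke Cameron--Martin to match this exponential factor against the deterministic difference of the two Liouville fields (your $f$, the paper's $g_{\rho_2}-g_{\rho_1}$ up to a constant). The only difference is the direction in which Cameron--Martin is read---you shift by $f$ and generate the compensating density, whereas the paper reweights by the exponential and reads off the induced drift---which is the same computation.
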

\begin{proof}
	To begin with, recall that there is an identification between the whole-plane GFF pinned using some background measure $\rho$ and the whole-plane GFF seen as a modulo-constant distribution: the passage from the latter to the former was described in Subsection \ref{subsec:whole} and to pass from former to the latter, we just test the GFF against zero mean test functions. Using this identification, it suffices to prove the theorem using the probability measure $d\P$ induced by the whole-plane GFF on the modulo-constant distributions. We write $(h, \phi)$ for this modulo constant field acting on zero mean distributions $\phi$.
	Now, observe that under the measure $d\P$, we have that almost surely $h_{\rho_1} = h_{\rho_2}-(h,\rho_1 - \rho_2)$.
	Write also 
	\[g_{\rho_i}=h_{L(\rho_i)}-h_{\rho_i},\quad i=1,2.\]
	Then in particular
	\[h_{L(\rho_1)} = h_{\rho_2}+g_{\rho_1} - (h,\rho_1 - \rho_2).\]
	As for $i = 1,2$ the quantity $h_{L(\rho_i)}-\gamma^{-1}\log \mu_{h_{L(\rho_i)}} (\C)$ remains unchanged after adding a constant to the field, we can write:
	\[h_{L(\rho_1)}-\gamma^{-1}\log \mu_{h_{L(\rho_1)}}(\mathbb{C}) = h_{\rho_2}+g_{\rho_1}-\gamma^{-1}\log \mu_{h_{\rho_2}+g_{\rho_1}}(\mathbb{C}).\]
	In addition, we also have that
	\[\mu_{h_L(\rho_1)}(\C)^{-\frac{s}{\gamma}}=\mu_{h_{\rho_2}+g_{\rho_1}}(\C)^{-\frac{s}{\gamma}}\exp(s(h, \rho_1 - \rho_2)).\]
	By the Cameron-Martin theorem for the modulo-additive constant Gaussian free field (see e.g. \cite{SJ}), reweighing $d\P$ by $\exp(s(h, \rho_1 - \rho_2))$ induces for any zero mean test function $\phi$ a drift equal to 
	$$-s\int_{\R^2}\int_{\R^2} \phi(z)\log |z-w|(\rho_1-\rho_2)(w)dzdw$$
	on $(h, \phi)$. 
	Now as the difference of Green's functions $G_{\rho_1}(z,z_i) - G_{\rho_2}(z,z_i)$ satisfies 
	\[\Delta (G_{\rho_1}(z,z_i) - G_{\rho_2}(z,z_i)) =\rho_1-\rho_2, \quad \textrm{for all}\; 1\le i\le k.\]
	Finally, one can verify that
	$$-s\int_{\R^2}\phi(z)\log |z-w|(\rho_1-\rho_2)(w)dw = g_{\rho_2}-g_{\rho_1}+C$$
	for some absolute constant $C$.	Now Theorem~\ref{thm:iofm} follows.
\end{proof}

Second, the following theorem says that the unit volume Liouville measure transforms covariantly under Mobius transforms:
\begin{theorem}\label{thm:coordinate-change}
	\cite[Theorem 3.7]{DKRV}
	
	Let $\psi$ be a Mobius transform of the sphere. The law of the unit volume Liouville field $\phi$ under $\widehat{\P}^{(\alpha_i,z_i)_i}$ is the same as that of $\phi\circ \psi+Q\log|\psi'|$ under $\widehat{\P}^{(\alpha_i,\psi(z_i))_i}$. The area measure under $\widehat{\P}^{(\alpha_i,\psi(z_i))_i}$ has the same law as the pushforward of the area measure under $\widehat{\P}^{(\alpha_i,z_i)_i}$.
\end{theorem}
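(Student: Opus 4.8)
The plan is to work directly with the field description in Definition~\ref{def:DKRV-def} and reduce the statement to three ingredients: the conformal covariance of Gaussian multiplicative chaos, the Möbius invariance of the modulo-constant whole-plane GFF, and a deterministic bookkeeping of the drift terms. Let $\Phi$ be distributed as the unit volume Liouville field under $\widehat{\P}^{(\alpha_i,\psi(z_i))_i}$, so that $\Phi = h_L^{(\psi(z_i))} - \gamma^{-1}\log\mu_{h_L^{(\psi(z_i))}}(\C)$ under the reweighting by $\mu_{h_L^{(\psi(z_i))}}(\C)^{-s/\gamma}$, where $h_L^{(\psi(z_i))}$ denotes the field \eqref{eq:Liouville-field} with marked points at $\psi(z_i)$. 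Since precomposing with $\psi$ and adding $Q\log|\psi'|$ commute with subtracting the total mass, I would first record that
\[
\Phi\circ\psi + Q\log|\psi'| = \tilde h_L - \gamma^{-1}\log\mu_{\tilde h_L}(\C), \qquad \tilde h_L := h_L^{(\psi(z_i))}\circ\psi + Q\log|\psi'|,
\]
using $\mu_{\tilde h_L}(\C) = \mu_{h_L^{(\psi(z_i))}}(\C)$. The theorem then amounts to showing that $\tilde h_L$, under the reweighting inherited from the $\psi(z_i)$ side, has the same law as $h_L^{(z_i)}$ under $\widehat{\P}^{(\alpha_i,z_i)_i}$.

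The two structural inputs are the following. First, the conformal covariance of chaos: for the chaos measure of a log-correlated field $f$ and a Möbius $\psi$, one has $\mu_{f\circ\psi + Q\log|\psi'|}(A) = \mu_f(\psi(A))$ for every Borel $A$, where the value $Q = 2/\gamma + \gamma/2$ is exactly what compensates the regularization anomaly coming from the conformal distortion of circles. Taking $A = \C$ gives the total-mass invariance used above, while the relation $\mu_\phi(\cdot)=\mu_\Phi(\psi(\cdot))$ is precisely the pushforward identity asserted in the second part of the theorem. Second, the modulo-constant whole-plane GFF is exactly Möbius invariant, $h\circ\psi \stackrel{d}{=} h$, so the Gaussian part $h_{\hat g}\circ\psi$ of $\tilde h_L$ has the law of a whole-plane GFF; by Theorem~\ref{thm:iofm} the induced change of pinning does not affect the unit volume field, which gives me the freedom to match $\tilde h_L$ with $h_L^{(z_i)}$ only up to a change of background measure and an additive constant.

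It then remains to compare the deterministic drifts. The key algebraic identity is the exact Möbius relation $\log|\psi(z)-\psi(w)| = \log|z-w| + \tfrac12\log|\psi'(z)| + \tfrac12\log|\psi'(w)|$. Applying it to the singular part of $\sum_i\alpha_i G_{\hat g}(\psi(z),\psi(z_i))$ reproduces the correct singularities $\sum_i\alpha_i\log\frac{1}{|z-z_i|}$ at the points $z_i$ and produces a spurious multiple $-\tfrac12(\sum_i\alpha_i)\log|\psi'(z)|$; combined with the explicit $+Q\log|\psi'(z)|$ and the transform of the curvature term $\frac{Q}{2}\log\hat g(\psi(z))$, I would collect all multiples of $\log|\psi'(z)|$ together with the $m_{\hat g}$-type contributions from \eqref{def:green} and check that they reassemble, up to an additive constant, into the deterministic drift of $h_L^{(z_i)}$ relative to a suitable background measure.

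The main obstacle is precisely this last bookkeeping: because the spherical background $\hat g$ is not Möbius invariant, the transformed curvature term $\frac{Q}{2}\log\hat g(\psi(z)) + Q\log|\psi'(z)|$ differs from $\frac{Q}{2}\log\hat g(z)$ by a genuinely non-constant conformal factor, so the drifts do not match on the nose. I would handle this by not insisting on an exact match: the non-constant discrepancy is exactly of the form generated by changing the pinning of the GFF under $\psi$ (equivalently, a difference of Green's functions with different background measures, as in the computation in the proof of Theorem~\ref{thm:iofm}), and hence is absorbed upon invoking background-measure independence. Finally, the reweighting factors agree because $s=\sum_i\alpha_i-2Q$ is unchanged (the weights $\alpha_i$ are carried along) and $\mu(\C)^{-s/\gamma}$ is invariant by total-mass invariance; together with the GFF invariance this identifies the two reweighted laws and completes the proof.
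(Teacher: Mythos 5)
Your proposal is essentially correct, but note that the paper itself contains no proof of this statement: it is imported verbatim as \cite[Theorem 3.7]{DKRV}, where it is established by a direct computation on the regularized partition function \eqref{eq:partition-function}, tracking how circle averages, the anomaly terms and the conformal weights $\hat g(z_i)^{\Delta_{\alpha_i}}$ transform under $\psi$. Your route is genuinely different and stays inside the present paper's toolkit, and the bookkeeping you defer does close — in fact more cleanly than you suggest. Writing $\rho(z):=\hat g(\psi(z))|\psi'(z)|^2$ (a smooth probability density, hence admissible in Theorem~\ref{thm:iofm}), the exact factorization $|\psi(z)-\psi(w)|=|z-w|\,|\psi'(z)|^{1/2}|\psi'(w)|^{1/2}$ gives $G_{\hat g}(\psi(z),\psi(w))=G_{\rho}(z,w)$ exactly, since both sides solve the same Poisson equation on the sphere and both integrate to zero against $\rho$; moreover the curvature term and the anomaly combine into $\frac{Q}{2}\log\hat g(\psi(z))+Q\log|\psi'(z)|=\frac{Q}{2}\log\rho(z)=2Q\,m_\rho(\log\tfrac{1}{|z-\cdot|})+\mathrm{const}$. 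Hence $\tilde h_L$ equals, up to an additive constant, exactly the field $h_{L(\rho)}$ of \eqref{eq:exactdefoffield}, and Mobius invariance of the modulo-constant whole-plane GFF identifies the law of $h_{\hat g}\circ\psi$ with $\P_\rho$; the additive constant cancels in $\tilde h_L-\gamma^{-1}\log\mu_{\tilde h_L}(\C)$ and only multiplies the weight $\mu(\C)^{-s/\gamma}$ by a constant absorbed into the normalization, so Theorem~\ref{thm:iofm} with $(\rho_1,\rho_2)=(\rho,\hat g)$ yields the field statement, and the chaos coordinate-change rule $\mu_{f\circ\psi+Q\log|\psi'|}(A)=\mu_f(\psi(A))$ yields the pushforward statement for the measures. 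What your approach buys is brevity and self-containedness: Mobius covariance of the unit-volume objects becomes a corollary of the Weyl-anomaly-type Theorem~\ref{thm:iofm}. What the DKRV computation buys is the stronger covariance of the full (non-unit-volume) partition function, including the precise weights $\Delta_{\alpha_i}$, which your reduction does not recover. The two inputs you invoke but should still justify explicitly — the GMC coordinate-change rule for the whole-plane field with log singularities (including the chart at infinity) and the Mobius invariance of the modulo-constant whole-plane GFF — are standard but are not proved in this paper.
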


Finally, for the moment estimate that we use later on, we first recall a slightly modified formulation of Lemma 3.10 in \cite{DKRV}:

\begin{lemma}\label{lem:moments}
	Fix $\gamma \in (0,2)$ and let $\rho = \hat{g}$, i.e. take the spherical measure as the pinning measure. Consider the field $h_{L(\rho)}$ as in \eqref{eq:exactdefoffield} under the law $\P_\rho$. Then for any $q \in \R$ with $q < \frac{4}{\gamma^2} \wedge \min_i \frac{2}{\gamma}(Q - \alpha_i)$ we have that $\E^{\P_{\rho}}[\mu_{h_{L(\rho)}}(\C)^q] < \infty.$ 
\end{lemma}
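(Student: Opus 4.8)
The plan is to decompose the total mass $\mu_{h_{L(\rho)}}(\C)$ into contributions from a few distinct regions and to bound the $q$-th moment of each separately. Write $h_{L(\rho)}=h_\rho+g$ where, by \eqref{eq:exactdefoffield} with $\rho=\hat g$, the deterministic part $g(z)=\tfrac{Q}{2}\log\hat g(z)+\sum_i\alpha_i G_{\hat g}(z,z_i)$ is smooth away from the $z_i$ and from $\infty$ and behaves like $\alpha_i\log\frac{1}{|z-z_i|}$ near each $z_i$. Fix small disjoint balls $B_i=B(z_i,r_0)$, a neighborhood $B_\infty$ of $\infty$, and let $K$ be the compact complement, so that $\mu(\C)=\mu(K)+\sum_i\mu(B_i)+\mu(B_\infty)$ with $\mu=\mu_{h_{L(\rho)}}$. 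For $q\ge 1$ the $L^q$ triangle inequality, and for $0<q<1$ the elementary inequality $(\sum_j a_j)^q\le\sum_j a_j^q$, reduce the problem to bounding $\E^{\P_\rho}[\mu(K)^q]$, each $\E^{\P_\rho}[\mu(B_i)^q]$, and $\E^{\P_\rho}[\mu(B_\infty)^q]$. For $q<0$ one notes $\mu(\C)\ge\mu(K_0)$ for a fixed ball $K_0$ in the bulk where $e^{\gamma g}$ is bounded below, so finiteness of all negative moments follows from the standard exponential lower tail of GMC mass; hence I may assume $q>0$ below.

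On the bulk region $K$ the weight $e^{\gamma g}$ is bounded above and below, so $\mu(K)$ is comparable to the mass of a standard Gaussian multiplicative chaos measure over a compact set, and the classical positive-moment criterion for two-dimensional GMC gives $\E^{\P_\rho}[\mu(K)^q]<\infty$ for every $q<4/\gamma^2$. Near $\infty$ I would apply the inversion $w=1/z$; because $\hat g$ is the spherical metric, the coordinate change turns $B_\infty$ into a neighborhood of $0$ whose effective singularity corresponds to $\alpha=-\gamma$ (the factor $\hat g(z)^{\gamma Q/2}\,dz$ becomes $\sim |w|^{\gamma^2}\,dw$, and $2\gamma Q-4=\gamma^2$). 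Since $\tfrac{2}{\gamma}(Q+\gamma)=\tfrac{4}{\gamma^2}+3>\tfrac{4}{\gamma^2}$, the binding constraint at $\infty$ is again $q<4/\gamma^2$, so $\E^{\P_\rho}[\mu(B_\infty)^q]<\infty$ throughout the required range.

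The heart of the matter is each insertion ball $B_i$, where $\mu(B_i)$ is comparable to $\int_{B(z_i,r_0)}|z-z_i|^{-\gamma\alpha_i}\,\mu_{h_\rho}(dz)$. I would analyze this through the radial circle-average decomposition of $h_\rho$ about $z_i$: writing the contribution of the dyadic annulus at log-scale $t$ in terms of the circle-average Brownian motion $B_t$, the deterministic exponents collect to $\gamma B_t+t(\gamma\alpha_i-2-\tfrac{\gamma^2}{2})=\gamma B_t-\gamma(Q-\alpha_i)t$, using $\gamma Q=2+\gamma^2/2$. Thus $\mu(B_i)$ is controlled, up to independent unit-scale "local" GMC masses, by the exponential functional $\int_0^\infty e^{\gamma B_t-\gamma(Q-\alpha_i)t}\,dt$ of Brownian motion with strictly negative drift (since $\alpha_i<Q$). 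The classical tail estimate for such functionals gives a power-law tail of exponent $\tfrac{2}{\gamma}(Q-\alpha_i)$, and combining it with the $4/\gamma^2$-tail of the local masses yields $\E^{\P_\rho}[\mu(B_i)^q]<\infty$ precisely for $q<\tfrac{2}{\gamma}(Q-\alpha_i)\wedge\tfrac{4}{\gamma^2}$.

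Taking the minimum over all regions gives the claimed range. The \emph{main obstacle} is the insertion estimate of the last paragraph: one must make the radial decomposition rigorous, in particular control the dependence between the coarse circle-average process and the fine-scale GMC fluctuations and justify that their tails combine as stated. This is exactly the content of the moment computation in \cite[Lemma 3.10]{DKRV}, so I would either cite the exponential-functional tail estimates used there (and in the Gaussian multiplicative chaos literature) or reproduce the dyadic-annulus argument in full; the bulk, infinity, and decomposition steps are routine by comparison.
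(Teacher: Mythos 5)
The paper offers no proof of Lemma~\ref{lem:moments}: it is recalled as a slightly modified formulation of \cite[Lemma 3.10]{DKRV}, and the only argument actually written out nearby is the proof of Corollary~\ref{cor:moments}, which transfers the bound from the pinning $\hat g$ to the pinning $\mathfrak c$. Your proposal therefore does strictly more than the paper, and the reconstruction is sound: the region decomposition, the bulk bound $q<4/\gamma^2$, the inversion bookkeeping at infinity (the exponent $2\gamma Q-4=\gamma^2$, i.e.\ an effective insertion $\alpha=-\gamma$ whose constraint $\tfrac{2}{\gamma}(Q+\gamma)=\tfrac{4}{\gamma^2}+3$ is never binding), and the radial analysis at each $z_i$ (drift $-\gamma(Q-\alpha_i)$, exponential-functional tail exponent $\tfrac{2}{\gamma}(Q-\alpha_i)$) are all correct, as is the split into the cases $q\ge 1$, $0<q<1$ and $q<0$. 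Two remarks. First, $h_{\hat g}$ is not exactly log-correlated: $G_{\hat g}(z,w)$ differs from $\log\tfrac{1}{|z-w|}$ by smooth terms bounded on compacts, so the comparison steps in your sketch need Kahane's convexity inequality or an absolute-continuity argument, and the inversion step should invoke the invariance of $\hat g$ (hence of $G_{\hat g}$, hence of the law of $h_{\hat g}$) under $z\mapsto 1/z$; these points are routine and of the same nature as the paper's proof of Corollary~\ref{cor:moments}. Second, the step you flag as the main obstacle --- rigorously combining the radial exponential functional with the unit-scale lateral GMC masses --- is precisely the content of \cite[Lemma 3.10]{DKRV}; since your stated fallback is to cite that lemma, your proof ultimately rests on the same external result as the paper's one-line recall, so there is no gap relative to what the paper itself does.
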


We will need the same estimate in the case where $\rho = \mathfrak c$, where we denote by $\mathfrak c$ the uniform probability measure on the unit circle.

\begin{corollary}\label{cor:moments}
	The lemma above holds holds for $\rho = \mathfrak c$.
\end{corollary}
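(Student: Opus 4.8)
The plan is to transfer the moment bound from the spherical pinning $\hat g$ to the circle pinning $\mathfrak c$ by comparing the two Liouville fields on a common probability space, exactly as in the opening of the proof of Theorem~\ref{thm:iofm}. Realizing both $h_{\hat g}$ and $h_{\mathfrak c}$ as pinnings of one modulo-constant whole-plane GFF $h$, we have $h_{\mathfrak c} = h_{\hat g} + Y$ almost surely, where $Y := (h, \hat g - \mathfrak c)$ is a centered Gaussian random variable (well-defined and of finite variance since $\hat g - \mathfrak c$ has zero total mass and both $\hat g$ and $\mathfrak c$ integrate $\log$ against themselves). Writing $g_\rho := h_{L(\rho)} - h_\rho$ for the deterministic part of \eqref{eq:exactdefoffield}, I would record that
\[
h_{L(\mathfrak c)}(z) = h_{L(\hat g)}(z) + Y + f(z), \qquad f(z) := g_{\mathfrak c}(z) - g_{\hat g}(z),
\]
with $f$ a purely deterministic function.

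Next I would check that $f$ is bounded on $\C$. Using \eqref{def:green}, the singular terms $\sum_i \alpha_i \log\frac{1}{|z-z_i|}$, being common to $g_{\mathfrak c}$ and $g_{\hat g}$, cancel, and a direct computation gives $f(z) = (2Q - \sum_i \alpha_i)\,\big(m_{\mathfrak c} - m_{\hat g}\big)\!\left(\log\tfrac{1}{|z-\cdot|}\right) + \text{const}$. The logarithmic potential $m_{\mathfrak c}(\log\frac{1}{|z-\cdot|})$ of the uniform measure on the unit circle equals $0$ for $|z|\le 1$ and $-\log|z|$ for $|z|\ge 1$, hence is continuous and behaves like $-\log|z|$ at infinity; the smooth potential $m_{\hat g}(\log\frac{1}{|z-\cdot|})$ has the same $-\log|z| + o(1)$ asymptotics. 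Their difference is therefore continuous and vanishes at infinity, so $f$ is bounded, say $\|f\|_\infty = M < \infty$.

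With $M < \infty$ in hand the exponential of $f$ is harmless: since $\mu_{h_{L(\mathfrak c)}}(\C) = e^{\gamma Y}\int_\C e^{\gamma f}\,d\mu_{h_{L(\hat g)}}$ and $e^{-\gamma M}\mu_{h_{L(\hat g)}}(\C) \le \int_\C e^{\gamma f}\,d\mu_{h_{L(\hat g)}} \le e^{\gamma M}\mu_{h_{L(\hat g)}}(\C)$, there is a finite constant $c_q$ with $\mu_{h_{L(\mathfrak c)}}(\C)^q \le c_q\, e^{\gamma q Y}\,\mu_{h_{L(\hat g)}}(\C)^q$ for either sign of $q$. Taking $\E^{\P_{\mathfrak c}}$, which under the above coupling is an expectation over the law of $h$, I would decouple the Gaussian factor from the chaos by Hölder's inequality: for conjugate exponents $p, p'$,
\[
\E\!\left[e^{\gamma q Y}\mu_{h_{L(\hat g)}}(\C)^q\right] \le \big(\E\, e^{p\gamma q Y}\big)^{1/p}\big(\E\, \mu_{h_{L(\hat g)}}(\C)^{q p'}\big)^{1/p'}.
\]
The first factor is finite for every $p$ since $Y$ is Gaussian. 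For the second, because $q$ is strictly below the threshold $\frac{4}{\gamma^2}\wedge\min_i\frac{2}{\gamma}(Q-\alpha_i)$, one can pick $p'>1$ close enough to $1$ that $qp'$ stays below that threshold (automatic when $q \le 0$, as the threshold is positive), whence Lemma~\ref{lem:moments} gives finiteness. This yields $\E^{\P_{\mathfrak c}}[\mu_{h_{L(\mathfrak c)}}(\C)^q] < \infty$.

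The only genuinely delicate point is the boundedness of $f$: everything rests on the exact cancellation of the $\log$-singularities at the $z_i$ and on matching the $-\log|z|$ growth of the two logarithmic potentials at infinity, so that the residual function is globally bounded. Once this is secured the Hölder step is routine, the strict inequality in the hypothesis providing precisely the room needed to absorb the extra exponent $p'>1$.
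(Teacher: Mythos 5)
Your proposal is correct and follows essentially the same route as the paper: couple the two pinnings through one modulo-constant whole-plane GFF, observe that the deterministic parts of $h_{L(\hat g)}$ and $h_{L(\mathfrak c)}$ differ by a globally bounded function (the $\log$-singularities cancel and the two logarithmic potentials share the $-\log|z|$ asymptotics), and that the random parts differ by a single finite-variance Gaussian $Y$. In fact your explicit H\"older step, exploiting the strict inequality $q<\frac{4}{\gamma^2}\wedge\min_i\frac{2}{\gamma}(Q-\alpha_i)$ to absorb the exponent $p'>1$, carefully fills in what the paper compresses into the words ``and we conclude.''
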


\begin{proof}
	Notice that
	\begin{equation}
	\left|m_{\hat{g}}(\log\frac{1}{|z-\cdot|})-m_{\mathfrak c}(\log\frac{1}{|z-\cdot|})\right|=\left|\frac{1}{4}\log\hat{g}(z)+\log(|z|\vee 1)\right|<C
	\end{equation}
	where $C$ is some finite constant independent of $z$. 
	
	It follows from the definition of the Green's functions \eqref{def:green} that $G_{\hat{g}}$ and $G_{\mathfrak c}$ differ at most by some finite constant. Moreover, as remarked above, the difference between $h_{\hat{g}}$ and $h_{\mathfrak c}$ is a Gaussian of finite variance. Thus from \eqref{eq:exactdefoffield} we see that the difference between $h_{L(\hat{g})}$ and $h_{L(\mathfrak c)}$ is bounded by some Gaussian with bounded mean and variance and we conclude.
\end{proof}

In fact, the same moment bound holds for a much larger class of pinning measures $\rho$. However, for the more general case we do not have such a short and simple proof. 

\subsection{Duplantier-Miller-Sheffield's approach}\label{Description:DMS14}
We now briefly describe the approach to quantum surfaces initiated in \cite{Zipper,mating}. In particular we will show how to make sense of the unit area quantum sphere even in the case of two marked points.

The underlying idea is to think of quantum surfaces as abstract surfaces, and of different conformal parametrizations as of different embeddings of these surfaces. One way to do this is to define an equivalence class of random measures with a transformation rule under conformal mappings.

\begin{definition}\label{def:srf}
	Let $\gamma \in (0,2)$ and $Q = 2/\gamma + \gamma/2$. Suppose we have a random distribution $h$ on a domain $D$. Then a random surface with $k \geq 0$ marked points is an equivalence class of $(k+2)$-tuples $(D,h,x_1,\dots,x_k)$ (with $x_i\in\overline{D}$) under the following equivalence relation: two embeddings $(D_i, h_i, x^i_1,\dots,x^i_k)$ for $i = 1,2$ are considered equivalent if there is a conformal map $\phi:D_2 \rightarrow D_1$ such that
	\begin{enumerate}
		\item $h_2 = h_1\circ\phi+Q\log|\phi'|;$
		\item it induces a correspondence between marked points on $D_1$ and marked points on $D_2$: $x^1_j=\phi(x^2_j), j=1,\dots,k$.
	\end{enumerate}
\end{definition}
In each fixed embedding, one can define the corresponding Liouville measure $\mu_{h_1}$ (resp. $\mu_{h_2}$) as the Gaussian multiplicative chaos measure of parameter $\gamma$ corresponding to the field $h_1$ (resp. $h_2$). Here the transformation rule in the definition is chosen so that this $\gamma$-Liouville measure is defined intrinsically for any equivalence class: $\mu_{h_1}=\phi_{*}\mu_{h_2}$. The marked points in this framework correspond to the so called typical points of this Liouville measure, i.e. they have $\gamma$-singularities.

\begin{remark}
	If we have too few marked points to fix the automorphisms of $D$, then the sigma-algebra of $h$ does not coincide with that of the random surface -- i.e. only events that are invariant under the remaining automorphisms are measurable w.r.t the random surface.
\end{remark}
\begin{remark}
	Given a random surfaces with fewer marked points than are necessary to fix an embedding, we can still find well-defined embeddings of this surface: for example, we can just choose the missing number of points in a measurable way w.r.t. the random surface constructed with given marked points and then use all these points to choose a well-defined embedding.
\end{remark}

\subsubsection{Encoding surfaces using Bessel processes}\label{sec:Bessel}
In \cite{mating}, the authors introduce a way to encode random surfaces using Bessel processes and give a definition of the quantum unit sphere with  two marked points.

To motivate the construction, consider the infinite cylinder $\mathcal{Q}=\mathds{R}\times[0,2\pi]$ with $\mathds{R}\times \{0\}$ and $\mathds{R} \times \{2\pi\}$ identified. Here two marked points are given by the ends of the cylinder $\{-\infty,+\infty\}$ and the remaining degrees of Mobius freedom are: 1) rotations around the axis of the cylinder; 2) horizontal shifts. 

These degrees of freedom pair well with the decomposition of the GFF on $\mathcal{Q}$ into radial and angular parts \cite[Lemma 4.2]{mating}: the radial part is invariant under rotations and transforms under horizontal shifts while the angular part is invariant under horizontal shifts and transforms under rotations. This comes directly from the orthogonal decomposition of the Dirichlet space $\mathcal{H}(\mathcal{Q})$ on the cylinder, that is given by the closure of smooth functions $f$ on $\mathcal{Q}$ under the Dirichlet norm $\|\nabla f\|$ \cite[Lemma 4.2]{mating}:
\begin{lemma}\label{GFFDecompositionCylinder}
	Let $\mathcal{H}_1(\mathcal{Q}) \subset \mathcal{H}(\mathcal{Q})$ be the subspace of of functions which are constant on circles of the form $u\times[0,2\pi]$ with endpoints $0$ and $2\pi$ identified and $u\in\mathds{R}$. Also, let $\mathcal{H}_2(\mathcal{Q}) \subset \mathcal{H}(\mathcal{Q})$ be the subspace of functions on $\mathcal{Q}$ which are of mean zero on all such circles. Then $\mathcal{H}_1(\mathcal{Q})$ and $\mathcal{H}_2(\mathcal{Q})$ form an orthogonal decomposition of the space $\mathcal{H}(\mathcal{Q})$.
\end{lemma}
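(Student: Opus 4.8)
The plan is to exhibit the decomposition explicitly through the circle-averaging operator, verify Dirichlet-orthogonality by a one-line integration computation, and then promote the statement to the Hilbert-space closure by a density argument.

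First I would fix coordinates $(u,\theta)\in\mathbb{R}\times[0,2\pi]$ on $\mathcal{Q}$ (with the flat metric $du^2+d\theta^2$ and $\theta=0$, $\theta=2\pi$ identified) and, for a smooth function $f$ of finite Dirichlet energy, define the radial (circle-averaging) operator
$$(P_1 f)(u) := \frac{1}{2\pi}\int_0^{2\pi} f(u,\theta)\,d\theta, \qquad P_2 f := f - P_1 f.$$
By construction $P_1 f$ is constant on each circle $\{u\}\times[0,2\pi]$, so $P_1 f\in\mathcal{H}_1(\mathcal{Q})$, while $\int_0^{2\pi}(P_2 f)(u,\theta)\,d\theta = 0$ for every $u$, so $P_2 f\in\mathcal{H}_2(\mathcal{Q})$. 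Thus every smooth $f$ splits as $f = P_1 f + P_2 f$ with the two summands in the respective candidate subspaces; it remains to check that this splitting is Dirichlet-orthogonal and survives passage to the closure.

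The crux is the orthogonality computation. Writing $\nabla=(\partial_u,\partial_\theta)$ and taking $f_1\in\mathcal{H}_1$ (so $\partial_\theta f_1 = 0$ and $\partial_u f_1$ is independent of $\theta$) and $f_2\in\mathcal{H}_2$, the angular term of the gradient inner product drops out and one finds
$$\langle f_1, f_2\rangle_\nabla = \int_{\mathbb{R}}\int_0^{2\pi} \partial_u f_1(u)\,\partial_u f_2(u,\theta)\,d\theta\,du = \int_{\mathbb{R}} \partial_u f_1(u)\,\partial_u\!\left(\int_0^{2\pi} f_2(u,\theta)\,d\theta\right)du = 0,$$
where the last integral vanishes because $\int_0^{2\pi} f_2(u,\theta)\,d\theta\equiv 0$ by the mean-zero property (after justifying the interchange of $\partial_u$ and $\int d\theta$). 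This simultaneously establishes $\mathcal{H}_1\perp\mathcal{H}_2$ and, via the Pythagorean identity $\|f\|_\nabla^2 = \|P_1 f\|_\nabla^2 + \|P_2 f\|_\nabla^2$, shows that both $P_1 f$ and $P_2 f$ have finite energy, so $P_1,P_2$ are the orthogonal projections onto $\mathcal{H}_1,\mathcal{H}_2$.

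Finally I would pass from smooth functions to all of $\mathcal{H}(\mathcal{Q})$ by density: since $P_1$ is a norm-nonincreasing idempotent on the dense subspace of smooth finite-energy functions, and $\mathcal{H}_1,\mathcal{H}_2$ are the corresponding closures, $P_1$ extends continuously to an orthogonal projection on all of $\mathcal{H}(\mathcal{Q})$ with range $\mathcal{H}_1$ and kernel $\mathcal{H}_2$, yielding $\mathcal{H}(\mathcal{Q})=\mathcal{H}_1(\mathcal{Q})\oplus\mathcal{H}_2(\mathcal{Q})$. The main obstacle is not the one-line orthogonality identity but the analytic bookkeeping around it: confirming that the circle average of a finite-energy field again has finite energy, that differentiation under the angular integral is legitimate, and that the splitting is stable under the $\mathcal{H}(\mathcal{Q})$-closure (equivalently that $\mathcal{H}_1\cap\mathcal{H}_2=\{0\}$, modulo the usual additive-constant ambiguity on the non-compact cylinder). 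Each point is routine given the explicit projections, but together they are what upgrades the formal computation to a genuine orthogonal decomposition.
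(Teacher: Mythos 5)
Your proposal is correct. Note that the paper itself does not prove this lemma at all: it is imported verbatim as \cite[Lemma 4.2]{mating}, so there is no in-paper proof to compare against. Your argument --- circle-averaging projection $P_1$, the one-line Dirichlet orthogonality computation in which $\partial_\theta f_1 = 0$ kills the angular term and the circle-mean-zero property of $f_2$ kills the radial term, the Cauchy--Schwarz bound showing $P_1$ is norm-nonincreasing, and the extension to the closure by density --- is precisely the standard proof underlying the cited result, and your closing caveats (differentiation under the angular integral, stability under closure, the additive-constant degeneracy of the Dirichlet norm on the non-compact cylinder) are the right things to flag and are each routine to discharge.
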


\begin{remark}
	We call the $\mathcal{H}_1(\mathcal{Q})$ and $\mathcal{H}_2(\mathcal{Q})$ components the radial and angular components respectively.
\end{remark}
Let $\delta = 4- 8/\gamma^2$ and let $\nu_\delta^{BES}$ be Bessel excursion measure of dimension $\delta$.  In order to define $\DMSsph{2}$,  we first introduce an infinite measure on  quantum surfaces with two marked points as follows:
\begin{enumerate}
	\item Parameterize quantum surfaces by $(h,\cQ,\infty,-\infty)$.
	\item Sample an excursion $e$ according to $\nu_\delta^{BES}$.
	\item The radial $\mathcal{H}_1(\mathcal{Q})$ component of the quantum sphere is given by reparametrizing $2\gamma^{-1} \log e$ to have quadratic variation $du$.
	\item The angular $\mathcal{H}_2(\mathcal{Q})$ component is sampled independently from the law of the $\mathcal{H}_2(\mathcal{Q})$ component of a whole-plane GFF on $\mathcal{Q}$.
\end{enumerate}
Following \cite{mating2}, we denote this measure by $M_{\mathrm{BES}}$.

\begin{definition}\label{def:DMS-sphere}
	The unit area quantum sphere with two marked points is $M_{\mathrm{BES}}$ conditioned on the surface having unit quantum area. We denote its law  by $\DMSsph{2}$. Given an integer $k\ge 3$ and if we first sample of instance of $\DMSsph{2}$ then conditionally independently  sample $k-2$ points on the surface according to the quantum area, the resulting quantum surface with $k$ marked points is called the unit area quantum sphere with $k$ marked points, whose law is denoted by $\DMSsph{k}$. By forgetting one marked point in $\DMSsph{2}$, we obtain the unit area quantum sphere with 1 marked point $\DMSsph{1}$.
\end{definition}
\begin{remark}
	Although $M_{\mathrm{BES}}$ is an infinite measure, the restriction of $M_{\mathrm{BES}}$ to the event that the surface has quantum area larger than a positive number is finite.  One can check that it is also similarly possible to make sense of $M_{\mathrm{BES}}$ conditioned on have quantum area 1 to obtain a probability measure. 
\end{remark}
Notice that a priori $\DMSsph{2}$ having only two marked points, comes without a canonical embedding. Choosing an embedding amounts to fixing the rotation and the horizontal translation in some way. The rotation can be fixed arbitrarily. For the radial part, we find it convenient to use the so called \emph{maxima embedding}, where we fix the horizontal shift by requiring the location of the radial maxima to be at zero.

\subsubsection{Limiting procedure for the 2-point sphere}
In \cite[Proposition 5.13]{mating}, a limiting procedure is described to obtain $\DMSsph{2}$. It roughly goes as follows: take a Dirichlet GFF $h$ on the unit disk $\D$ with zero boundary conditions, and condition its Gaussian multiplicative chaos measure $\mu_h$ to have mass $e^{C} \leq \mu_h(\D) \leq e^C(1+\delta)$ for $C > 0$ large and $\delta > 0$ small. Denote the conditional law by $h_{C, \delta}$. Let $w$ be a point in $\D$ sampled from $\bar\mu_{h_{C,\delta}}$. It is then shown that the quantum surfaces $(h_{C,\delta} - \gamma^{-1}C,\D,w, \infty)$ converge in law to $\DMSsph{2}$, when we first let $C \to \infty$ and then let $\delta \to 0$.
	
	Here, convergence means that in some fixed embedding of these quantum surfaces the conditioned $\gamma$-Liouville measures converge weakly in law to the measure corresponding to $\DMSsph{2}$ in this embedding. For example, one can 
	\begin{itemize}
		\item map $\C$ to the cylinder $\mathcal{Q}$ via the logarithmic map, sending $\infty$ to $-\infty$ and $w$ to $\infty$;
		\item use the change-of-coordinates formula of Definition \ref{def:srf} to transform the field;
		\item horizontally shift the resulting field so that the maxima of its radial part is achieved at $\Re z = 0$;
		\item consider the $\gamma$-Liouville measure of the resulting field and, in order to obtain a measure on $\cQ$, extend this measure by zero outside of the shifted image of $\D$. 
	\end{itemize}
	Convergence of $(h_{C,\delta} - \gamma^{-1}C,\D, w,\infty)$ then means convergence in law of these measures on the cylinder $\mathcal{Q}$ under weak topology. The two marked points $w$ and $\infty$ become respectively $\infty$ and $-\infty$ on $\cQ$.
	
	By examining the corresponding proof in \cite{mating}, we lift the following statement:

\begin{proposition}\label{prop:2ptlim}
	Let $h=h_0-\gamma \log|z|-C$ where $h_0$ is a zero boundary GFF on $\D$ and $C$ is a constant. The quantum surface $(h,\C,0,\infty)$  conditioned on $e^{-\gamma\delta} \leq \mu_h(\D) \leq e^{\gamma\delta}$  converges to $\DMSsph{2}$. More precisely, if we embed the quantum surface $(h,\C,0,\infty)$ into $(\cQ,+\infty,-\infty)$ and fix the horizontal shift such that the maxima of the radial part is achieved at $\Re z=0$, then as $C\to \infty$ and then $\delta \to 0$ the conditional law of the quantum surface converges to the maxima embedding of $\DMSsph{2}$ described in Section~\ref{sec:Bessel} in the sense the corresponding $\gamma$-Liouville measures converge weakly in law.
\end{proposition}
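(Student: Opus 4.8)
The plan is to obtain Proposition~\ref{prop:2ptlim} directly from \cite[Proposition 5.13]{mating} as recalled above, the passage between the two being the rooting (Girsanov) identity for Gaussian multiplicative chaos. The statement in \cite{mating} roots the conditioned surface at a point $w$ sampled from the normalized bulk measure $\bar\mu_{h_{C,\delta}}$, whereas Proposition~\ref{prop:2ptlim} instead places a deterministic $\gamma$-log singularity at $0$ through the term $-\gamma\log|z|$. I would first explain that these two operations describe the same rooted surface, and then argue that the analysis carried out in the proof of \cite[Proposition 5.13]{mating} applies to the explicitly-inserted field, so that its limit is again $\DMSsph{2}$.

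Concretely, writing $G_\D$ for the Green's function of the zero-boundary GFF $h_0$ on $\D$, the rooting identity gives, for bounded $F$,
\[
\E\Big[\int_\D F(h_0,w)\,\mu_{h_0}(dw)\Big]=\int_\D \E\big[F(h_0+\gamma G_\D(\cdot,w),w)\big]\,(1-|w|^2)^{\gamma^2/2}\,dw,
\]
so that size-biasing the law of $h_0$ by its total mass and choosing $w\sim\bar\mu_{h_0}$ produces, conditionally on $w$, the field $h_0+\gamma G_\D(\cdot,w)$. Since $G_\D(\cdot,0)=\log(1/|\cdot|)$, the choice $w=0$ is exactly the field $h_0-\gamma\log|z|$ of the proposition, with the inserted point sent to the cylinder end $+\infty$ by the logarithmic map $z\mapsto\log(1/z)$. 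I would then follow the embedding recipe spelled out before Proposition~\ref{prop:2ptlim}: push the conditioned field to $\cQ$, decompose it into its radial $\cH_1(\cQ)$ and angular $\cH_2(\cQ)$ parts, and fix the horizontal shift by the maxima embedding. On $\cQ$ the insertion and the change-of-coordinates term together contribute a linear drift $(\gamma-Q)\,\Re u$ to the radial part, and the conditioning $\mu_h(\D)=e^{-\gamma C}\mu_{h_0-\gamma\log|z|}(\D)\in[e^{-\gamma\delta},e^{\gamma\delta}]$ is precisely the conditioning on the total quantum area by which \cite{mating} tilts the radial process; their analysis shows that in the double limit $C\to\infty$, $\delta\to0$ this radial process converges to $2\gamma^{-1}\log$ of a Bessel excursion of dimension $4-8/\gamma^2$, while the angular part converges to the angular part of the whole-plane GFF, which is the definition of $\DMSsph{2}$.

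The main obstacle is to reconcile the fixed insertion at $0$ with the measure-sampled root $w$ of \cite{mating} at the level of the chosen embedding. Moving the root of their surface to $0$ by the disk automorphism $\phi_w(z)=(z-w)/(1-\bar w z)$ introduces, through the change-of-coordinates term $Q\log|\phi_w'|$, a bounded harmonic discrepancy equal to $-2Q\log|1+\bar w z|$ up to an additive constant between their embedded field and ours. I would show this discrepancy is immaterial in the limit: it vanishes at $z=0$ and is largest near $\partial\D$, which under the maxima-embedding shift recedes to the far (asymptotically empty) end of the cylinder, so on every fixed compact subset of $\cQ$ it tends to $0$ as $C\to\infty$; consequently the two families of $\gamma$-Liouville measures share the same weak limit. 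The remaining bookkeeping---matching the shift $-C$ and the window $[e^{-\gamma\delta},e^{\gamma\delta}]$ here against the shift $-\gamma^{-1}C$ and the mass window used in \cite{mating}, both of which force the quantum area to $1$---is routine. Combining these steps yields the asserted weak convergence of the $\gamma$-Liouville measures to the maxima embedding of $\DMSsph{2}$.
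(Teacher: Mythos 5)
Your proposal follows, in substance, the same route as the paper: the paper offers no self-contained argument for Proposition~\ref{prop:2ptlim} at all, but simply ``lifts'' the statement by examining the proof of \cite[Proposition 5.13]{mating}, and the reason this works is exactly the mechanism you describe --- that proof proceeds via the Girsanov/rooting identity, so what is actually analysed there (the radial/angular decomposition on $\cQ$, the drift $(\gamma-Q)\Re u$, the area-tilted radial process converging to $2\gamma^{-1}\log$ of a dimension-$(4-8/\gamma^{2})$ Bessel excursion, the independent angular part) is precisely the fixed-insertion field $h_0-\gamma\log|z|-C$ of the proposition. So your first two paragraphs are the paper's proof, made explicit.

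Where your proposal goes beyond the paper --- deducing the fixed-root statement from the \emph{statement} of \cite[Proposition 5.13]{mating} via the M\"obius comparison in your last paragraph --- is also where the genuine gaps are. First, the ``additive constant'' $Q\log(1-|w|^2)$ that you set aside is not harmless: adding a constant to the field rescales the quantum area by $(1-|w|^2)^{\gamma Q}$ and therefore moves the conditioning window; it can only be absorbed by redefining $C$, which requires knowing both that the limit is unchanged along the perturbed sequence of constants and that $|w|$ stays away from $\partial\D$ with probability tending to one, neither of which you address. Second, the harmonic discrepancy $-2Q\log|1+\bar w z|$ does not merely perturb the field on compact subsets of $\cQ$: it also enters the \emph{conditioning event}, since $\mu_h(\D)$ integrates over all of $\D$, including the neighbourhood of $\partial\D$ where the discrepancy is of order one. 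To conclude that the two conditioned laws are close you need a mass-concentration input (most of the area lives near the radial maximum, where the discrepancy is $o(1)$) together with anti-concentration of the total area at the edges of the window $[e^{-\gamma\delta},e^{\gamma\delta}]$ --- and these estimates are themselves part of the proof in \cite{mating}. Third, \cite[Proposition 5.13]{mating} is an annealed statement (averaged over the sampled root $w$), whereas the proposition is quenched (root fixed at $0$); your comparison must therefore be uniform over typical $w$ before the annealed limit can be transferred to the quenched law. None of these obstacles looks fatal, but as sketched the shortcut through the statement alone does not close: one ends up re-opening the proof of \cite[Proposition 5.13]{mating}, which is exactly what the paper does.
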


\section{A limiting procedure for \texorpdfstring{$\DKRVsph{\gamma,\gamma,\gamma}$}{DKRV}}\label{sec:3pt}
This section is devoted to proving an approximation scheme for $\DKRVsph{\gamma,\gamma,\gamma}$. Let $D^\eps=\eps^{-1}\D$ be a large disk and $h^\eps_0$ be the zero boundary GFF on $D^\eps$. Let further $z_1,z_2\in \D$ and consider a GFF with two interior $\gamma$ singularities at $z_1, z_2$, and boundary values chosen such that in the limit they give rise to a third $\gamma$ singularity at infinity, i.e. we set:
\begin{equation}
\label{def:h}
h^\eps(z)=h_0^\eps(z)+  (2Q-\gamma)\log\eps + \gamma G_{D^\eps}(z,z_1)+\gamma G_{D^\eps}(z,z_2).
\end{equation}
Here $G_{D^\eps}$ is the Dirichlet Green function of $D^\eps$. 

Let $\P^\eps$ be the corresponding probability measure and $\mu_{h^\eps}$ be the associated Liouville measure of $h^\eps$. Furthermore, let $A_\eps =(h_0^\eps, \xi_\eps)_\nabla$ where $\xi_\eps = -\log(|z|\vee 1)-\log\eps$. Then $A_\eps$ is the circle average of $h^\eps_0$ around $\partial \D$.
\begin{theorem}\label{thm:3pt}
	In the above setup, suppose $\delta>0$, $E^\eps_\delta=\{\mu_{h^\eps }(\C)\in[e^{-\gamma\delta},e^{\gamma\delta}] \}$ and $H^\eps = \{ A_\eps  + (2Q-3\gamma) \log \eps \geq -|\log \eps|^{2/3}\}$. By first letting $\eps\to 0$ then $\delta\to 0$, $\mu_{h^\eps}$ conditioned on $E^\eps_\delta \cap H^\eps$ converges in law to $\DKRVsph{\gamma,\gamma,\gamma}$ of Definition \ref{def:DKRV-def} and with singularities at $z_1, z_2, \infty$. Here the topology of convergence is the weak topology of measures on $\C\cup\{\infty\}$. 
\end{theorem}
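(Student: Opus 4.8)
The plan is to exploit that, by Definition~\ref{def:DKRV-def}, $\DKRVsph{\gamma,\gamma,\gamma}$ is the normalized measure $\bar\mu_{h_L}$ of the Liouville field with three $\gamma$-insertions, reweighted by $\mu_{h_L}(\C)^{-s/\gamma}$ with $s=3\gamma-2Q$, as encoded in \eqref{eq:normalized-measure}. I would therefore show that conditioning $\mu_{h^\eps}$ on near-unit mass both (i) freezes the ``shape'' of the field into $h_L$ and (ii) generates exactly this power-law reweighting. The natural bookkeeping device is to split $h^\eps$ into its constant mode --- tracked by the circle average $A_\eps$, a centred Gaussian of variance $\sigma_\eps^2=|\log\eps|$ --- and an independent remainder, which I will call the shape field and whose total mass I denote $\mu^\circ(\C)$. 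This scheme is the two-point limiting procedure of Proposition~\ref{prop:2ptlim} with two extra explicit $\gamma$-insertions added at $z_1,z_2$ through the Green-function terms in \eqref{def:h}.

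First I would establish convergence of the shape field. After removing the constant mode, the restriction of $h_0^\eps$ to any fixed bounded domain converges in total variation to the whole-plane GFF modulo constants by Proposition~\ref{prop:whole-plane}, while the Dirichlet Green functions $G_{D^\eps}(\cdot,z_i)$ converge to their whole-plane counterparts up to additive constants. This produces the two explicit $\gamma$-log-singularities at $z_1,z_2$; the third, at $\infty$, emerges from the growing-domain procedure exactly as the second marked point did in Proposition~\ref{prop:2ptlim}, and a direct computation of the growth rate confirms that its weight is $\gamma$. By Theorem~\ref{thm:iofm} the target does not depend on the pinning measure, so I am free to compare against the form of $h_L$ built from whichever background is convenient.

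The heart of the argument is the zero-mode analysis. For $z$ in a bounded region one has $G_{D^\eps}(z,z_i)=\log\frac{1}{|z-z_i|}+|\log\eps|+o(1)$, so the deterministic shift $(2Q-\gamma)\log\eps$ together with the two factors $\gamma|\log\eps|$ combine into an effective constant $(2Q-3\gamma)\log\eps$; this is precisely the quantity controlled by $H^\eps$. Conditioning on $A_\eps=a$, the total mass factorizes as $\mu_{h^\eps}(\C)\approx\exp(\gamma(a+(2Q-3\gamma)\log\eps))\,\mu^\circ(\C)$, so the event $E^\eps_\delta$ forces $a\approx M_\eps-\gamma^{-1}\log\mu^\circ(\C)$ with $M_\eps:=(3\gamma-2Q)\log\eps$. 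Evaluating the Gaussian density $\propto\exp(-a^2/(2\sigma_\eps^2))$ of $A_\eps$ at this value and expanding the square, the term quadratic in $\log\mu^\circ(\C)$ is $O(|\log\eps|^{-1})$ and disappears, while the cross term contributes $\mu^\circ(\C)^{M_\eps/(\gamma\sigma_\eps^2)}$; since $M_\eps/\sigma_\eps^2\to 2Q-3\gamma=-s$, this converges to the desired weight $\mu^\circ(\C)^{-s/\gamma}$. The event $H^\eps$ confines $A_\eps$ to a window of the intermediate scale $|\log\eps|^{2/3}$ --- larger than the fluctuation scale $|\log\eps|^{1/2}$ but negligible against the shift scale $|\log\eps|$ --- which captures the bulk of the conditioned law and keeps this expansion uniformly valid while excluding atypical configurations that produce near-unit mass through an exceptional value of $A_\eps$.

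Finally I would assemble the pieces: combining the shape-field convergence with the emergent reweighting identifies the $\eps\to0$ limit of the conditioned normalized measure with $\bar\mu_{h_L}$ reweighted by $\mu_{h_L}(\C)^{-s/\gamma}$, that is, with $\DKRVsph{\gamma,\gamma,\gamma}$, and sending $\delta\to0$ removes the slack in the mass constraint. Running this rigorously requires uniform integrability, supplied by the finiteness of $\E^{\P_{\hat g}}[\mu_{h_L}(\C)^{-s/\gamma}]$ from Lemma~\ref{lem:moments} and Corollary~\ref{cor:moments} together with uniform moment bounds on the approximating masses $\mu^\circ(\C)$. I expect the main obstacle to be precisely this zero-mode step: the circle average is not literally the additive constant of the field, so one must control the discrepancy in the factorization of the mass (including the contribution from the region near the growing boundary), make the Gaussian tail and Laplace asymptotics rigorous and uniform over the relevant large-deviation window, and justify interchanging the $\eps\to0$ limit with the reweighting.
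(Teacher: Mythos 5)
Your proposal is correct and takes essentially the same route as the paper's own proof: the same decomposition of $h^\eps$ into the circle average $A_\eps$ and an independent pinned field, the same extraction of the reweighting $\mu_{h_L}(\C)^{-s/\gamma}$ from the Gaussian density of the zero mode evaluated on the mass-constrained event, the same understanding of $H^\eps$ as excluding the lower tail where a very negative zero mode is compensated by a huge shape-field mass, and the same supporting ingredients (Proposition~\ref{prop:whole-plane}, Corollary~\ref{cor:moments}, Theorem~\ref{thm:iofm}, dominated convergence via moment bounds). The only organizational difference is that the paper packages your explicit conditional-density expansion as a Girsanov tilt $d\Q^\eps\propto e^{(2Q-3\gamma)A_\eps}\,d\P^\eps$ under which $\wt A_\eps$ is centered, followed by the Fubini identities of Lemma~\ref{lem:Fubini} and the continuity statement of Lemma~\ref{lem:continuity}; this is the same computation written in a different order.
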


In the case $\gamma \geq \sqrt{2}$, the proof of Lemma~\ref{lem:Gaussian2} below implies that one can omit conditioning on $H^\eps$ and the theorem still holds. See Remark~\ref{rmk:Heps} for details. The proof of Proposition~\ref{prop:3pt} in Section~\ref{sec:equi} also suggests that the same is true when $\gamma<\sqrt{2}$. However, we cannot confirm this via a short argument for now.

\subsection{Preliminary calculations, notations and heuristics}\label{subsec:preliminary}
Using conformal invariance of the Green's function, we can write
\[h^\eps = h^\eps_0 + (2Q-3\gamma)\log \eps - \gamma \log |z-z_1| - \gamma \log |z-z_2| + r_\eps, \]
where $r_\eps =\gamma \log |1-\eps^2 \bar z_1 z|+\gamma \log |1-\eps^2 \bar z_2 z|$.

Recall that $A_\eps$ is the circle average of $h^\eps_0$ around $\partial \D$ and $\Var[A_\eps] = (\xi_\eps, \xi_\eps)_\nabla = - \log \eps$.
By the orthogonal decomposition of the GFF (see e.g. \cite[Section 2.4]{GFF}), we can write  
\[h_0^\eps = h^\eps_{\mathfrak c} + \frac{A_\eps}{\Var[A_\eps]}\xi_\eps,\]
where $h^\eps_{\mathfrak c}$ is distributed as $h^\eps_0$ conditioned to have circle-average zero on $\partial\D$ and is independent of $A_\eps$.  

Let $h_{\mathfrak c}$ be the whole-plane GFF normalized such that the circle average around $\partial \D$ is zero, i.e. in other words we consider the scalar whole-plane GFF with the background measure equal to the uniform measure on the unit circle. For consistence of notation we denote this unit measure by $\mathfrak c$. Then using the notations of Section 2.2, we write
\begin{align}\label{eps}
h^\eps_L& = h^\eps_{\mathfrak c} - (2Q-3\gamma)\log (|z| \vee 1) - \gamma \log |z-z_1| - \gamma \log |z-z_2|,\\
h_L &= h_{\mathfrak c} - (2Q-3\gamma)\log (|z| \vee 1) - \gamma \log |z-z_1| - \gamma \log |z-z_2|.
\end{align}
The corresponding probability laws are denoted by $\P_{\mathfrak c}^\eps,\P_{\mathfrak c}$. The field $h_{\mathfrak c}$ now corresponds to the field of \eqref{eq:exactdefoffield}, in the case where the background measure is the unit mass distributed uniformly on the unit circle. By Proposition~\ref{prop:whole-plane}, $\P_{\mathfrak c}$ is the limiting measure of $\P^\eps_{\mathfrak c}$, where the topology is given by convergence in total variation on any bounded domain.

We can now write
\begin{equation}
\label{def:h2}
h^\eps = h^\eps_L + \left(\frac{A_\eps}{\Var[A_\eps]} - (2Q-3\gamma)\right) \xi_\eps + r_\eps
\end{equation}
and by the orthogonal decomposition we have $d\P_\eps = dA_\eps\otimes d\P_{\mathfrak c}^\eps$. 
The key observation is that if we now consider the probability measure $$d\Q^\eps \propto \exp\{(2Q-3\gamma)A_\eps\}d\P^\eps,$$then under $\Q^\eps$:
\begin{enumerate}
	\item by Girsanov theorem, the law of  $\wt A_\eps:=A_\eps+(2Q-3\gamma)\log\eps$ is a centered Gaussian with variance $|\log \eps|$;
	\item $h^\eps_L$ has the same law as under $\P^\eps$ and is independent of $\wt A_\eps$.
\end{enumerate}
Now write 
\begin{equation}\label{eq:underQ}
h^\eps =h^\eps_L + \wt A_\eps + \wt A_\eps g_\eps(z) + r_\eps,
\end{equation}where $g_\eps(z)=\log(|z|\vee 1)/\log\eps$. 

Writing $h^\eps$ in this way is useful, as under $\Q^\eps$, $\wt A_\eps g_\eps(z) + r_\eps$ vanish as $\eps$ goes to 0 and therefore $h^\eps$ is roughly speaking an independent sum of $h^\eps_L$ and $ \wt A_\eps$. Since $h^\eps_L$ tends to $h_L$ in law and the law of $ \wt A_\eps$ vaguely tends to Lebesgue measure, this should remind the reader of the whole-plane GFF tensorized with Lebesgue measure, as discussed in Section~\ref{subsec:whole} and used in \cite{DKRV}.  Making this connection precise is the main content of the rest of this section.

\subsection{Statement of the main lemma and proof of Theorem~\ref{thm:3pt}}
In the rest of this section we omit $r_\eps$, as it is a harmonic function with $\max_z| r_\eps(z)|=o_\eps(1)$ and plays no role in the later argument. 
Given a Liouville measure $\mu_h$ with almost surely finite total mass, let $\bar\mu_{h}(\cdot)=\mu_{h}(\cdot)/\mu_{h}(\C)$.  
Recall the events $\E^\eps_\delta$ and $H^\eps$ from Theorem~\ref{thm:3pt}:
\[E_\delta^\eps=\{\mu_{h^\eps}(\C)\in[e^{-\gamma\delta},e^{\gamma\delta}]\}.\]
In light of \eqref{eq:underQ}, under $\Q^\eps$ it is instrumental to write
\[
H^\eps = \{\wt A_\eps  \geq -|\log \eps|^{2/3}\}.
\] 
Now Theorem~\ref{thm:3pt} is a consequence of the following lemma:
\begin{lemma}\label{lem:Gaussian2} Suppose $F$ is a nonnegative bounded continuous functional on the space of probability measures on $\C\cup\{\infty\}$ with the topology of weak convergence and $a=\gamma^{-1}(2Q-3\gamma)$. Then for all $\delta>0$
	\begin{equation}
	\label{eq:Gaussian2}
	\lim_{\eps\to 0} \sqrt{2\pi \Var[\wt A_\eps]}\E^{\Q^\eps}[F(\bar \mu_{h^\eps_L+\wt A_\eps g_\eps(z)}) \mu_{h^\eps_L+\wt A_\eps g_\eps}^{a}(\C) \1_{E^\eps_\delta}\1_{H^\eps}]=2\delta\E^{\P_{\mathfrak c}}[F(\bar \mu_{h_L}) \mu_{h_L}^{a}(\C)].
	\end{equation}
\end{lemma}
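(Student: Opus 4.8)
The plan is to exploit the decomposition recorded in \eqref{eq:underQ}, namely that under $\Q^\eps$ the field splits as $h^\eps = h^\eps_L + \wt A_\eps + \wt A_\eps g_\eps(z) + r_\eps$ with $\wt A_\eps$ a centered Gaussian of variance $|\log\eps|$ independent of $h^\eps_L$. First I would drop $r_\eps$ (harmonic, $\sup_z|r_\eps|=o_\eps(1)$) and condition on $h^\eps_L=f$, performing the $\wt A_\eps$-integral first by Fubini. Writing $x=\wt A_\eps$ and $\sigma^2=|\log\eps|=\Var[\wt A_\eps]$, the prefactor $\sqrt{2\pi\Var[\wt A_\eps]}$ exactly cancels the Gaussian normalization, so the inner integral becomes $\int_\R(\cdots)e^{-x^2/(2\sigma^2)}\,dx$. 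Since $\wt A_\eps$ enters $h^\eps$ as an additive constant, $\mu_{h^\eps}(\C)=e^{\gamma x}\mu_{f+xg_\eps}(\C)$, so the event $E^\eps_\delta$ is equivalent to $\Phi_\eps(x):=x+\gamma^{-1}\log\mu_{f+xg_\eps}(\C)\in[-\delta,\delta]$, while $H^\eps=\{x\ge -|\log\eps|^{2/3}\}$. The problem thus reduces to analysing
\[
I_\eps(f)=\int_{\R} F(\bar\mu_{f+xg_\eps})\,\mu_{f+xg_\eps}(\C)^a\,\1\{\Phi_\eps(x)\in[-\delta,\delta]\}\,\1\{x\ge -|\log\eps|^{2/3}\}\,e^{-x^2/(2\sigma^2)}\,dx
\]
and then showing $\E^{\P^\eps_{\mathfrak c}}[I_\eps(h^\eps_L)]\to 2\delta\,\E^{\P_{\mathfrak c}}[F(\bar\mu_{h_L})\mu_{h_L}(\C)^a]$.

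For the inner integral I would use that $g_\eps$ is a deterministic continuous function, so $\mu_{f+xg_\eps}(dz)=e^{\gamma x g_\eps(z)}\mu_f(dz)$ and in particular $\mu_{f+xg_\eps}(\C)=\int_\C e^{\gamma x g_\eps}\,d\mu_f$. The constraint $\Phi_\eps(x)\in[-\delta,\delta]$ then cuts out an interval in $x$: since $\Phi_\eps'(x)=1+\int g_\eps\,d\bar\mu_{f+xg_\eps}$ and $g_\eps(z)=\log(|z|\vee1)/\log\eps\to 0$ on compacts with $|g_\eps|\le 1$, one has $\Phi_\eps'\to 1$, whence the Lebesgue measure of $\{x:\Phi_\eps(x)\in[-\delta,\delta]\}$ converges to $2\delta$. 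On this window $x$ is, for fixed $f$, bounded (close to $-\gamma^{-1}\log\mu_f(\C)$), so $\1_{H^\eps}\to 1$ and $e^{-x^2/(2\sigma^2)}\to 1$; here the point of $H^\eps$ is the quantitative bound $|\wt A_\eps g_\eps(z)|\le |\log\eps|^{-1/3}\log(|z|\vee1)\to 0$, which forces $x g_\eps\to 0$ uniformly on compacts and therefore $\mu_{f+xg_\eps}\to\mu_f$ both in total mass and weakly. Using continuity of $F$ and dominated convergence in $x$, I expect $I_\eps(f)\to 2\delta\,F(\bar\mu_f)\mu_f(\C)^a$.

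Finally I would pass to the outer expectation. By Proposition~\ref{prop:whole-plane}, $\P^\eps_{\mathfrak c}\to\P_{\mathfrak c}$ in total variation on bounded domains, which upgrades the pointwise statement to convergence in law of $I_\eps(h^\eps_L)$, and the limit is then identified as $2\delta\,\E^{\P_{\mathfrak c}}[F(\bar\mu_{h_L})\mu_{h_L}(\C)^a]$ once the interchange of limit and expectation is justified. This interchange — i.e.\ uniform integrability of $I_\eps(h^\eps_L)$ under $\P^\eps_{\mathfrak c}$ — is the main obstacle. The exponent is $a=\gamma^{-1}(2Q-3\gamma)=-s/\gamma$, which lies strictly below the threshold of Lemma~\ref{lem:moments}/Corollary~\ref{cor:moments}, so that $\E^{\P_{\mathfrak c}}[\mu_{h_L}(\C)^a]<\infty$ and the limiting right-hand side is finite; the remaining work is to produce a dominating bound of the shape $I_\eps(f)\lesssim_\delta \mu_f(\C)^a\cdot(\text{integrable factor})$ uniformly in $\eps$. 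This is exactly where $H^\eps$ enters when $\gamma<\sqrt2$ (so that $2Q-3\gamma>0$ and $a>0$): it excises the regime of very negative $\wt A_\eps$, in which the $\gamma$-mass escapes towards $\partial D^\eps$ and the perturbation $\wt A_\eps g_\eps$ is no longer negligible, and it is precisely this cutoff that restores the uniform moment control. For $\gamma\ge\sqrt2$ the drift $-(2Q-3\gamma)\log(|z|\vee1)$ is nonnegative and already suppresses this regime, consistent with the remark that $H^\eps$ may then be omitted.
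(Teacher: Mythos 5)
Your reduction is sound as far as it goes, and it parallels the paper's first step: conditioning on $\wt A_\eps$, using the prefactor $\sqrt{2\pi\Var[\wt A_\eps]}$ to cancel the Gaussian normalization, and identifying a window in $x$ of width roughly $2\delta$. (The paper organizes this part differently and more cleanly: rather than showing that the Lebesgue measure of $\{x:\Phi_\eps(x)\in[-\delta,\delta]\}$ tends to $2\delta$, it integrates over $x$ and uses the exact Fubini identity of Lemma~\ref{lem:Fubini}, which turns $\int_\R\E[Z\1_{E_\delta(x)}]\,dx$ into $2\delta\,\E[Z]$ with no error term and no monotonicity discussion of $\Phi_\eps$.) However, there is a genuine gap precisely where you write that uniform integrability is ``the main obstacle'' and that ``the remaining work is to produce a dominating bound'': you never produce it, and this domination is not a routine verification but the core of the proof. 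The paper obtains it by noting that for $|x|\le|\log\eps|^{2/3}$ the field $h^\eps_L+xg_\eps$ is sandwiched between the two extreme fields $h^\eps_L\pm|\log\eps|^{2/3}g_\eps$, so the window event is contained in $\hat E^\eps_\delta(x)=\{X_\eps\le -x+\delta,\ Y_\eps\ge -x-\delta\}$ with $X_\eps,Y_\eps$ the corresponding log-masses; the second Fubini identity \eqref{eq:F2} then computes $\int\E[\mu^a\1_{\hat E^\eps_\delta(x)}]\,dx$ exactly as $\E[\mu^a((Y_\eps-X_\eps+2\delta)\vee 0)]$, and Corollary~\ref{cor:moments} (applied with a slightly shifted exponent $a+c$) gives the uniform-in-$\eps$ integrable majorant. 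This is also exactly the step where the cases $a\le 0$ and $a>0$ (i.e.\ $\gamma\ge\sqrt2$ versus $\gamma<\sqrt2$) require different dominating quantities, a distinction your outline gestures at but does not implement.

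A second, related problem is your inner limit $I_\eps(f)\to 2\delta F(\bar\mu_f)\mu_f(\C)^a$: it is carried out for a \emph{fixed} field $f$, but the field you condition on is $h^\eps_L$, whose law itself depends on $\eps$, and $I_\eps$ is a functional of the field on all of $D^\eps=\eps^{-1}\D$, not merely on a compact set. Proposition~\ref{prop:whole-plane} gives total-variation convergence only on bounded domains, so it does not by itself upgrade your pointwise statement to convergence (in law, let alone of expectations) of $I_\eps(h^\eps_L)$: one must rule out that a nonnegligible fraction of the mass of $\mu_{h^\eps_L+xg_\eps}$ sits near $\partial D^\eps$, where $|g_\eps|$ is of order $1$ and your claims $\Phi'_\eps\to 1$ and $\mu_{f+xg_\eps}\to\mu_f$ break down. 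This is precisely the content of Lemma~\ref{lem:continuity}, whose proof requires the radial/angular decomposition and the annulus estimates $\E^{\P^\eps_{\mathfrak c}}[\mu^q(A_n)]\le b_1e^{-nb_2}$ to get both weak convergence of the normalized measures and convergence of the $q$-th moments. Your proposal implicitly assumes both conclusions; to be complete it would have to prove them, at which point it essentially reconstructs the paper's argument.
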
 

We postpone the proof of Lemma~\ref{lem:Gaussian2} in Section~\ref{subsec:key} and proceed to the proof of Theorem~\ref{thm:3pt}.
\begin{proof}[Proof of Theorem~\ref{thm:3pt}]
	Let $\P_{\mathfrak c}'$ be the probability measure we obtained via re-weighting $\P_{\mathfrak c}$ by $\mu_{h_L}^{a}(\C)$. Then by Theorem~\ref{thm:iofm} and Definition~\ref{def:DKRV-def}, the law of $\DKRVsph{\gamma,\gamma,\gamma}$ is given by $\bar \mu_{h_L}$ under $\P_{\mathfrak c}'$. 
	
	To prove Theorem~\ref{thm:3pt}, we first notice that since we are conditioning on $\mu_{h^\eps}$ having total mass between $[e^{-\gamma\delta},e^{\gamma\delta}]$ and we are letting $\delta\to 0$ eventually, it suffices to show that $\bar \mu_{h^\eps}$ converges to  $\bar\mu_{h_L}$ under $\P'_{\mathfrak c}$. In other words it suffices to show that for all $F$ as in Lemma~\ref{lem:Gaussian2} we have
	
	\[\lim_{\delta\to 0}\lim_{\eps\to 0}\E^{\P^\eps}[F(\bar \mu_{h^\eps})|E^\eps_\delta\cap H^\eps]=\E^{\P'_{\mathfrak c}}[F(\bar \mu_{h_L})].\]
	
	Recall that $a=\gamma^{-1}(2Q-3\gamma)$. We have
	\begin{equation}\label{eq:measure-change}
	\lim_{\delta\to 0}\lim_{\eps\to 0}\frac{ \E^{\P^\eps}[F(\bar\mu_{h^\eps})  \1_{E^\eps_\delta}\1_{H^\eps}] }{ \E^{\P^{\eps}}[E^\eps_\delta\cap H^\eps]}=\lim_{\delta\to 0}\lim_{\eps\to 0}\frac{ \E^{\Q^\eps}[F(\bar\mu_{h^\eps})  e^{-(2Q-3\gamma)A_\eps} \1_{E^\eps_\delta}\1_{H^\eps}] }{ \E^{\Q^\eps}[ e^{-(2Q-3\gamma) A_\eps}  \1_{E^\eps_\delta}\1_{H^\eps}]}.
	\end{equation}
	Now under the event $E^\eps_\delta$ we have
	\[
	e^{-(2Q-3\gamma) A_\eps}\propto e^{-(2Q-3\gamma)\wt A_\eps} =(1+O(\delta))\mu_{h^\eps_L+\wt A_\eps g_\eps(z)}^{a} (\C).
	\]
	Therefore the right hand side of \eqref{eq:measure-change} equals
	\begin{equation*}
	\lim_{\delta\to 0}\lim_{\eps\to 0}\frac{ \E^{\Q^\eps}[F(\bar\mu_{h^\eps_L+\wt A_\eps g_\eps(z)}) \mu_{h^\eps_L+\wt A_\eps g_\eps(z)}^{a} (\C) \1_{E^\eps_\delta}\1_{H^\eps}] }{ \E^{\Q^\eps}[ \mu_{h^\eps_L+\wt A_\eps g_\eps(z)}^{a} (\C) \1_{E^\eps_\delta}\1_{H^\eps}]}.
	\end{equation*}
	By Lemma~\ref{lem:Gaussian2}, it is equal to
	\[\frac{ \E^{\P_{\mathfrak c}}[F(\bar \mu_{h_L}) \mu_{h_L}^{a}(\C)]}{ \E^{\P_{\mathfrak c}}[ \mu_{h_L}^{a}(\C)]}= \E^{\P'_{\mathfrak c}}[F(\bar\mu_{h_L})],
	\]
	from which Theorem~\ref{thm:3pt} follows. 
\end{proof}

\subsection{Proof of Lemma \ref{lem:Gaussian2}}\label{subsec:key}
The proof of Lemma \ref{lem:Gaussian2} relies on two simple Fubini identities and a continuity result.
\begin{lemma}\label{lem:Fubini}
	Let $X,Y,Z$ be random variables where $\E[|Z|]<\infty$. For fixed $\delta>0$, let 
	$$E^1_\delta(x) = \{X \in [-x -\delta,\delta-x]\},E^2_\delta(x)=\{X\le-x+\delta,Y\ge-x-\delta\}.$$
	Then 
	\begin{align}
	\label{eq:F1}
	&\int_{-\infty}^\infty \E [Z1_{E^1_\delta(x)}]dx=2\delta \E[Z],\\
	\label{eq:F2}
	&\int_{-\infty}^\infty \E [Z1_{E^2_\delta(x)}]dx = \E[Z((Y-X+2\delta)\vee 0)].
	\end{align}
\end{lemma}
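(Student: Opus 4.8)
The plan is to prove both identities by Fubini--Tonelli, the only substantive content being the computation of the Lebesgue measure of the $x$-sections of the events $E^1_\delta(x)$ and $E^2_\delta(x)$; once these slice lengths are identified, both formulas fall out of a single interchange of $\int_\R dx$ and $\E$.

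First I would rewrite each event as a constraint on $x$ for fixed values of $X$ and $Y$. For \eqref{eq:F1}, the condition $X \in [-x-\delta,\,\delta-x]$ is equivalent to $x \in [-X-\delta,\,\delta-X]$, an interval of length exactly $2\delta$ regardless of the value of $X$. For \eqref{eq:F2}, the two conditions $X \le -x+\delta$ and $Y \ge -x-\delta$ translate to $x \le \delta - X$ and $x \ge -Y-\delta$ respectively, so the admissible $x$ fill the interval $[-Y-\delta,\,\delta-X]$, whose length is $(Y-X+2\delta)\vee 0$; indeed the interval is empty precisely when $X - Y > 2\delta$, and otherwise $(\delta - X)-(-Y-\delta) = Y-X+2\delta$.

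Next I would apply Tonelli. The maps $(\omega,x)\mapsto \1_{E^i_\delta(x)}(\omega)$ are jointly measurable and nonnegative, so for nonnegative $Z$ one may freely swap $\int_\R dx$ with $\E$, and the inner $x$-integral is exactly the slice length computed above:
\[
\int_\R \E\big[Z\,\1_{E^1_\delta(x)}\big]\,dx = \E\Big[Z\int_\R \1_{E^1_\delta(x)}\,dx\Big] = \E[Z\cdot 2\delta] = 2\delta\,\E[Z],
\]
and likewise $\int_\R \E[Z\,\1_{E^2_\delta(x)}]\,dx = \E[Z\,((Y-X+2\delta)\vee 0)]$. This already covers the case relevant to the application in Lemma~\ref{lem:Gaussian2}, where $Z$ is of the form $F(\cdot)\,\mu^{a}(\C)$ and hence nonnegative.

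For general signed $Z$ one decomposes $Z = Z^+ - Z^-$ and applies the nonnegative version to each part. In \eqref{eq:F1} the hypothesis $\E[|Z|]<\infty$ together with the uniform bound $\int_\R \1_{E^1_\delta(x)}\,dx = 2\delta$ guarantees joint integrability, so Fubini legitimately recombines the two pieces into $2\delta\,\E[Z]$. In \eqref{eq:F2} the same recombination is valid as soon as $\E[|Z|\,((Y-X+2\delta)\vee 0)]<\infty$, which is exactly the condition for the right-hand side to be finite; otherwise the two Tonelli identities for $Z^+$ and $Z^-$ still hold termwise. Since everything reduces to elementary interval arithmetic and one application of Tonelli, there is no genuine obstacle here — the only point requiring minor care is this passage from nonnegative to signed $Z$, handled by the positive/negative decomposition just described.
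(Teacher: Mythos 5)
Your proof is correct and follows essentially the same route as the paper: both arguments reduce the identities to a single Fubini/Tonelli interchange together with the observation that the $x$-section of each event is an interval of length $2\delta$, respectively $(Y-X+2\delta)\vee 0$. The only cosmetic differences are that the paper obtains \eqref{eq:F1} as the special case $X=Y$ of \eqref{eq:F2} rather than proving it directly, and that your decomposition $Z=Z^{+}-Z^{-}$ spells out an integrability point the paper leaves implicit.
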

\begin{proof}
	\eqref{eq:F1} is the special case of \eqref{eq:F2} when $X=Y$. So we only prove \eqref{eq:F2}. By Fubini Theorem the left hand side of \eqref{eq:F2} equals \[
	\E\left[\int_{-\infty}^\infty Z1_{\{ -Y-\delta\le x\le-X+\delta \}}dx\right] =\E[Z((Y-X+2\delta)\vee 0)]. \qedhere
	\]
\end{proof}

\begin{lemma}\label{lem:continuity}
	Suppose $a_\eps$ is a sequence of deterministic numbers tending to 0 as $\eps\to 0$. Let $h_L^\eps,h_L$ be defined as in Section~\ref{subsec:preliminary}. Then $\bar\mu_{h_L^\eps+a_\eps\log(|z|\vee 1)}$ converges in law to $\bar \mu_{h_L}$ in the space probability measure on $\C\cup\{\infty \}$ endowed with the weak convergence
	topology. Moreover, $\E^{\P^\eps_{\mathfrak c}} \big[\mu^q_{h_L^\eps+a_\eps\log(|z|\vee 1)}(\C)\big]$ converges to $\E^{\P_{\mathfrak c}}\big[\mu^q_{h_L}(\C)\big]$ for any $q<\tfrac{2}{\gamma}(Q-\gamma)$.
\end{lemma}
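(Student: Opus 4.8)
The plan is to split the statement into two claims — convergence in law of the normalized measures, and convergence of the $q$-th moments of the total mass — and to handle both by exploiting that $h_L^\eps \to h_L$ in total variation on bounded domains (Proposition~\ref{prop:whole-plane}) together with the fact that the deterministic drift $a_\eps \log(|z|\vee 1)$ is small. First I would dispose of the drift term: since $a_\eps \to 0$ and $\log(|z|\vee 1)$ is a fixed continuous function, the multiplicative effect on the chaos measure is $e^{\gamma a_\eps \log(|z|\vee 1)} = (|z|\vee 1)^{\gamma a_\eps}$, which tends to $1$ locally uniformly. Thus it suffices to compare $\mu_{h_L^\eps}$ with $\mu_{h_L}$ and then absorb the vanishing deterministic weight; concretely one writes $\mu_{h_L^\eps + a_\eps\log(|z|\vee 1)}(dz) = (|z|\vee 1)^{\gamma a_\eps}\mu_{h_L^\eps}(dz)$ and controls the discrepancy.

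For the convergence in law of $\bar\mu_{h_L^\eps + a_\eps\log(|z|\vee 1)}$, the key input is that $h^\eps_{\mathfrak c} \to h_{\mathfrak c}$ in total variation on every fixed bounded domain by Proposition~\ref{prop:whole-plane}. Because total-variation convergence allows a coupling in which the two fields agree with probability tending to $1$ on any fixed ball $B_R$, I would couple $h_L^\eps$ and $h_L$ so that they coincide on $B_R$; on this ball the chaos measures then literally agree, and the deterministic weight contributes a factor that is within $o_\eps(1)$ of $1$. The remaining work is to show that the mass outside $B_R$ is negligible for the normalized measure: the singularities at $z_1,z_2$ lie in $\D$, and the behaviour at $\infty$ is governed by the exponent $(2Q-3\gamma)$ through the term $-(2Q-3\gamma)\log(|z|\vee 1)$ in \eqref{eps}, which for $\gamma\in(0,2)$ makes the total mass finite and concentrates the measure near the origin as $R\to\infty$, uniformly in $\eps$. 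Normalizing then kills the overall constant and yields weak convergence of $\bar\mu_{h_L^\eps+a_\eps\log(|z|\vee 1)}$ to $\bar\mu_{h_L}$.

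For the moment convergence, I would first note that the admissible range $q < \tfrac{2}{\gamma}(Q-\gamma)$ is exactly the regime in which $\E^{\P_{\mathfrak c}}[\mu_{h_L}^q(\C)]<\infty$ by Lemma~\ref{lem:moments} and Corollary~\ref{cor:moments} (with the three singularities of weight $\gamma$, the bound $\min_i \tfrac{2}{\gamma}(Q-\alpha_i) = \tfrac{2}{\gamma}(Q-\gamma)$ is the binding constraint). The strategy is to obtain convergence of $\E^{\P^\eps_{\mathfrak c}}[\mu^q_{h_L^\eps+a_\eps\log(|z|\vee 1)}(\C)]$ by combining the coupling above, which gives convergence in probability of the total masses, with a uniform integrability argument: one shows that for some $q' > q$ still in the admissible range, $\sup_\eps \E^{\P^\eps_{\mathfrak c}}[\mu^{q'}_{h_L^\eps+a_\eps\log(|z|\vee 1)}(\C)] < \infty$, which upgrades convergence in probability to convergence of the $q$-th moments. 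The uniform bound follows because the finite-$\eps$ fields $h^\eps_L$ are, up to the vanishing drift, approximations of $h_L$ whose moment bounds can be established by the same estimate as in Lemma~\ref{lem:moments} with constants uniform in $\eps$; I expect the main obstacle here to be establishing this uniform-in-$\eps$ moment bound, since Lemma~\ref{lem:moments} and Corollary~\ref{cor:moments} are stated for the limiting field rather than for the whole approximating family, so one must either rerun the multifractal moment estimate with $\eps$-dependent Green's functions and check the constants do not degenerate, or dominate $h_L^\eps$ by a field with controllable moments via a Kahane-type comparison.
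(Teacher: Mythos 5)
Your outline reproduces the skeleton of the paper's argument — local total-variation convergence from Proposition~\ref{prop:whole-plane}, disposal of the drift $a_\eps\log(|z|\vee 1)$, then uniform-in-$\eps$ control of the mass far from the origin plus a uniform moment bound to get both conclusions — but the two steps you assert or defer are precisely the mathematical content of the lemma, so there is a genuine gap. First, the claim that the measure ``concentrates near the origin as $R\to\infty$, uniformly in $\eps$'' does not follow from reading off the exponent $2Q-3\gamma$ in \eqref{eps}: the field $h^\eps_L$ lives on the growing disk $D^\eps$ with zero boundary data (recentred on $\partial\D$), so its covariance structure near $\partial D^\eps$ differs substantially from that of $h_L$, and the perturbation $a_\eps\log(|z|\vee 1)$ — which in the later application has either sign — grows at infinity and enhances the tail mass. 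What is needed is a quantitative tail bound uniform over the whole family $\{h^\eps_L\}$, not a qualitative property of the limit field. Second, you explicitly leave the uniform-in-$\eps$ moment bound open, calling it ``the main obstacle''; it is indeed the main obstacle, and without it neither your uniform-integrability argument for the moments nor your tightness argument for the normalized measures can be run. In effect the proposal reduces the lemma to two unproved estimates.

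For comparison, the paper closes both gaps with a single computation. Fix $0<q<\tfrac2\gamma(Q-\gamma)$ and $c>0$ with $q<\tfrac2\gamma(Q-\gamma-c)$, and for small $\eps$ dominate the drift by $c\log(|z|\vee 1)$. Decompose $h^\eps_{\mathfrak c}=h_r+h_a$ into radial and angular parts: parametrized by $\log|z|$, the radial part is a Brownian bridge pinned to $0$ at $\partial\D$ and at $\partial D^\eps$, so on the annulus $A_n=e^n\D\setminus e^{n-1}\D$ its supremum is, up to a piece with uniform exponential moments, a centred Gaussian of variance at most $n$. Using independence of $h_r$ and $h_a$, a Gaussian computation produces the exponent $n\bigl(-q\gamma(Q-\gamma-c)+q^2\gamma^2/2\bigr)$, which is strictly negative exactly for $0<q<\tfrac2\gamma(Q-\gamma-c)$, while Kahane's convexity inequality bounds the $q$-th moment of the angular chaos on $A_n$ uniformly in $n$ (its covariance is $-\log|z-w|$ plus a kernel bounded uniformly in $n$). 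This yields $\E^{\P^\eps_{\mathfrak c}}\bigl[\mu^q_{h^\eps_L+c\log(|z|\vee1)}(A_n)\bigr]\le b_1e^{-nb_2}$ uniformly in $\eps$; summing over $n$ gives the uniform moment bound, the tail of the sum gives the uniform smallness of $\E^{\P^\eps_{\mathfrak c}}\bigl[\mu^q(\C\setminus R\D)\bigr]$, and the moments on $\D$ itself follow from Corollary~\ref{cor:moments} because on $\D$ the covariances of $h^\eps_{\mathfrak c}$ and $h_{\mathfrak c}$ differ by $O(1)$. Your fallback suggestion of a global Kahane comparison between $h^\eps_L$ and $h_L$ is not a shortcut as stated: one would have to verify a pointwise covariance domination on all of the growing domain $D^\eps$ (including near its boundary), and for $0<q<1$ the map $x\mapsto x^q$ is concave, so the comparison would be needed in the opposite direction as well. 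The annulus-by-annulus argument is what replaces these difficulties.
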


\begin{proof}
		By Proposition~\ref{prop:whole-plane}, $h_L^\eps$  restricted to $R\D$ converges to $h_L$ in total variation for all $R>0$. Moreover, $a_\eps \log(|z|\vee 1)$ converges to 0 locally uniformly in $\C$. Therefore on $R\D$, the random measure $\mu_{h_L^\eps+a_\eps\log(|z|\vee 1)}$ converges in total variation distance to $\mu_{h_L}$. 
		
		We will now argue that on the one hand for all $q <\tfrac{2}{\gamma}(Q-\gamma)$ and for some constant $C>0$
		\begin{equation}\label{mb}
\limsup_{\eps\to 0}\,	\E^{\P^\eps_{\mathfrak c}}\left[\mu^q_{h_L^\eps+a_\eps\log(|z|\vee 1)} (\C)\right] < C,
		\end{equation}
		and on the other hand
		\begin{equation}\label{mb2}
		\lim_{R\to\infty}\limsup_{\eps\to\infty}	\E^{\P^\eps_{\mathfrak c}} \left[\mu^q_{h_L^\eps+a_\eps\log(|z|\vee 1)}(\C \backslash R\D)\right]=0.
		\end{equation}

		Combined with the fact that $\E^{\P_{\mathfrak c}}[\mu^q_{h_L} (\C \backslash R\D)]=o_R(1)$, this will prove the convergence of $\bar\mu_{h_L^\eps+a_\eps\log(|z|\vee 1)}$ weakly in law to $\bar\mu_{h_L}$ and also the convergence in $q$-th moment of $\mu_{h_L^\eps+a_\eps\log(|z|\vee 1)} (\C)$ for all $q<\tfrac{2}{\gamma}(Q-\gamma)$. 
				
		Notice that on $\D$, the covariance kernels of $h_{\mathfrak c}$ and $h^\eps_{\mathfrak c}$ differ by at most a constant. Thus by Corollary \ref{cor:moments} we see that $$\E^{\P^\eps_{\mathfrak c}}[\mu^q_{h_L^\eps+a_\eps\log(|z|\vee 1)} (\D)] < \infty $$ for all $q <\tfrac{2}{\gamma}(Q-\gamma)$. In particular, \eqref{mb} already follows for negative moments.
		
		Now fix any $0< q<\tfrac{2}{\gamma}(Q-\gamma)$ and take $c > 0$ small enough such that $q<\tfrac{2}{\gamma}(Q-\gamma-c)$.	
		Notice that	for small enough $\eps$
		$$\mu_{h_L^\eps+a_\eps\log(|z|\vee 1)}(\C \backslash \D)<\mu_{h_L^\eps+c\log(|z|\vee 1)}(\C \backslash \D).$$ Let $A_n$ denote the annulus $e^n\D\backslash e^{n-1}\D$.
		We claim that $$\E^{\P^\eps_{\mathfrak c}}[\mu_{h_L^\eps+c\log(|z|\vee 1)}^q(A_n)] < b_1e^{-nb_2}$$ for some $b_1,b_2 > 0$. This proves both equations \eqref{mb} and \eqref{mb2}. 
	
		To show this, consider the radial decomposition of $h^\eps_{\mathfrak c} = h_r + h_a$. The radial part $h_r(z)$, parametrized by $\log |z|$ has the law of a zero-to-zero Brownian bridge with end points at $0$ and $-\log \eps$. Thus using \eqref{eps}, we can bound 
		$\E^{\P^\eps_{\mathfrak c}} \left[\mu^q_{h_L^\eps+c\log(|z|\vee 1)}(A_n)\right]$ by
		$$O(1)\lim_{\delta \to 0} \E^{\P^\eps_{\mathfrak c}}  \left[\left(\int_{A_n} \frac{1}{|z|^{\gamma(Q-\gamma -c)}}\exp(\gamma h_r(z))\frac{1}{|z|^{\gamma Q}} \delta^{\gamma^2/2} \exp(\gamma h_a^\delta(z)) dz\right)^q\right],$$
		where the writing suggests how we decompose radial and angular parts, and $O(1)$ absorbs the constant order difference $2\gamma \log |z| - \gamma \log |z_1-z| - \gamma \log |z_2-z|$.

		By using independence between the radial and the angular part,  bounding $h_r$ by $s_n = \sup_{|z| \in [e^{n-1},e^n]} h_r(z)$ and $|z|$ by $e^{n}$, we can further bound by
			$$O(1)\E[\exp(-nq\gamma(Q-\gamma-c) + q\gamma s_n) ]\lim_{\delta \to 0} \E^{\P^\eps_{\mathfrak c}} \left[\left( \int_{A_n} \frac{1}{|z|^{\gamma Q}} \delta^{\gamma^2/2} \exp(\gamma h_a^\delta(z)) dz\right)^q\right].$$
		Let us treat the first expectation on the radial part. We write $$s_n = h_r(e^{n-1}) + \sup_{|z| \in [e^{n-1},e^n]}	(h_r(z) -h_r(e^{n-1})).$$ The latter summand is just a Brownian piece of time duration 1, thus its exponential moments are uniformly bounded. For the former summand, it is a centred Gaussian whose variance is bounded from above by $n$. Thus from an explicit Gaussian calculation we get that:
		\[
		\E[\exp(-nq\gamma(Q-\gamma-c) + q\gamma s_n) ]\le O(1)\E [\exp(-nq\gamma(Q-\gamma-c) + q^2\gamma^2 n/2) ]
		\]	
		But now $-q\gamma(Q-\gamma-c) + q^2\gamma^2/2$ is exactly zero at $q=0$ and at $q = \tfrac{2}{\gamma}(Q - \gamma -c)$ and in particular it is negative for any $q<\tfrac{2}{\gamma}(Q-\gamma-c)$. 
		
		Finally, for the angular part one can use standard results of Gaussian multiplicative chaos theory: as the covariance kernel of the angular part in each $A_n$ writes $- \log |z-w| + g(z,w)$ with $g(z,w)$ uniformly bounded in $n$, one can use Kahane's convexity inequality \cite[Theorem~2.1]{RV_Review} to see that for $q' < 4/\gamma^2$, the $q'$-th moment of the angular part is bounded uniformly in $n$ (e.g. see Lemma 5.4 in \cite{mating} for an almost equivalent statement and proof.). Now we obtain \eqref{mb} and \eqref{mb2} and conclude the proof.
\end{proof}

\begin{proof} [Proof of Lemma~\ref{lem:Gaussian2}]
	Denote the left hand side of \eqref{eq:Gaussian2} by $L_\eps$ and let 
	\[
	E^\eps_\delta(x)=\{ \gamma^{-1}\log\mu_{h^\eps_L+xg_\eps(z)}(\C)\in [-x-\delta,-x+\delta]\}.
	\]
	Then by conditioning on $\wt A_\eps=x$, we have
	\begin{equation}\label{I1}
	L_\eps=\int_{-|\log\eps|^{\frac23}}^\infty \E^{\P_{\mathfrak c}^\eps}[F(\bar\mu_{h_L^\eps+xg_\eps(z)}) \mu_{h^\eps_L+x g_\eps(z)}^{a}(\C) \1_{E^\eps_\delta(x)}] \exp\left\{-\frac{x^2}{2|\log\eps|}\right\}dx.
	\end{equation}
	Let
	\[
	\Sigma_\eps= \int 1_{x\in [-|\log\eps|^{\frac23},|\log\eps|^{\frac23}]} \E^{\P_{\mathfrak c}^\eps}[F(\bar\mu_{h_L^\eps+xg_\eps(z)}) \mu_{h^\eps_L+x g_\eps(z)}^{a}(\C) \1_{E^\eps_\delta(x)}] \exp\left\{-\frac{x^2}{2|\log\eps|}\right\}  dx
	\] 
	and $\Gamma_\eps=L_\eps-\Sigma_\eps$. 
	If $a\le 0$  (i.e. $\gamma\ge \sqrt{2}$), then 
	\[\mu_{h^\eps_L + x g_\eps(z)} (\C) \ge \mu_{h^\eps_L + x g_\eps(z)} (\D)=\mu_{h^\eps_L} (\D)\]
	and we have 
	\begin{equation}\label{eq:Gamma}
	\Gamma_\eps\le  O_F(1) \int_{x> |\log\eps|^{\frac23}} \E^{\P^\eps_{\mathfrak c}}[\mu^a_{h^\eps_L} (\D)]  \exp\left\{-\frac{x^2}{2|\log\eps|}\right\}dx=o_\eps(1).
	\end{equation}
	If $a>0$ (i.e. $\gamma<\sqrt{2}$), we get
	\[
	\Gamma_\eps\le  O_F(1) \int_{x> |\log\eps|^{\frac23}} e^{-\gamma ax}  \exp\left\{-\frac{x^2}{2|\log\eps|}\right\}  \P^\eps_{\mathfrak c}[E^\eps_\delta(x)] dx=o_\eps(1).
	\] Here $e^{-\gamma a x}$ comes from the fact that if $E^\eps_\delta(x)$ occurs and $\wt A_\eps=x$, then $\mu_{h^\eps_L+x g_\eps(z)}e^{\gamma x}=1+o_\delta(1)$. 

	It remains to deal with $\Sigma_\eps$. We first claim that the integrand converges pointwise, i.e. that for all fixed $x \in \R$ we have that 
		$$ 1_{x\in [-|\log\eps|^{\frac23},|\log\eps|^{\frac23}]} \E^{\P_{\mathfrak c}^\eps}[F(\bar\mu_{h_L^\eps+xg_\eps(z)}) \mu_{h^\eps_L+x g_\eps(z)}^{a}(\C) \1_{E^\eps_\delta(x)}] \exp\left\{-\frac{x^2}{2|\log\eps|}\right\}$$ converges to $$\E^{\P_{\mathfrak c}}[F(\bar\mu_{h_L}) \mu_{h_L}^{a}(\C) \1_{E_\delta(x)}].$$
		Taking $a_\eps=x/\log\eps$ in  Lemma~\ref{lem:continuity}, we have that $\bar\mu_{h_L^\eps+xg_\eps(z)}$ converges weakly in law to $\bar \mu_{h_L}$ and $\mu_{h^\eps_L+x g_\eps(z)}(\C)$ converges to $\mu_{h_L}(\C)$ in $L^q$ for $0<q<\tfrac{2}{\gamma}(Q-\gamma)$. Moreover, the argument of Lemma~\ref{lem:continuity} also gives joint convergence in law. Then using dominated convergence theorem, we have
		$$\lim_{\eps\to 0}\E^{\P_{\mathfrak c}^\eps}[F(\bar\mu_{h_L^\eps+xg_\eps(z)}) \mu_{h^\eps_L+x g_\eps(z)}^{a}(\C) \1_{E^\eps_\delta(x)}] = \E^{\P_{\mathfrak c}}[F(\bar\mu_{h_L}) \mu_{h_L}^{a}(\C) \1_{E_\delta(x)}]$$ and the claim follows.
	
	Now let $E_\delta(x)=\{ \gamma^{-1}\log\mu_{h_L}(\C)\in [-x-\delta,-x+\delta]\}$. Observe that by Lemma~\ref{lem:Fubini}
	\[
	\int_{-\infty}^{\infty}\E^{\P_{\mathfrak c}}[F(\bar\mu_{h_L}) \mu_{h_L}^{a}(\C) \1_{E_\delta(x)}]\ dx=2\delta\E^{\P_{\mathfrak c}}[F(\bar \mu_{h_L}) \mu_{h_L}^{a}(\C)]=
	\textrm{RHS of \eqref{eq:Gaussian2}}.
	\]
	Therefore to finish the proof we only need to show that the dominated convergence theorem can be applied to evaluate $\lim_{\eps\to 0} \Sigma_\eps$.
	
	When $a\le 0$, since $ g_\eps(z)=0$ on $\D$, the integrand inside $\Sigma_\eps$ is controlled by 
	\begin{equation}
	\label{eq:control}
	O_F(1)  \E^{\P_{\mathfrak c}^\eps}[\mu_{h^\eps_L}^{a}(\D) \1_{\hat E^\eps_\delta(x)}].
	\end{equation}
	By Lemma~\ref{lem:Fubini}
	\[
	\int^\infty_{-\infty}\E^{\P_{\mathfrak c}^\eps}[\mu_{h^\eps_L}^{a}(\D) \1_{\hat E^\eps_\delta(x)}]\ dx=\E^{\P_{\mathfrak c}^\eps}[\mu_{h^\eps_L}^{a}(\D)((Y_\eps-X_\eps+2\delta)\vee 0)].
	\]
	where 
	\begin{align*}
	& X_\eps=\gamma^{-1}\log\mu_{h^\eps_L+|\log\eps|^{\frac23}g_\eps(z)}(\C)\;
	\textrm{and}\; Y_\eps=\gamma^{-1}\log\mu_{h^\eps_L-|\log\eps|^{\frac23}g_\eps(z)}(\C),\\
	& \hat E^\eps_\delta(x)=\{ X_\eps \le-x+\delta,Y_\eps \ge -x-\delta\}.
	\end{align*}
	By Corollary~\ref{cor:moments}, for $c > 0$ small enough, we have that $\E^{\P_{\mathfrak c}}[\mu_{h_L + c\log(|z|\vee 1)}^{a+c}] < \infty$. Thus by the dominated convergence theorem,
	\[
	\lim_{\eps\to 0}\E^{\P_{\mathfrak c}^\eps}[\mu_{h^\eps_L}^{a}(\D)((Y_\eps-X_\eps+2\delta)\vee 0)]=2\delta\E^{\P_{\mathfrak c}}[\mu_{h^L}^{a}(\D)].
	\] Since
	\[
	\int^\infty_{-\infty}\E^{\P_{\mathfrak c}}[\mu_{h_L}^{a}(\D) \1_{E_\delta(x)}] \ dx=2\delta\E^{\P_{\mathfrak c}}[\mu_{h_L}^{a}(\D)],
	\]
	we have
	\[
	\lim_{\eps\to 0}\int^\infty_{-\infty}\E^{\P_{\mathfrak c}^\eps}[\mu_{h^\eps_L}^{a}(\D) \1_{\hat E^\eps_\delta(x)}]\ dx=\int^\infty_{-\infty}\E^{\P_{\mathfrak c}}[\lim_{\eps\to 0}\mu_{h^\eps_L}^{a}(\D) \1_{\hat E^\eps_\delta(x)}]\ dx.
	\]
	Now we can apply the dominated convergence theorem to evaluate $\lim_{\eps\to 0}\Sigma_\eps$.
	
	When $a>0$, (i.e. $\gamma<\sqrt{2}$), we replace the term in \eqref{eq:control} by 
	\begin{equation*}
	O_F(1)  \E^{\P_{\mathfrak c}^\eps}[\mu_{h^\eps_L+\log|\eps|^{\frac23} g_\eps(z) }^{a} (\C) \1_{\hat E^\eps_\delta(x)}].
	\end{equation*} 
	and the rest of the argument works line by line.
\end{proof}

\begin{remark}\label{rmk:Heps}
	Notice that when $a\le 0$ (i.e. $\gamma \geq \sqrt{2}$) the estimate in \eqref{eq:Gamma} still holds if we integrate against the whole $\R$ instead of $[-|\log\eps|^{2/3},\infty]$. This means that in Lemma~\ref{lem:Gaussian2} and hence also in Theorem~\ref{thm:3pt}, we can replace $E_\delta^\eps\cap H^\eps$ by $E_\delta^\eps$ if $\gamma\in[\sqrt 2,2)$. This does not directly work if $\gamma<\sqrt 2$, since $a$ is now positive and the integral over $[-\infty, -|\log\eps|^{2/3}]$ is not as easily controlled.
\end{remark}

\section{Equivalence of 3-point spheres}\label{sec:equi}
In this section we finish the proof of Theorem~\ref{thm:main}. By the Mobius invariance of $\DKRVsph{\gamma,\gamma,\gamma}$  in Theorem~\ref{thm:coordinate-change} and the relation between $\DMSsph{2}$ and $\DMSsph{3}$ in Definition~\ref{def:DMS-sphere}, it suffices to show that
\begin{theorem}\label{thm:main'}
	$\DKRVsph{\gamma,\gamma,\gamma}$ with marked points at $0, 1, \infty$ can be obtained by first sampling a third point $w$ from $\DMSsph{2}$  with marked points at $0,\infty$ and then applying the Mobius transformation that maps $(0,\infty,w)$ to $ (0,\infty,1)$.
\end{theorem}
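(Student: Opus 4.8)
The plan is to exhibit a single approximating family of decorated random surfaces whose $\eps\to 0$, $\delta\to 0$ limit can be read off in two different ways, once as the object described in Theorem~\ref{thm:main'} and once as $\DKRVsph{\gamma,\gamma,\gamma}$ at $0,1,\infty$. I would start from the two-point scheme of Proposition~\ref{prop:2ptlim}: a field $h$ carrying a single $\gamma$-log singularity at $0$ with a marked point at $\infty$, conditioned to have quantum area near $1$, converges to $\DMSsph{2}$. Following Definition~\ref{def:DMS-sphere}, I then sample a third point $w$ from the normalized quantum area measure $\bar\mu_h$ and apply the Mobius map $\psi_w$ fixing $0,\infty$ with $\psi_w(w)=1$. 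By Proposition~\ref{prop:2ptlim} together with the continuity of sampling a point from a weakly convergent family of measures, this decorated family converges to the law obtained by sampling $w$ from $\DMSsph{2}$ and pushing forward by $\psi_w$; this is exactly the law built in the statement of Theorem~\ref{thm:main'}.

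The crux is that the very same family also converges to $\DKRVsph{\gamma,\gamma,\gamma}$. For this I would invoke the rooted-measure relationship between sampling from Gaussian multiplicative chaos and inserting a $\gamma$-log singularity, a form of the Cameron-Martin theorem. Working first with circle-average regularizations at scale $\eta$, sampling $w$ from $\mu_h$ and averaging a test functional equals integrating $dw$ against the expectation in which the field law is tilted by $e^{\gamma h_\eta(w)}$; by Girsanov this tilt shifts the field by $\gamma G_{D^\eps}(\cdot,w)$, that is, it inserts a $\gamma$-log singularity at $w$ and produces, up to an explicit $w$-dependent density, precisely the two-interior-singularity field $h^\eps$ of \eqref{def:h} with $z_1=0$ and $z_2=w$. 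In this way the decorated approximation is re-expressed as a mixture over $w$ of the Section~\ref{sec:3pt} scheme.

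For each fixed $w$, Theorem~\ref{thm:3pt} identifies the limit of that scheme with $\DKRVsph{\gamma,\gamma,\gamma}$ carrying singularities at $0,w,\infty$; applying $\psi_w$ and the Mobius covariance of the unit-volume Liouville measure (Theorem~\ref{thm:coordinate-change}) turns every term of the mixture into $\DKRVsph{\gamma,\gamma,\gamma}$ at $0,1,\infty$, which no longer depends on $w$. The $w$-integral then collapses --- this is exactly the conformal equivalence of all three-point configurations $(0,\infty,w)\cong(0,\infty,1)$ that makes $k=3$ special --- so the mixture reduces to $\DKRVsph{\gamma,\gamma,\gamma}$. Since a convergent family has a unique limit, the two readings must agree, which is Theorem~\ref{thm:main'}; combined with Theorem~\ref{thm:coordinate-change} and Definition~\ref{def:DMS-sphere} this gives Theorem~\ref{thm:main}.

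The hard part will be making the rooted-measure step and the interchange of the $\eps$-limit with the $w$-integration and the conditioning fully rigorous. Concretely, I must check that the $w$-dependent density produced by the Girsanov tilt matches the DKRV reweighting by $\mu^{a}(\C)$ with $a=\gamma^{-1}(2Q-3\gamma)$, that the conditioning events transform consistently --- the area constraint of Proposition~\ref{prop:2ptlim} must correspond, after insertion of the singularity at $w$, to the events $E^\eps_\delta\cap H^\eps$ of Theorem~\ref{thm:3pt} --- and that uniform integrability permits passing the limit inside the integral. I expect these estimates to be of the same flavour as, and to rely on, the moment bounds of Lemma~\ref{lem:continuity} and the Fubini and dominated-convergence scheme of Lemma~\ref{lem:Gaussian2}; reconciling the fixed-domain conditioning of Proposition~\ref{prop:2ptlim} with the growing-domain, circle-average conditioning of Section~\ref{sec:3pt} is the delicate bookkeeping at the heart of the argument.
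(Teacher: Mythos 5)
Your overall skeleton is the same as the paper's: one approximating family (the single-singularity field of \eqref{eq:h2pt} conditioned on its quantum area, decorated by a point sampled from that area), read once via Proposition~\ref{prop:2ptlim} to produce the law in the statement (this is the paper's Proposition~\ref{prop:2pt}), and once via the rooted-measure/Girsanov identity to produce a two-singularity field and hence, through Section~\ref{sec:3pt}, the DKRV sphere (the paper's Proposition~\ref{prop:3pt}). One bookkeeping point you leave implicit is worth making explicit, because it is the punchline of the paper's deduction of Theorem~\ref{thm:main'}: sampling the point from the \emph{normalized} area under $\P^\eps_\delta$ (the DMS reading) versus from the mass-biased rooted measure (the DKRV reading) corresponds to the paper's $\P^\eps_\delta$ versus $\hat\P^\eps_\delta$, and these differ in total variation by $o_\delta(1)$ precisely because $\mu_{h^\eps}(\C)\in[e^{-\gamma\delta},e^{\gamma\delta}]$ on the conditioning event. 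Also, the Girsanov density of the rooted measure is not something that must ``match'' the weight $\mu^{a}(\C)$: that weight is produced internally by the zero-mode integration inside the proof of Theorem~\ref{thm:3pt}; the only density to control is the mass-bias $\mu_{h^\eps}(\C)=1+O(\delta)$.

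The genuine gap is in your DKRV reading: you apply Theorem~\ref{thm:3pt} ``for each fixed $w$'' and then collapse the $w$-integral, but under $\P^\eps_\delta$ the sampled point $w^\eps$ is \emph{not} tight in $\C\setminus\{0\}$, so the mixture carries asymptotically no mass at any fixed $w$ and a pointwise-in-$w$ limit plus dominated convergence cannot produce the answer. Concretely, in cylindrical coordinates the radial part is a Brownian motion with drift $-(Q-\gamma)$ started at depth $(Q-\gamma)\log\eps$, and conditioned on the area event the location of its maximum --- hence $-\log|w^\eps|$ --- fluctuates on the scale $|\log\eps|^{1/2}$ (this is the inverse-Gaussian variance behind Lemma~\ref{lem:hitting}); thus $\P^\eps_\delta[w^\eps\in K]\to 0$ for every fixed compact $K\subset\C\setminus\{0\}$. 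Moreover Theorem~\ref{thm:3pt} requires fixed $z_1,z_2\in\D$ and conditioning on $E^\eps_\delta\cap H^\eps$, and for $\gamma<\sqrt2$ the paper explicitly cannot drop $H^\eps$, so even along admissible locations of $w$ you must show the circle-average event occurs with probability $1-o_{\eps,\delta}(1)$. The paper's resolution supplies exactly the ingredients your plan is missing: it applies the Mobius map $z\mapsto z/\hat w^\eps$ \emph{before} taking limits, so the singularity sits at $1$ while the domain becomes the random disk $\hat D^\eps=|\hat w^\eps|^{-1}D^\eps$ and the constant $2Q-\gamma$ acquires a random perturbation $q_\eps$ (see \eqref{eq:hath}--\eqref{eq:rescale}); it then proves the location estimate $\log|\hat w^\eps|=o(|\log\eps|^{2/3})$ (Lemma~\ref{lem:location}), proves that the circle-average event is asymptotically full (Lemma~\ref{lem:cond}), and finally checks that the whole Section~\ref{sec:3pt} argument is stable under replacing $2Q-3\gamma$ by $2Q-3\gamma+b_\eps$ for a deterministic vanishing sequence $b_\eps$, after conditioning on $(\hat D^\eps,q_\eps)$. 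Without these three steps, the interchange you flag as ``the hard part'' is not merely delicate; it rests on a false picture of where $w^\eps$ lives as $\eps\to 0$.
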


We prove Theorem~\ref{thm:main'} by constructing two very close limiting procedures. The first one gives  $\DMSsph{3}$ embedded such that the marked points are $0,1,\infty$. The second one gives $\DKRVsph{\gamma,\gamma,\gamma}$ with marked points $0,1,\infty$. 

Let $D^\eps,h^\eps_0,G_{D^\eps}$ be defined as in Section~\ref{sec:3pt} and
\begin{equation}
\label{eq:h2pt}
h^\eps(z)= h^\eps_0(z)+(2Q-\gamma)\log\eps+\gamma G_{D^\eps}(z,0).
\end{equation} Notice that $h^\eps$ here is \emph{not} the same as the one defined in Section~\ref{sec:3pt}, indeed here $h^\eps$ only has a single log singularity inside $D^\eps$.

Denote $E^\eps_\delta=\{\mu_{h^\eps}(\C)\in [e^{-\gamma\delta},e^{\gamma\delta}] \}$, let $\P^\eps_\delta$ be the law of $h^\eps$ conditioning on $E^\eps_\delta$ and $d\hat \P^\eps_\delta=c \mu_{h^\eps(\C)} d\P^\eps_\delta$ where $c$ is chosen to make $\hat \P^\eps_\delta$ a probability measure. Given an instance of $h^\eps$ under $\P^\eps_\delta$, let $w^\eps$ be a point sampled according to the quantum area. Let $\hat w^\eps$ be sampled in the same way as $w^\eps$ with $\hat \P^\eps_\delta$ in place of $\P^\eps_\delta$. Now we claim that 
\begin{proposition}\label{prop:2pt}
	Suppose $(w^\eps,h^\eps)$ is sampled from $\P^\eps_\delta$ as above.  Apply the Mobius transformation that maps $w^\eps$ to 1 and fixes $0$ and $\infty$.  Let $h^\eps$ transform according to the coordinate change rule as in Definition \ref{def:srf}. If we first let $\eps\to 0$ then $\delta\to 0$, then   the resulting Liouville area measure converges weakly in law to that of $\DMSsph{3}$ with the embedding chosen such that the marked points are $0,1,\infty$ .
\end{proposition}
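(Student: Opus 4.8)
The plan is to read the conditioned field $h^\eps$ of \eqref{eq:h2pt} as a single-singularity approximation of $\DMSsph{2}$, to turn it into an approximation of $\DMSsph{3}$ by adding the area-sampled point $w^\eps$, and to transport everything through the embedding that sends $w^\eps$ to $1$.

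First I would identify $(h^\eps,\C,0,\infty)$ conditioned on $E^\eps_\delta$ with the object of Proposition~\ref{prop:2ptlim}. By conformal invariance of the Dirichlet Green function $\gamma G_{D^\eps}(z,0)=-\gamma\log|z|-\gamma\log\eps$, so that $h^\eps=h_0^\eps-\gamma\log|z|+(2Q-2\gamma)\log\eps$. Applying the scaling $\phi(w)=w/\eps$, which sends $\D$ onto $D^\eps$ and fixes $0$ and $\infty$, together with the coordinate-change rule of Definition~\ref{def:srf}, the surface $(h^\eps,\C,0,\infty)$ becomes $(\tilde h,\C,0,\infty)$ with $\tilde h=\tilde h_0-\gamma\log|z|-C$, where $\tilde h_0$ is a zero-boundary GFF on $\D$ and $C=(Q-\gamma)|\log\eps|\to\infty$ as $\eps\to0$. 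Since $\mu_{h^\eps}$ is carried by $D^\eps$, we have $\mu_{h^\eps}(\C)=\mu_{h^\eps}(D^\eps)=\mu_{\tilde h}(\D)$, so $E^\eps_\delta$ is precisely the event on which Proposition~\ref{prop:2ptlim} conditions. Hence $(h^\eps,\C,0,\infty)\mid E^\eps_\delta$ converges, as $\eps\to0$ and then $\delta\to0$, to $\DMSsph{2}$ with marked points $0,\infty$.

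Next I would pass from $\DMSsph{2}$ to $\DMSsph{3}$ using the sampled point. By Definition~\ref{def:DMS-sphere}, $\DMSsph{3}$ is exactly $\DMSsph{2}$ together with one further point sampled from the normalized quantum area. On $E^\eps_\delta$ the total mass lies in $[e^{-\gamma\delta},e^{\gamma\delta}]$, hence is bounded away from $0$, so the map sending a finite measure to its normalization, and thus to the law of a point sampled from it, is continuous for the weak topology. Combining this with a Skorokhod coupling of the measure convergence from the previous step, I would upgrade the convergence of $\mu_{h^\eps}$ to joint convergence in law of the pair $(\mu_{h^\eps},w^\eps)$ to the area measure of $\DMSsph{2}$ together with an independent area-sampled point, that is to $\DMSsph{3}$ seen as a three-marked-point surface. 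The delicate conceptual point here is that two marked points do not fix an embedding, so the convergence of $\DMSsph{2}$ has to be read as convergence of quantum surfaces and then compared across embeddings through the coordinate-change rule.

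Finally I would transport this convergence through the embedding $\psi^\eps(z)=z/w^\eps$, which fixes $0,\infty$ and sends $w^\eps$ to $1$; the field transforms by the rule of Definition~\ref{def:srf} and the Liouville measure becomes $(\psi^\eps)_*\mu_{h^\eps}$. A point sampled from the limiting $\DMSsph{2}$ area measure lies almost surely in the bulk, hence is distinct from $0$ and $\infty$, so the embedding map $(\mu,w)\mapsto(\psi_w)_*\mu$ is almost surely continuous at the limiting configuration, and the continuous mapping theorem applied to the joint convergence of $(\mu_{h^\eps},w^\eps)$ gives convergence of the embedded measures to that of $\DMSsph{3}$ embedded with marked points $0,1,\infty$. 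The main obstacle is precisely this joint convergence together with the non-degeneracy of $w^\eps$: one has to rule out that the area-sampled point escapes towards $0$ or $\infty$, where $\psi^\eps$ degenerates, which calls for a tightness estimate on $w^\eps$ to complement the almost sure non-degeneracy of the limiting point.
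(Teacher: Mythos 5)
Your proposal is correct and takes essentially the same route as the paper's proof: identify the conditioned surface $(h^\eps,\C,0,\infty)$ with the setup of Proposition~\ref{prop:2ptlim} via the scaling $z\mapsto \eps z$ and the coordinate-change rule, upgrade the weak convergence to an a.s.\ coupling so that the sampled points $w^\eps$ converge jointly with the measures to an area-sampled point $w$ of $\DMSsph{2}$, and then push through the Mobius map $z\mapsto z/w^\eps$, using that $w$ is a.s.\ in the bulk. (As a minor remark, your constant $C=(Q-\gamma)|\log\eps|$ is the correct value of the shift, whereas the paper states $(2\gamma-3Q)\log\eps$; this discrepancy is immaterial since Proposition~\ref{prop:2ptlim} only requires $C\to\infty$.)
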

\begin{proof}
	Since $G_{D^\eps}(z,0)=-\log|\eps z|$, the field in \eqref{eq:h2pt} can be written as $$h^\eps=h^\eps_0-\gamma\log|z| + (2Q-2\gamma)\log\eps.$$ We now set the constant $C$ in Proposition~\ref{prop:2ptlim} to be $(2\gamma-3Q)\log\eps$ and apply the coordinate change $z\mapsto \eps z$, then Proposition~\ref{prop:2ptlim} yields that $(h^\eps,\C,0,\infty)$ converges to $\DMSsph{2}$ in the sense that in the maxima embedding, the area measures converge weakly. Therefore we have a coupling of $(h^\eps,\C,0,\infty)$, $\DMSsph{2}$, and points $w^\eps$ and $w$ sampled from $\mu_{h^\eps}$ and $\DMSsph{2}$ respectively, such that under maximal embedding $\mu_{h^\eps}$ a.s. converges to $\DMSsph{2}$ and $\lim_{\eps\to 0}|w^\eps-w|=0$ in probability. Now Proposition~\ref{prop:2pt} follows.
\end{proof}
We also have an analogous statement for $\DKRVsph{\gamma,\gamma,\gamma}$:
\begin{proposition}\label{prop:3pt}
	Suppose $(\hat w^\eps,h^\eps)$ is sampled from $\hat\P^\eps_\delta$ defined above.  Apply the Mobius transformation that maps $\hat w^\eps$ to 1 and fixes $0$ and $\infty$.  Let $h^\eps$ transform according to the coordinate change rule. If we first let $\eps\to 0$ then $\delta\to 0$, then the resulting Liouville area measure converges weakly in law to that of $\DKRVsph{\gamma,\gamma,\gamma}$ with  marked points  at $0,1,\infty$.
\end{proposition}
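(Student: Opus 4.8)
The plan is to reduce the rooted, area--reweighted single--singularity setup of $\hat\P^\eps_\delta$ to the two--interior--singularity scheme of Section~\ref{sec:3pt} and then to invoke Theorem~\ref{thm:3pt} together with the M\"obius covariance of Theorem~\ref{thm:coordinate-change}. First I would record the joint law of $(\hat w^\eps,h^\eps)$. Since $d\hat\P^\eps_\delta=c\,\mu_{h^\eps}(\C)\,d\P^\eps_\delta$ and $\hat w^\eps$ is then sampled from the normalized area $\bar\mu_{h^\eps}$, the weight $\mu_{h^\eps}(\C)$ exactly cancels the normalization of $\bar\mu_{h^\eps}$, so the joint law of $(\hat w^\eps,h^\eps)$ is proportional to $\mu_{h^\eps}(d\hat w^\eps)\,\1_{E^\eps_\delta}\,d\P^\eps$, where $h^\eps$ is the single--interior--singularity field of \eqref{eq:h2pt}.

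The key step is the Girsanov (rooting) identity for Gaussian multiplicative chaos: for any bounded functional $\Phi$,
\[
\E^{\P^\eps}\!\big[\Phi(h^\eps)\,\mu_{h^\eps}(dw)\big]=\theta^\eps(w)\,\E^{\P^\eps}\!\big[\Phi\big(h^\eps+\gamma G_{D^\eps}(\cdot,w)\big)\big]\,dw,
\]
where $\theta^\eps(w)$ is the deterministic one--point intensity. Applying this with $\Phi=F(\bar\mu_{\,\cdot\,})\,\1_{E^\eps_\delta}(\cdot)$ shows that, conditionally on $\hat w^\eps=w$, the sampled field is distributed as $h^\eps+\gamma G_{D^\eps}(\cdot,w)$, which is \emph{exactly} the two--singularity field \eqref{def:h} of Section~\ref{sec:3pt} with $z_1=0$ and $z_2=w$; correspondingly the tracked measure $\mu_{h^\eps}$ becomes the three--singularity chaos $\mu_{h^\eps+\gamma G_{D^\eps}(\cdot,w)}$. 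Moreover the indicator is evaluated at the \emph{shifted} field, so $\1_{E^\eps_\delta}$ becomes precisely $\{\mu_{h^\eps+\gamma G_{D^\eps}(\cdot,w)}(\C)\in[e^{-\gamma\delta},e^{\gamma\delta}]\}$, i.e. the event $E^\eps_\delta$ of Theorem~\ref{thm:3pt} for that three--singularity field. Thus $\mu_{h^\eps}$, conditioned on $\hat w^\eps=w$ and on $E^\eps_\delta$, is exactly the object of Theorem~\ref{thm:3pt} with marked points $0,w,\infty$, up to the conditioning on $H^\eps$.

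Granting that $H^\eps$ may be dropped (see below), Theorem~\ref{thm:3pt} then gives, for each fixed $w$, convergence of the conditioned measure to $\DKRVsph{\gamma,\gamma,\gamma}$ with singularities at $0,w,\infty$ as $\eps\to0$ and then $\delta\to0$. Applying the M\"obius map $\psi(z)=z/\hat w^\eps$, which fixes $0,\infty$ and sends $\hat w^\eps$ to $1$, and transforming the field by the coordinate--change rule, Theorem~\ref{thm:coordinate-change} identifies the pushforward of $\DKRVsph{\gamma,\gamma,\gamma}$ at $0,w,\infty$ with $\DKRVsph{\gamma,\gamma,\gamma}$ at $0,1,\infty$ --- crucially, independently of $w$. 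Hence the randomness of the marked point integrates out and the limit is $\DKRVsph{\gamma,\gamma,\gamma}$ at $0,1,\infty$. To push the fixed--$w$ convergence through the $w$--average one uses dominated convergence together with the moment bounds of Lemma~\ref{lem:moments}/Corollary~\ref{cor:moments}, and the fact, inherited from the $\DMSsph{2}$ side via Proposition~\ref{prop:2pt}, that $\hat w^\eps$ stays at positive quantum distance from $0,\infty$, so that $\psi$ remains nondegenerate in the limit.

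The main obstacle is the removal of the event $H^\eps$ when $\gamma<\sqrt2$ (equivalently $a=\gamma^{-1}(2Q-3\gamma)>0$). For $\gamma\ge\sqrt2$ this is immediate from Remark~\ref{rmk:Heps}. For $\gamma<\sqrt2$ the difficulty is exactly the one flagged there: on $\{\wt A_\eps<-|\log\eps|^{2/3}\}$ the weight $\mu^{a}(\C)$ with $a>0$ is large, and one must show that this is outweighed by the small probability that $E^\eps_\delta$ holds on that region. I would handle this by re--running the tail estimate for $\Gamma_\eps$ from the proof of Lemma~\ref{lem:Gaussian2} in the present rooted frame, where the extra intensity $\theta^\eps(w)$ and the $w$--integration supply additional decay; alternatively one can invoke tightness of the conditioned measures inherited from the convergent $\DMSsph{2}$ scheme (Proposition~\ref{prop:2pt} via Proposition~\ref{prop:2ptlim}) to rule out precisely the mass--escape that $H^\eps$ was designed to exclude. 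Either way, this balancing of a growing exponential weight against a polynomially small probability is the delicate point, and the reason a fully short argument is unavailable.
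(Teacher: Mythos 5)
Your opening step is the same as the paper's: the rooting (Girsanov) identity that lets you write the field sampled from $\hat\P^\eps_\delta$, given $\hat w^\eps$, as the two-singularity field $h^\eps+\gamma G_{D^\eps}(\cdot,\hat w^\eps)$ with $\hat w^\eps$ independent of the zero-boundary part (this is exactly \eqref{eq:root} in the paper, via \cite[Section 3.3]{KPZ}). After that, however, your argument has two genuine gaps. First, the structure ``condition on $\hat w^\eps=w$ fixed, apply Theorem~\ref{thm:3pt} with marked points $0,w,\infty$, use M\"obius covariance to see the limit is $w$-independent, then integrate out $w$ by dominated convergence'' does not work, because under $\hat\P^\eps_\delta$ the point $\hat w^\eps$ is \emph{not} tight: the only available control is Lemma~\ref{lem:location}, $\log|\hat w^\eps|=o(|\log\eps|^{2/3})$, and in fact $\log|\hat w^\eps|$ fluctuates on scale $\sqrt{|\log\eps|}$ (this is visible from the inverse-Gaussian hitting-time computation in Lemma~\ref{lem:hitting}). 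So for every fixed compact set $K$, $\hat\P^\eps_\delta[\hat w^\eps\in K]\to 0$: pointwise-in-$w$ convergence addresses an event of vanishing probability, and no dominated-convergence argument over a fixed $w$-region can capture the limit. What is needed is convergence that is uniform over $|w|$ growing slowly to infinity. The paper obtains exactly this by performing the M\"obius map \emph{first}: the field becomes $\hat h^\eps$ of \eqref{eq:hath} on the random domain $\hat D^\eps=|\hat w^\eps|^{-1}D^\eps$ with an extra constant $Q\log|\hat w^\eps|$, which is then absorbed as a vanishing perturbation $q_\eps$ of the boundary constant in \eqref{eq:rescale}; the proof of Theorem~\ref{thm:3pt} is then re-run with $(2Q-3\gamma)$ replaced by $(2Q-3\gamma+b_\eps)$, $b_\eps\to0$ deterministic, and one checks the whole argument is stable under this perturbation. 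Your proposal never addresses this stability, since you use Theorem~\ref{thm:3pt} only as a black box at fixed marked points.

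Second, your treatment of $H^\eps$ is not a proof. The paper does \emph{not} drop $H^\eps$ from Theorem~\ref{thm:3pt} (it explicitly states that for $\gamma<\sqrt2$ it cannot do so by a short argument); instead it shows that in this rooted setting the corresponding event $\hat H^\eps$ has probability $1-o_{\eps,\delta}(1)$ under $\hat\P^\eps_\delta$ (equation \eqref{circ}, proved via Lemma~\ref{lem:cond}), so that conditioning on it can be \emph{added} for free, after which Theorem~\ref{thm:3pt} (with both conditionings and the $b_\eps$ perturbation) applies. That probability estimate is itself nontrivial: it goes through the cylindrical radial process $B_t-(Q-\gamma)t$, the location of its maximum, the hitting-time Lemma~\ref{lem:hitting}, and the tightness inherited from Proposition~\ref{prop:2pt} and \cite[Lemma~5.5, Proposition~5.7]{mating}. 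Your second suggested alternative (``invoke tightness from the DMS scheme'') points in this direction but is left entirely undeveloped, and your first alternative (extra decay from the intensity $\theta^\eps(w)$ in the $\Gamma_\eps$ estimate) is speculative --- there is no indication it closes the $\gamma<\sqrt2$ case, which is precisely the case the authors say they cannot handle by a direct tail estimate. So the delicate points you correctly flag are the ones your proposal leaves open.
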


Let us see how Theorem~\ref{thm:main'} now follows. 

\begin{proof}[Proof of Theorem~\ref{thm:main'}]
	Since $\mu_{h^\eps}(\C)\in [e^{-\gamma\delta},e^{\gamma\delta}]$ under $\P^\eps_\delta$, the total variational distance between  $\P^\eps_\delta$ and  $\hat \P^\eps_\delta$ is $o_\delta(1)$ and so is the total variational distance between $(w^\eps,h^\eps)$ and $(\hat w^\eps,h^\eps)$. Therefore the two limiting objects in Proposition~\ref{prop:2pt} and Proposition~\ref{prop:3pt} are the same, which implies Theorem~\ref{thm:main'}. 
\end{proof}

The rest of the section is devoted to proving Proposition~\ref{prop:3pt}.

\subsection{The location of the third point sampled from \texorpdfstring{$\DMSsph{2}$}{DMS}}\label{subsec:2pt}
We start by controlling the location of the sampled points $w^\eps$ and $\hat w^\eps$ defined above. 
\begin{lemma}\label{lem:location}
	Let  $w^\eps$ be defined in Proposition~\ref{prop:2pt}. We have 
	$$\lim_{\delta\to 0}\lim_{\eps\to 0} \log |w^\eps|/|\log\eps|^{2/3}  =0\quad \textrm{in law under}\; \P^\eps_\delta.$$
	The same statement is true for $\hat w^\eps$  defined in Proposition~\ref{prop:3pt}.
\end{lemma}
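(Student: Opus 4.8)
\emph{Reduction to a mass-fraction estimate.} Convergence in law of $\log|w^\eps|/|\log\eps|^{2/3}$ to $0$ amounts to showing that for every $\eta>0$,
\[
\lim_{\delta\to0}\limsup_{\eps\to0}\ \P^\eps_\delta\big[\,\big|\log|w^\eps|\big|>\eta|\log\eps|^{2/3}\,\big]=0.
\]
Since $w^\eps$ is sampled from the normalized area $\bar\mu_{h^\eps}$, this probability is exactly $\E^{\P^\eps_\delta}[\bar\mu_{h^\eps}(R^\eps)]$ with $R^\eps=\{z:|\log|z||>\eta|\log\eps|^{2/3}\}$. The plan is to bound this mass fraction uniformly in $\eps$ by a quantity decaying as $\eta|\log\eps|^{2/3}\to\infty$. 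For $\hat w^\eps$ the bound is then automatic: on $E^\eps_\delta$ the density $d\hat\P^\eps_\delta/d\P^\eps_\delta$ lies in $[e^{-2\gamma\delta},e^{2\gamma\delta}]$, so the two probabilities differ by at most this factor, which tends to $1$ as $\delta\to0$.

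\emph{Removing the conditioning via $\Q^\eps$.} As in Section~\ref{sec:3pt} I would pass to the tilted measure $\Q^\eps\propto e^{(2Q-2\gamma)A_\eps}\,d\P^\eps$, under which $\wt A_\eps:=A_\eps+(2Q-2\gamma)\log\eps$ is a centered Gaussian of variance $|\log\eps|$, independent of the shape field $h^\eps_L=h^\eps_{\mathfrak c}-(2Q-2\gamma)\log(|z|\vee1)-\gamma\log|z|$, and $h^\eps=h^\eps_L+\wt A_\eps+\wt A_\eps g_\eps$. Writing $\nu^\eps=\mu_{h^\eps_L+\wt A_\eps g_\eps}$ one has $\bar\mu_{h^\eps}=\bar\nu^\eps$, and repeating the measure change \eqref{eq:measure-change} that opens the proof of Theorem~\ref{thm:3pt} together with the Fubini/Gaussian integration of Lemma~\ref{lem:Gaussian2}, the conditional expectation reduces, up to a $1+O(\delta)$ factor and Gaussian corrections tending to $1$, to
\[
\E^{\P^\eps_\delta}[\bar\mu_{h^\eps}(R^\eps)]\ \approx\ \frac{\E^{\P^\eps_{\mathfrak c}}\big[\nu^\eps(R^\eps)\,\nu^\eps(\C)^{a'-1}\big]}{\E^{\P^\eps_{\mathfrak c}}\big[\nu^\eps(\C)^{a'}\big]},\qquad a'=\tfrac2\gamma(Q-\gamma).
\]
Intuitively the shape $\nu^\eps$ localizes at $|z|\asymp1$ because the pinning circle $\mathfrak c$ forces the radial maximum of $h^\eps_L$ near $|z|=1$, which is what makes $\log|w^\eps|$ of order $1$. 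The denominator is bounded below uniformly in $\eps$: restricting to $\{\nu^\eps(\D)\in[c_1,c_2]\}$, an event of probability bounded away from $0$ by Proposition~\ref{prop:whole-plane} and Corollary~\ref{cor:moments}, gives a positive lower bound.

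\emph{The key tail estimate.} It remains to show the numerator vanishes with $\eps$. I would split $R^\eps$ into dyadic annuli $A_n=e^n\D\setminus e^{n-1}\D$, both for $n>\eta|\log\eps|^{2/3}$ (mass escaping towards the $\infty$--insertion) and for $n<-\eta|\log\eps|^{2/3}$ (mass escaping into the $\gamma$--insertion at $0$). On each annulus the radial part of $h^\eps_L$ is, after the cylinder change of coordinates, a Brownian motion carrying a drift $-(Q-\gamma)|n|$ (the effective coefficients being $2Q-\gamma$ at $\infty$ and $\gamma$ at $0$); exactly as in the proof of Lemma~\ref{lem:continuity} this yields, for a suitable $q\in(0,\tfrac2\gamma(Q-\gamma))$, an exponential bound $\E^{\P^\eps_{\mathfrak c}}[\nu^\eps(A_n)^q]\le b_1e^{-b_2|n|}$ uniform in $\eps$. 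Feeding this into the numerator through Hölder's inequality against the weight $\nu^\eps(\C)^{a'-1}$, whose moments are governed by Corollary~\ref{cor:moments}, should produce a bound of the form $Ce^{-b\,\eta|\log\eps|^{2/3}}$, which vanishes as $\eps\to0$ and then trivially survives $\delta\to0$.

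\emph{Main obstacle.} The delicate point is precisely this last coupling: the natural weight $\nu^\eps(\C)^{a'}$ sits at the \emph{critical} exponent $a'=\tfrac2\gamma(Q-\gamma)$ for the $\gamma$--insertion (the boundary case of Corollary~\ref{cor:moments}), so no moment can be wasted when separating $\nu^\eps(R^\eps)$ from $\nu^\eps(\C)^{a'-1}$, and the annuli $A_n$ with $n<0$ lie \emph{inside} $\D$, so the decay towards $0$ must be read off the insertion's cusp rather than a clean product structure. I expect to resolve this with the same $\gamma\gtrless\sqrt2$ dichotomy as in Lemma~\ref{lem:Gaussian2}: when $a'\le1$ (i.e.\ $\gamma\ge\sqrt2$) the factor $\nu^\eps(\C)^{a'-1}$ is a negative power of the total mass, controlled by the finite negative moments of Corollary~\ref{cor:moments}; when $a'>1$ one applies Hölder with an exponent $p>1$ close enough to $1$ that $q=p$ still lies below $\tfrac2\gamma(Q-\gamma)$, preserving the annular decay while keeping the complementary moment of $\nu^\eps(\C)$ finite. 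The exponent $2/3$ itself is inessential: any rate tending to infinity strictly slower than $|\log\eps|$ would do, which is also why this estimate dovetails with the event $H^\eps$ of Section~\ref{sec:3pt}.
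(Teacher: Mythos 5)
Your opening reductions are fine (the mass-fraction reformulation, and the transfer between $\P^\eps_\delta$ and $\hat\P^\eps_\delta$ by the bounded density), but the core of the argument --- importing the Section~\ref{sec:3pt} machinery for the two-point field --- collapses at exactly the point you flag as the ``main obstacle'', and no H\"older manipulation can repair it. With $\gamma$-insertions only at $0$ and $\infty$, the weight exponent $a'=\tfrac{2}{\gamma}(Q-\gamma)$ is not merely the boundary case of Corollary~\ref{cor:moments}: the moment $\E^{\P_{\mathfrak c}}\big[\mu_{h_L}(\C)^{a'}\big]$ is genuinely infinite, because the mass near a $\gamma$-insertion has a power-law tail with exponent exactly $a'$, and this divergence holds for every fixed $\eps$ as well (it is driven by the insertion at $0$, inside the domain). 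So both the numerator and the denominator of your reduction formula are $+\infty$; equivalently, condition \eqref{eq:new-Seiberg} fails for two $\gamma$-points, which is precisely why $\DMSsph{2}$ is not a DKRV-type reweighted measure and why the paper never attempts such a representation. Keeping the indicator $\1_{E^\eps_\delta}$ does not save the scheme either: the analogue of Lemma~\ref{lem:Gaussian2} would produce, via the Fubini identity, the quantity $2\delta\,\E^{\P_{\mathfrak c}}[\,\cdot\,\mu_{h_L}(\C)^{a'}]=\infty$, so the renormalized conditional expectations diverge instead of converging to a finite ratio. Your H\"older dichotomy fails arithmetically in both regimes: for $\gamma<\sqrt2$, annular decay of $\E[\nu^\eps(A_n)^p]$ requires $p<a'$, while finiteness of $\E\big[\nu^\eps(\C)^{(a'-1)p'}\big]$ with $p'=p/(p-1)$ requires $(a'-1)\tfrac{p}{p-1}<a'$, i.e.\ $p>a'$ --- incompatible, which is just the criticality restated; for $\gamma\ge\sqrt2$ there is no exponent $p\ge1$ below the threshold $a'\le1$ at all (for $\gamma>\sqrt2$ the first moments $\E[\nu^\eps(A_{-n})]$ even grow exponentially as the annuli approach the insertion).

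There is also a structural reason no estimate of the kind you seek can exist: it would prove too much. A uniform-in-$\eps$ exponential bound on $\E^{\P^\eps_\delta}[\bar\mu_{h^\eps}(A_n)]$ would make $\log|w^\eps|$ tight, and it is not. Under $\P^\eps_\delta$ the mass concentrates near the radial maximum of the field in cylinder coordinates, and the location of that maximum is, conditionally, an inverse-Gaussian hitting time with mean $O(1)$ but variance of order $|\log\eps|$; hence $\log|w^\eps|$ has genuine fluctuations of order $|\log\eps|^{1/2}$. This falsifies both your localization heuristic (the pinning circle localizes the \emph{unconditioned} shape field near $|z|\asymp 1$, but after conditioning on $E^\eps_\delta$ the criticality makes configurations carrying the mass at scale $e^{-\tau}$, $\tau\lesssim |\log\eps|^{1/2}$, contribute comparably) and your closing claim that any rate tending to infinity slower than $|\log\eps|$ would do: any rate $o(|\log\eps|^{1/2})$ makes the statement false, and the admissible rates are exactly those dominating $|\log\eps|^{1/2}$. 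The paper's proof sidesteps two-dimensional moment analysis entirely: it passes to the cylinder, uses that the sampled point sits at tight distance from the radial maximum (this is what the convergence to $\DMSsph{2}$ in the maxima embedding, Propositions~\ref{prop:2ptlim} and~\ref{prop:2pt}, provides), reduces to the maximum's location via the conditioning event $F^\eps_C$ and the estimates of \cite{mating}, and concludes by the concentration of the inverse-Gaussian hitting time (Lemma~\ref{lem:hitting}). Any correct proof must engage with this one-dimensional excursion structure rather than with weighted GMC moments.
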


\begin{proof}[Proof of Lemma~\ref{lem:location}]
	As the total variational distance between $(w^\eps,h^\eps)$ and $(\hat w^\eps,h^\eps)$ is $o_\delta(1)$, it suffices to prove the result for $w^\eps$.
	
	We use the map $z \mapsto e^{-z}$ to pull $(h^\eps,\C)$ back to $\cQ=\R\times[0,2\pi]$. Then $D^\eps$ is mapped to the half cylinder $(\log\eps,+\infty)\times[0,2\pi]$ and $0,\infty,w^\eps$ are mapped to  $+\infty,-\infty,-\log w^\eps$ respectively. The radial component of $h^\eps$ can be written as $X_t=B_t-(Q-\gamma)t$ where $B_t$ evolves as a standard Brownian motion with $B_{\log\eps}=2(Q-\gamma)\log\eps$. 
	
	Let $L^\eps$ be the location where $B_t-(Q-\gamma)t$ achieves the maxima. By the proof of Proposition~\ref{prop:2pt}, $L^\eps+\log |w^\eps|$ is tight, in particular
	\begin{equation}\label{eq:maxima}
	\lim_{\delta\to 0}\lim_{\eps\to 0}(\log|w^\eps|+L^\eps)/|\log\eps|^{2/3}=0\quad 	 \textrm{in law.}
	\end{equation}
	Thus we only need to show that
	\begin{equation}\label{eq:disp2}
	\lim_{\eps\to 0} L^\eps/|\log\eps|^{2/3}=0\quad 	 \textrm{in law}.
	\end{equation}
	In fact it suffices to show this under a certain conditioning. Indeed, let $F^\eps_C$ be the event that the maxima of $B_t-(Q-\gamma)t$ is bigger than $-C$. Then by the argument in \cite[Lemma~5.5 and Proposition~5.7]{mating},
	\begin{align}\label{eq:tight}
	&\P[F^\eps_C| E^\eps_\delta] \to 1 &&\textrm{as}\; C\to \infty\; \textrm{uniformly in}\; \eps\; \textrm{for fixed $\delta$ and}\\
	&\P[E^\eps_\delta|F^\eps_C ] >0  &&\textrm{uniformly in}\;  \eps\; \textrm{for}\; \textrm{fixed}\; \delta,C. \nonumber
	\end{align}
	Therefore we only need to show that \eqref{eq:disp2} holds for fixed $\delta,C$, conditioning on $F_C^\eps$.
	
	To prove this, let $T^\eps=\inf\{t\ge \log\eps: B_t-(Q-\gamma) t \ge -C \}$.  By the Markov property of Brownian motion, conditioning on $F^\eps_C$, $ L^\eps-T^\eps$ does not depend on $\eps, C, \delta$, and is a distribution associated with standard drifted Brownian motion. Thus it remains to show that for fixed $C$, conditioning on $F_C^\eps$,	
	\begin{equation*}
	\lim_{\eps\to 0} T^\eps/|\log\eps|^{2/3}=0\quad 	 \textrm{in law,}
	\end{equation*}
	which follows from the classical Lemma~\ref{lem:hitting} below by setting $a=Q-\gamma$ and properly translating the picture.
\end{proof}

\begin{lemma}
	\label{lem:hitting}
	Consider the drifted Brownian motion $B_t-at$ where $B_t$ is a standard Brownian motion with $B_t=0$ and $a>0$. Let $T_A=\inf\{t\in[0,\infty[:B_t-at=A\}$ with $A>0$. Then conditional on $T_A<\infty$, 
	\begin{equation}
	\lim_{A\to \infty} \frac{T_A-a^{-1}A}{A^{2/3}}=0 \quad\textrm{in law.}
	\end{equation}
\end{lemma}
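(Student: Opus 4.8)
The plan is to identify the conditional law of $T_A$ exactly---it is an inverse Gaussian distribution---and then conclude by a second-moment (Chebyshev) estimate. Write $X_t = B_t - at$, a Brownian motion with the negative drift $-a$. Since the drift is negative, $X_t \to -\infty$ almost surely, so $\{T_A < \infty\}$ is a genuinely rare event, of probability $e^{-2aA}$. The classical input I would invoke is the explicit first-passage density of a drifted Brownian motion: for drift $\mu$ and barrier $A>0$ (started at $0$),
\[
f_\mu(t) = \frac{A}{\sqrt{2\pi t^3}}\,\exp\!\left(-\frac{(A-\mu t)^2}{2t}\right), \qquad t>0 .
\]
First I would specialize this to $\mu=-a$, and renormalize by $\P(T_A<\infty)=e^{-2aA}$ to obtain the conditional density of $T_A$ on $\{T_A<\infty\}$.

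The key observation is that conditioning flips the sign of the drift. Completing the square in the exponent gives $2aA-(A+at)^2/(2t) = -(A-at)^2/(2t)$, so the conditional density of $T_A$ given $\{T_A<\infty\}$ equals $f_{+a}$, i.e. the honest (total mass one) first-passage law of a Brownian motion with the \emph{positive} drift $+a$, which reaches $A$ almost surely. Equivalently, one can see this via a Girsanov change of measure. This is precisely the inverse Gaussian law $\mathrm{IG}(A/a,\,A^2)$, with mean $A/a$ and shape $A^2$.

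From this explicit law the first two moments are immediate: $\E[T_A\mid T_A<\infty]=A/a$ and $\Var[T_A\mid T_A<\infty]=A/a^3$. Hence, setting $Z_A:=(T_A-a^{-1}A)/A^{2/3}$, one has $\E[Z_A]=0$ and
\[
\Var[Z_A]=\frac{A/a^3}{A^{4/3}}=a^{-3}A^{-1/3}\xrightarrow[A\to\infty]{}0 .
\]
By Chebyshev's inequality $Z_A\to 0$ in $L^2$, hence in probability and in law, which is exactly the assertion of the lemma (after translating back to the conditioning $T_A<\infty$).

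There is no serious obstacle here: the only point requiring care is the correct identification of the conditional law, which follows either from the square-completion above or from the sign-flipping Girsanov transform. The chosen exponent $2/3$ is comfortably larger than $1/2$, the true order of the fluctuations of $T_A$ around its mean $a^{-1}A$, so even a crude upper bound on $\Var[T_A]$ would suffice; one could alternatively exploit that $(T_A)_{A\ge 0}$ is an inverse Gaussian subordinator with stationary independent increments and appeal to a law-of-large-numbers argument, but the direct moment computation is the shortest route.
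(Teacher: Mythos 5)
Your proof is correct and takes essentially the same route as the paper: both identify the conditional law of $T_A$ given $\{T_A<\infty\}$ as the inverse Gaussian hitting-time law of a Brownian motion with the reversed drift $+a$ (the paper cites this drift-reversal fact from \cite[Lemma 3.6]{mating}, while you derive it directly by completing the square in the first-passage density), and then conclude by a Chebyshev/Markov bound since the variance is of order $A$, negligible against the normalization $A^{4/3}$. Your variance $A/a^3$ is in fact the correct value, where the paper's stated $\Var[T_A]=aA$ contains a harmless typo; in either case the order in $A$ is the same and the argument goes through.
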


\begin{proof}
	Recall \cite[Lemma 3.6]{mating}, conditioning on $T_A<\infty$, $B_t-at$ can be sampled as follows:
	\begin{enumerate}
		\item Sample a standard Brownian motion $X^1$  with linear drift $a$ until it hits $A$ and let $T_A$ be the first time that $X^1$ hits $A$.
		\item Sample  standard Brownian motion $X^2$ with linear drift $-a$.
		\item Concatenate $X^1[0,T_A] $ with $X^2(\cdot-T_A)$.
	\end{enumerate}
	It is well known (e.g. in \cite{krtshr}, also used in \cite{KPZ}) that the law of $T_A$ is the inverse Gaussian distribution with parameters $(a^{-1}A,a^{-2}A^2)$. Since $E[T_A]=a^{-1}A$ and $\Var[T_A]= aA$, Lemma~\ref{lem:hitting} follows from Markov inequality.
\end{proof}

\subsection{Rooted measure and \texorpdfstring{$\DKRVsph{\gamma,\gamma,\gamma}$}{DKRV}}\label{subsec:3pt}
In this section we prove Proposition~\ref{prop:3pt}. We will argue that the setting obtained by sampling a third point and mapping it to $1$ is close to that of Theorem~\ref{thm:3pt} with $z_1=0,z_2=1$, and that thus we can apply the arguments of Section 3 to obtain convergence to $\DKRVsph{\gamma,\gamma,\gamma}$.

Recall that $(\hat w^\eps,h^\eps)$ under $\hat\P^\delta_\eps$ can be sampled from the following probability measure:
\[c e^{\gamma h^{\eps}(z)}1_{E^\eps_\delta}dh^\eps dz\]
where $dh^\eps$ is the law of $h^\eps$ as in  \eqref{eq:h2pt} and $c$ is a normalizing constant.
We can perform this sampling in two steps:
\begin{enumerate}
	\item sample $(\hat w^\eps,h^\eps)$ from $c e^{\gamma h^{\eps}(z)}dh^\eps dz$, which is the so-called rooted measure of $h^\eps$;
	\item condition on the event $E^\eps_\delta$.
\end{enumerate}
Under the rooted measure $c e^{\gamma h^{\eps}(z)}dh^\eps dz$, $h^\eps$ can be written as (\cite[Section 3.3]{KPZ})
\begin{equation}\label{eq:root}
h^\eps= \tilde h^\eps_0+(2Q-\gamma)\log\eps+\gamma G_{D^\eps}(\cdot, \hat w^\eps) +\gamma G_{D^\eps}(\cdot,0)
\end{equation}
where $\hat w^\eps$ is sampled from its marginal law under the rooted measure
\footnote{
	As explained in \cite{KPZ}, the marginal law of $\hat w^\eps$ has density $\propto \int e^{\gamma h^\eps(z)}dh^\eps$ where the integration is over the law of $h^\eps$.
} 
and $\tilde h^\eps_0$ is an zero boundary GFF on $D^\eps$ that is independent of $\hat w^\eps$.

As stated in Proposition~\ref{prop:3pt}, we apply the Mobius transform $z\mapsto (\hat w^\eps)^{-1} z$ to pull $(0,\hat w^\eps,\infty)$ back to $(0,1,\infty)$. Let $\hat D^\eps=|\hat w^\eps|^{-1}D^\eps$, then the resulting field after coordinate change is  
\begin{equation}\label{eq:hath}
\hat h^\eps= \hat h^\eps_0+(2Q-\gamma)\log\eps+\gamma G_{\hat D^\eps}(\cdot, 1) +\gamma G_{\hat D^\eps}(\cdot,0) +Q\log|\hat w^\eps|
\end{equation}
where $\hat h^\eps_0$ denotes the zero boundary GFF on $\hat D^\eps$. Moreover, the event $E^\eps_\delta$ becomes $\mu_{\hat h^\eps}(\C)\in [e^{-\gamma\delta},e^{\gamma\delta}]$, and thus by conditioning $\hat h_\eps$ on the event $E^\eps_\delta$, we obtain the field in the statement in Proposition~\ref{prop:3pt}. 

In other words, if we first sample $(\hat w^\eps,h^\eps)$ according to $\hat\P^\eps_\delta$, then map $(0,\hat w^\eps,\infty)$ to $(0,1,\infty)$, then the resulting field after coordinate change of $h^\eps$ is exactly $\hat h^\eps$ on $\hat D^\eps$ as in \eqref{eq:hath} conditioning on $\mu_{\hat h^\eps}(\C)\in [e^{-\gamma\delta},e^{\gamma\delta}]$. 

\begin{proof}[Proof of Proposition~\ref{prop:3pt}]
	
	By Lemma~\ref{lem:location}, $\log |\hat w^\eps|=o_\eps(|\log\eps|^{\frac{2}{3}})$ with probability $1-o_{\eps, \delta}(1)$ under $\hat \P^\eps_\delta$. Here and henceforth we use the notation $o_{\eps, \delta}(1)$ to denote a quantity such that $\lim_{\delta\to 0}\lim_{\eps\to 0}o_{\eps,\delta}(1)=0$.
	
	Let $|\hat D^\eps|$ be the radius of $\hat D^\eps$. By Lemma~\ref{lem:location}, we can write 
	\begin{equation}
	\label{eq:rescale}
	\hat h^\eps =\hat h^\eps_0-(2Q-\gamma+q_\eps)\log|\hat D^\eps|  +\gamma G_{\hat D^\eps}(\cdot, 1) +\gamma G_{\hat D^\eps}(\cdot,0)
	\end{equation}
	where under $\hat \P^\eps_\delta$,
	\begin{equation}\label{eq:highprob}
	|\hat D^\eps|\to \infty\;\textrm{and}\; \lim_{\delta\to 0} \lim_{\eps\to 0} \frac{q_\eps}{|\log\eps|^{-1/3}}=0 \quad \textrm{in law}.
	\end{equation}
	Now let $\hat A_\eps$ be the circle average of $\hat h^\eps_0$ along $\partial \D$ and
	\[\hat H^\eps = \{\hat A_\eps +  (2Q-3\gamma+q_\eps) \log \eps \geq -|\log \eps|^{2/3}\}.\] 
	We claim that \begin{equation}\label{circ}
	\hat \P^\eps_\delta[\hat H^\eps ] = 1 - o_{\eps,\delta}(1).
	\end{equation}
	We will prove \eqref{circ} as a corollary of Lemma~\ref{lem:cond} below. Let us now see how it implies Proposition~\ref{prop:3pt}. Given \eqref{circ}, to study the limiting measure in Proposition~\ref{prop:3pt} as $\eps\to 0$ and then $\delta\to 0$, it suffices to study the limiting measure obtained by the following procedure:
	\begin{enumerate}
		\item[Step 1] sample a pair of random variables $(\hat D^\eps,q_\eps)$ such that as $\eps\to 0,\delta\to 0$, it holds that $|\hat D^\eps|\to \infty$ and $ q_\eps = o\left(\big|\log |\hat D^\eps|\big|^{-1/3}\right)$ in law. (Notice that in particular the pair  obtained from the marginal law on $\hat w^\eps$ as above satisfies these conditions);
		\item[Step 2] given $(\hat D^\eps,q_\eps)$, sample a zero boundary GFF on $\hat D^\eps $ (denoted by $\hat h^\eps_0$) and construct $\hat h^\eps$ by \eqref{eq:rescale}. Condition on both  $\mu_{\hat h^\eps(\C)}\in [e^{-\gamma \delta},e^{\gamma \delta}]$ and $\hat H^\eps$ occuring.
	\end{enumerate}
	In fact, although the measure in Proposition~\ref{prop:3pt} before taking the limit is not exactly as the one sampled from the above two steps, by \eqref{eq:highprob} and \eqref{circ}, the limits of these two procedures coincide in law.
	
	Now we are ready to use the proof of Theorem~\ref{thm:3pt} for the case $z_1=0,z_2=1$. First, notice that the sampling of the Step 2 is independent of Step 1 given $\hat D^\eps$. Thus to obtain the limiting law in Proposition~\ref{prop:3pt} we can simply assume that $q_\eps$ is a deterministic vanishing sequence and $\hat D^\eps$ is a sequence of deterministic growing disks. Now suppose that we replace $(2Q-\gamma)$ by $(2Q-\gamma+b_\eps)$ in \eqref{def:h} where $b_\eps$ is a deterministic sequence converging to 0. Then one can step-by-step examine that in Section~\ref{sec:3pt}, if whenever encountering $(2Q-3\gamma)$ we replace it by $(2Q-3\gamma+b_\eps)$, the same arguments still work. In particular, the conclusion of Theorem~\ref{thm:3pt} still holds, thus Proposition~\ref{prop:3pt} follows.\qedhere
\end{proof}

\bigskip

To finish off, we prove the claim \eqref{circ}. By examining \eqref{eq:hath}, \eqref{eq:rescale} along with \eqref{eq:highprob}, we see that with probability $1-o_{\eps,\delta}(1)$ the difference between $\hat A_\eps+(2Q-3\gamma)\log\eps$ and the circle average of $\hat h^\eps$ along $\partial \D$ is $o(|\log\eps|^{2/3})$. Moreover, the circle average of $\hat h^\eps$ along $\partial \D$ and the circle average of $ h^\eps$ along $|\hat w^\eps|\partial \D$ differ by $Q\log|\hat w^\eps|=o(|\log\eps|^{2/3})$ again with probability $1-o_{\eps,\delta}(1)$. Finally, since the total variational distance of $\hat \P^\eps_\delta$ and $\P^\eps_\delta$ is $o_\delta(1)$, we have that $\hat \P^\eps_\delta[\hat H^\eps]\ge \P^\eps_\delta[\wt H^\eps]-o_{\eps,\delta}(1)$, where $\wt H^\eps$ is the event that the circle average of $ h^\eps$ along $|w^\eps|\partial \D$ is larger than $-|\log\eps|^{2/3}$. Thus \eqref{circ} follows from:
\begin{lemma}\label{lem:cond}
	$\lim_{\delta\to 0}\lim_{\eps\to 0}\P^\eps_\delta[\wt H^\eps ] = 1$.
\end{lemma}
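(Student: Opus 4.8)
The goal is to show that, conditioned on the total mass event $E^\eps_\delta$, the circle average of $h^\eps$ along the circle $|w^\eps|\partial\D$ is with high probability at least $-|\log\eps|^{2/3}$, in the iterated limit $\eps\to 0$ then $\delta\to 0$. My plan is to transport everything to the cylinder $\cQ$ via $z\mapsto e^{-z}$, exactly as in the proof of Lemma~\ref{lem:location}, so that the radial part of $h^\eps$ becomes the drifted Brownian motion $X_t=B_t-(Q-\gamma)t$ started at $B_{\log\eps}=2(Q-\gamma)\log\eps$, and the circle $|w^\eps|\partial\D$ corresponds to the vertical line at $\Re z=-\log|w^\eps|$. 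In these coordinates the quantity $\wt H^\eps$ controls is, up to the bounded angular contribution and the $o(|\log\eps|^{2/3})$ error from the singularities, the value $X_{-\log|w^\eps|}$ of the radial process at the location of the sampled third point.

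First I would recall from the proof of Lemma~\ref{lem:location} that $-\log|w^\eps|$ is, up to an $o(|\log\eps|^{2/3})$ perturbation, equal to the location $L^\eps$ of the maximum of $X_t$, and that conditioned on $E^\eps_\delta$ the maximum of $X_t$ is tight from below (the event $F^\eps_C$ of Lemma~\ref{lem:location}, on which the max exceeds $-C$, has conditional probability tending to $1$ as $C\to\infty$, uniformly in $\eps$). The key point is that the radial process evaluated at its own near-maximum is automatically large: at the exact maximizer $L^\eps$ the value is $\max_t X_t\ge -C$ with high probability, so the only thing to control is the fluctuation of $X$ between $L^\eps$ and the actual point $-\log|w^\eps|$. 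Since $|{-}\log|w^\eps|-L^\eps|=o(|\log\eps|^{2/3})$ in law by \eqref{eq:maxima}, and over a displacement of order $o(|\log\eps|^{2/3})$ a Brownian path fluctuates by $O(|\log\eps|^{1/3}\cdot\text{polylog})=o(|\log\eps|^{2/3})$ with high probability, the value $X_{-\log|w^\eps|}$ differs from $\max_t X_t$ by $o(|\log\eps|^{2/3})$. Combining, $X_{-\log|w^\eps|}\ge -C-o(|\log\eps|^{2/3})\ge -|\log\eps|^{2/3}$ with probability $1-o_{\eps,\delta}(1)$.

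Concretely, the steps in order would be: (i) rewrite $\wt H^\eps$ on $\cQ$ and identify the circle average of $h^\eps$ along $|w^\eps|\partial\D$ with $X_{-\log|w^\eps|}$ plus harmonic/angular corrections that are bounded or $o(|\log\eps|^{2/3})$ with high probability, using $\log|w^\eps|=o(|\log\eps|^{2/3})$ from Lemma~\ref{lem:location} to control the singular terms $\gamma G_{D^\eps}(\cdot,0)$ and $\gamma G_{D^\eps}(\cdot,w^\eps)$ near the evaluation circle; (ii) condition on $F^\eps_C$ so that $\max_t X_t\ge -C$, invoking \eqref{eq:tight} to pass between conditioning on $E^\eps_\delta$ and on $F^\eps_C$; (iii) bound the gap $\max_t X_t - X_{-\log|w^\eps|}$ via the modulus of continuity of Brownian motion over the $o(|\log\eps|^{2/3})$-length interval separating $-\log|w^\eps|$ from the maximizer, concluding that it is $o(|\log\eps|^{2/3})$ with probability $1-o_{\eps,\delta}(1)$; (iv) take $\eps\to 0$, then $\delta\to 0$, then $C\to\infty$ to absorb the constant.

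The main obstacle I anticipate is step (iii): making rigorous that the radial process does not dip below $-|\log\eps|^{2/3}$ at the \emph{random} point $-\log|w^\eps|$, since $w^\eps$ is itself sampled from the quantum measure and is therefore correlated with the field. The clean way around this is to use that $-\log|w^\eps|$ lies within $o(|\log\eps|^{2/3})$ of the \emph{deterministic functional} $L^\eps$ (the argmax), and that the maximum value is tight; this decouples the size of the fluctuation from the precise random location and reduces the problem to a Brownian modulus-of-continuity estimate together with the tightness already established in Lemma~\ref{lem:location}. I expect the remaining estimates to be routine Gaussian tail bounds.
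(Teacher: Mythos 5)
Your proposal is correct in substance and shares the paper's skeleton --- passing to cylinder coordinates, reducing $\wt H^\eps$ to a statement about the radial process at $-\log|w^\eps|$, and using \eqref{eq:tight} to transfer between conditioning on $E^\eps_\delta$ and on $F^\eps_C$ --- but your final step genuinely differs from the paper's. The paper lifts one more fact from the proof of Proposition~\ref{prop:2pt}: the \emph{value} gap $X^\eps_{-\log|w^\eps|}-X^\eps_{L^\eps}$ is a tight sequence under $\P^\eps_\delta$ (in the maxima embedding both the field and the sampled point converge, so the radial value at the sampled point relative to the maximum converges in law), and this single tightness statement finishes the proof immediately. You instead use only the \emph{location} gap from \eqref{eq:maxima} and convert location proximity into value proximity via a modulus-of-continuity estimate. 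Note that both inputs are extracted from Proposition~\ref{prop:2pt}, so you are not avoiding that machinery, only using a weaker consequence of it; the price is two technical points your sketch glosses over. First, the regularity estimate must be proved under the conditional law, not the unconditional one --- unconditional high-probability bounds do not transfer, since $\P[E^\eps_\delta]$ may vanish as $\eps\to 0$. This is fixable through your step (ii): conditioned on $F^\eps_C$, the radial process after the hitting time $T^\eps$ of level $-C$ is an unconditioned drifted Brownian motion, while before $T^\eps$ it is, by an $h$-transform computation, a Brownian motion with the drift reversed to $+(Q-\gamma)$; both pieces have Brownian regularity, and the window around $L^\eps$ may straddle $T^\eps$, so both pieces are needed. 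Second, your fluctuation bound $O(|\log\eps|^{1/3}\cdot\mathrm{polylog})$ accounts only for the Brownian part of $X_t=B_t-(Q-\gamma)t$: over a window of length $\eta|\log\eps|^{2/3}$ the drift contributes $(Q-\gamma)\eta|\log\eps|^{2/3}$, which is of the \emph{same} order as the target threshold, not smaller. The argument survives because \eqref{eq:maxima} lets you take $\eta$ arbitrarily small (so the total deficit stays below, say, $|\log\eps|^{2/3}/4$ once $\eta<1/\bigl(4(Q-\gamma)\bigr)$), but as written the claimed bound is false for $X$ and must be restated with the drift term included. With these two repairs your route closes; it is somewhat longer than the paper's, whose payoff is that one tightness claim replaces all of the path-regularity work.
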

\begin{proof}
	Recall the setting of the proof of Lemma~\ref{lem:location}. In the cylindrical coordinates, the radial component of $h^\eps$ can be written as $X^\eps_t=B_t-(Q-\gamma)t$ such that $B_t$ evolves as a standard Brownian motion and $B_{\log\eps}=2(Q-\gamma) \log\eps$. By the coordinate change formula and Lemma~\ref{lem:location},
	\begin{align}\label{eq:Xt}
	\P^\eps_\delta[\wt H^\eps]\ge \P^\eps_\delta\left[X^\eps_{-\log|w^\eps|}\ge - |\log\eps|^{2/3}/2\right]-o_{\eps,\delta}(1).
	\end{align}
	Let $L^\eps$ be the location where $X^\eps_t$ achieves its maxima and $F^\eps$ be the event that $ X^\eps_{L^\eps}\ge - |\log \eps|^{2/3}/4$. Then \eqref{eq:tight} in Lemma~\ref{lem:location} implies that:
	\begin{equation}\label{maxi}
	\P[F^\eps | E^\eps_\delta] = 1 - o_\eps(1)\quad \textrm{for fixed $\delta$}.
	\end{equation}
	But now the proof of Proposition~\ref{prop:2pt} implies that  $X^\eps_{\log|w^{\eps}|}- X^\eps_{L^\eps}$ is a tight sequence of random variables. Hence \eqref{eq:Xt} and \eqref{maxi} together yield Lemma~\ref{lem:cond}.
\end{proof}

\end{document}